\newcommand{\Mult}{{\mathcal M}} 
\newcommand{\compose}{\circ}
\renewcommand{\H}{\ensuremath{{\mathcal H}}} 
\newcommand{\C}{\mathbb C}
\newcommand{\N}{\mathbb N}
\newcommand{\M}[1]{M_{#1}(\C)} 
\newcommand{\BH}{\ensuremath{B(\H)}} 
\newcommand{\tensor}{\otimes} 
\newcommand{\mintensor}{\otimes_{min}} 
\newcommand{\HtH}{\ensuremath{\H\tensor\H}}
\newcommand{\BHH}{\ensuremath{B(\H\tensor\H)}} 
\newcommand{\isom}{\cong}
\newcommand{\Ah}{\ensuremath{\widehat{A}} } 
\newcommand{\Bh}{\ensuremath{\widehat{B}} } 
\newcommand{\fancyK}{\ensuremath{\widetilde{K}}} 
\renewcommand{\fancyK}{\ensuremath{{K}}} 
\newcommand{\antipode}{\ensuremath{\kappa}} 
\renewcommand{\antipode}{\ensuremath{S}} 
\newcommand{\antipodeh}{\ensuremath{\widehat\kappa}} 
\renewcommand{\antipodeh}{\ensuremath{\widehat{S}}} 
\newcommand{\counit}{\epsilon} 
\newcommand{\haar}{\tau}
\newcommand{\haarh}{\widehat{\varphi}}
\renewcommand{\haar}{\varphi}
\renewcommand{\haarh}{{\varphi}}
\newcommand{\flip}{\sigma}
\newcommand{\F}{\mbox{\ensuremath{\mathcal F}}} 
\newcommand{\inv}{\ensuremath{{}^{-1}}}
\newcommand{\Id}{\mbox{\rm Id}}
\newcommand{\arrow}{\rightarrow}
\newcommand{\coproduct}[1][\empty]{\ensuremath{\delta_#1}}
\renewcommand{\coproduct}[1][\empty]{\ensuremath{\Delta_#1}}
\newcommand{\convolution}{\diamond}   \newcommand{\convolve}{\convolution}
\newcommand{\boxproduct}{\,\square\,} 
\newcommand{\comment}[1]{\-\marginpar[\raggedleft\footnotesize\it\textcolor{Sienna4}{#1\smallskip}]{\raggedright\footnotesize\it\textcolor{Sienna4}{#1\smallskip}}}
\renewcommand{\comment}[1]{}
\newcommand{\compact}{\ensuremath{\mathcal K}}
\newcommand{\adjointable}{\ensuremath{\mathcal L}}
\newcommand{\ip}[2]{\ensuremath{\left\langle #1,#2\right\rangle}}
\newcommand{\Cu}{\ensuremath{{\mathcal C}\hspace{-.8pt}u}}
\renewcommand{\fancyK}{\ensuremath{{K}}} 
\newcommand{\Add}{\ensuremath{A^{**}}}
\newcommand{\Bdd}{\ensuremath{B^{**}}}
\newcommand{\leqCu}{\preceq_{\scriptscriptstyle {\mathcal C}\hspace{-.75pt}u}}
\newcommand{\geqCu}{\succeq_{\scriptscriptstyle {\mathcal C}\hspace{-.75pt}u}}
\newcommand{\blackadar}{\sim_{s}} 
\newcommand{\HA}{{\mathcal H_A}}
 \newtheorem{Theorem}{Theorem}
 \newtheorem{Lemma}{Lemma}
 \newtheorem{Corollary}{Corollary}
 \newtheorem{Proposition}{Proposition}
 \newtheorem{Example}{Example}
 \newtheorem{Remark}{Remark}
 \newtheorem{Definition}{Definition}
\begin{document}

\centerline{\Large Cuntz Semigroups of Compact-Type Hopf~C*-Algebras}
\centerline{\large Dan  Ku\v{c}erovsk\'{y} 
} 


\let\thefootnote\relax
\footnote{Address: Department of Mathematics and Statistics, University of New Brunswick, Fredericton NB E3B 5A3, Canada  \qquad dkucerov@unb.ca}

\textbf{Abstract:}{ The classical Cuntz semigroup has an important role in the study of C*-algebras, being one of the main invariants used to classify recalcitrant C*-algebras up to isomorphism. We consider C*-algebras that have Hopf algebra structure, and find additional structure in their Cuntz semigroups.
We show how to use results from the C*-algebraic classification program to show that in  many cases, isomorphisms of Cuntz semigroups that respect this additional structure can be lifted to Hopf algebra (bi)isomorphisms, up to a possible flip of the co-product. This shows that the Cuntz semigroup provides an interesting invariant of C*-algebraic quantum groups.
}

\footnote{Keywords: Cuntz semigroup; Hopf algebras; equivariant Cuntz semigroup; C*-algebraic quantum groups; multiplicative unitaries. AMS Primary 47L80, 16T05; Secondary 47L50, 16T20.\\
This paper was partially supported by NSERC. This paper was partially supported by the grant H2020-MSCA-RISE-2015-691246-QUANTUM DYNAMICS and 3542/H2020/2016/2.}

\section{Introduction}

 In this paper, we address two problems, the second depending on the first. The first problem arises as follows:   in the algebraic setting, there is a product operation on modules over a Hopf algebra, as is discussed further at the end of Section \ref{Section2}, or in e.g., \cite{kuce.classify.Hopf.by.Ktheory0}. However, in the C*-algebraic quantum group setting, there are  reasons, coming from the classification program for C*-algebras, motivating us to seek a Hilbert module version of this construction. It is desirable to allow some equivalence relations on the set of  Hilbert modules, and naturally the equivalence relation should be compatible with the product operation.  Drawing a parallel with the C*-algebraic Cuntz semigroup motivates the use of a~coarser equivalence relation on Hilbert modules than the isomorphism relation that we first consider. These subtleties, as well as a desire to take advantage of known results about Cuntz semigroups, lead us to study the implied equivalence relation(s) on generators of Hilbert modules.   We obtain an invariant for C*-algebraic quantum groups,  generalizing the Cuntz semigroup for C*-algebras, that we term the \textit{Cuntz semiring}. For C*-algebras, it is often true that if the algebras belong to the same class in a sense that we discuss later, then isomorphism at the level of Cuntz semigroups implies isomorphism at the level of C*-algebras. The second problem we study is to decide when a similar statement holds at the level of Cuntz semirings and C*-algebraic quantum groups. Ultimately, and perhaps inevitably, we will have to restrict our study to the Kac case in order to obtain results on the second problem, namely obtaining  Hopf algebra isomorphism from Cuntz semigroup isomorphisms. This is because Cuntz semirings seem to have good stability properties under deformation, but in general, without restrictive conditions, C*-algebraic quantum groups do not. 

The Elliott program \cite{classification.overview}  is to classify C*-algebras using K-theoretical invariants. Recently, in~this program, a popular classifying functor on appropriate classes of C*-algebras has been the Cuntz semigroup.
 The partial solution to the  second of the above problems thus fits into the framework of the Elliott program, generalized to the setting of C*-algebraic quantum groups, see \cite{kuc.autos,kuce.classify.Hopf.by.Ktheory0,kuce.classify.Hopf.by.Ktheory} for related results.
Our main result is  probably Theorem \ref{th:banacheweski.Hopf}  and its corollaries, such as Corollary  \ref{cor:Cuntz.banacheweski.Hopf} and Theorem \ref{th:banacheweski.Hopf.ASH}.

  There are two  Cuntz semigroups that are commonly used, denoted here by $\Cu(A)$ and $W(A),$ respectively. The first has better functorial properties; being, for example, continuous with respect to inductive limits, but the second was historically prior. The definition of the Cuntz semigroup, $\Cu(A),$ of a C*-algebra, $A,$ can be given, in the Hilbert module picture, as follows: first, consider the set of countably generated Hilbert modules over the algebra, endowed with direct sum as the addition operation. There exists a certain order relation on such modules, related to, but more complicated than inclusion,  although reducing to inclusion in some cases. Antisymmetrizing the order relation defines an equivalence relation on Hilbert modules, and the quotient of the set of countably generated Hilbert modules by this equivalence relation is the Hilbert module picture of the Cuntz semigroup, $\Cu(A).$ The semigroup $W(A)$ is the subsemigroup of $\Cu(A)$ consisting of elements contained in  $A^n.$ See  \cite{CEI}. 

The Cuntz semigroup has become a standard technical tool for studying C*-algebras. We consider the case of C*-algebras with Hopf algebra structure, and, in Section \ref{Section2}, after developing some preliminaries, we
 define  a convolution product on the Cuntz semigroup of a C*-algebraic quantum group, in the case of stable rank one, and establish  a few of its properties. We find that the product has good behaviour in general, and that products with a fixed element are Cuntz semigroup morphisms.  We generalize to higher stable rank in Section \ref{Section3}, where we furthermore relate our results to the operator picture of the Cuntz semigroup. Our main result at this point is that there is a (semi)ring structure on the Cuntz semigroup of a compact-type C*-algebraic quantum group: the main issue having been compatibility with the equivalence relations that define the Cuntz semigroup.

We now give an outline of what is covered in the various sections of the paper.
In Section \ref{Section2}, we obtain a kind of Mazur--Ulam lemma, extended to the case of Hilbert modules. The~classic Mazur--Ulam theorem, see \cite{MazurUlam}, states that a surjective isometry of normed spaces is necessarily affine (or linear, depending on the exact form of the theorem).  Our lemma extends the theorem slightly to the case of Hilbert modules, in such a way that the additional structure of a Hilbert module is taken into account. Our result gives a condition for a mapping of Hilbert modules that is an isometry in a certain weak sense to be in fact a unitary operator at the Hilbert module level. This~technique is used  to show that the product we define respects the complicated and delicate equivalence relations of the Cuntz semigroup. There is some relationship between the problem of showing that equivalence classes are respected and the problem of finding Hilbert module connections~\cite{CS}, and this is discussed briefly at the end of Section \ref{Section2}. In this section, there is also an example of the product, namely, the finite-dimensional case.
It seems to be useful, and perhaps necessary, to~have a product defined by a form of convolution, since this allows the Fourier transform---see \cite{kahng} for information about the Fourier transform---to~be brought to bear in proving  Elliott-type results. We~study the relationship of the product with convolution at the start of Section \ref{Section4} and then develop the double dual picture of the Cuntz semigroup: this will be technically useful and is perhaps a timely contribution since double dual techniques are currently fashionable in the classification program for C*-algebras.
The main result of Section \ref{Section4} is that we obtain, in embryonic form, isomorphism results for C*-algebraic quantum groups. The overall main results are in Section \ref{Section5}, which also briefly considers the case of K-theory. The significance of these isomorphism results lies, first of all, in the implication that a relatively small object (the Cuntz semiring) completely characterizes a quantum group (at~least~in a given class); and secondly,   in parallel with the C*-algebraic Elliott classification program. Most~of the results of this classification program are obtained from theorems that show that within certain classes of C*-algebra, maps at the level of K-theory or the Cuntz semigroup can be lifted to C*-algebraic maps. It is quite often the case in the C*-algebraic setting that the K-theory groups must be regarded as being augumented by some additional information. The results that we obtain for C*-algebraic quantum groups are of precisely this sort.
We have previously considered classifying C*-algebraic quantum groups by K-theory  \cite{kuce.classify.Hopf.by.Ktheory,kuce.classify.Hopf.by.Ktheory0}, but in those results, we considered the K-theory of the discrete dual, and made extensive use of discreteness, or even finiteness, in the proofs. In general, it seems most natural to classify using an invariant that lives on the compact-type side of the given C*-algebraic quantum group, thus we focus on the Cuntz semigroup and K-theory group of a compact-type C*-algebraic quantum group.  The Cuntz semigroup is generally thought, in the context of the classification program for C*-algebras, to be the appropriate classifying functor for nonsimple C*-algebras. A  C*-algebraic quantum group is nonsimple as a C*-algebra, therefore, a classification based on the Cuntz semigroup should ultimately have greater scope, in theory, than a classification based on K-theory. Nevertheless, we do briefly address the case of K-theory in the last section.

A Hopf algebra is a bi-algebra with an antipode map $\antipode.$ See  \cite{abe,KP1966} for information on Hopf algebras.
Compact quantum groups were defined first by Woronowicz \cite{Woronowicz1998}. Multiplicative unitaries were introduced by Baaj and Skandalis in \cite{BS} (Chapter 4). 
A compact-type C*-algebraic quantum group is unital as an algebra, and has structure maps that are compatible with the C*-algebraic structure. Each compact-type C*-algebraic quantum group carries with it an algebraic quantum group as a~dense subset, and an enveloping Hopf--von Neumann algebra. See  \cite{timmermann} for a discussion.
The algebraic elements of a compact-type C*-algebraic quantum group $A$ will be denoted $A_0$ and the enveloping Hopf--von Neumann algebra by $\Add.$ The dual object of a compact-type C*-algebraic quantum group is both a discrete multiplier Hopf algebra and a C*-algebra, $B.$ The dual is called a discrete-type C*-algebraic quantum group.  The condition of stable rank 1 is sometimes used below: being separable and of stable rank one  means that there exists, at the level of normed algebras, a countable dense set of invertible elements. We assume the usual density properties, sometimes called cancellation properties,
$A\tensor A=\overline{(A\tensor 1)\coproduct{} (A)}$ and $A\tensor A=\overline{(1\tensor A)\coproduct{} (A)}.$ We generally assume nuclearity, which is a mild technical condition, equivalent to co-amenability, that makes states on the Cuntz semigroup easier to handle and also improves the technical behavior of tensor products. See \cite{tomatsu} for the equivalence of co-amenability and nuclearity, also observed by \'E. Blanchard. Nuclearity allows constructing faithful Haar states, and  we  assume that our C*-algebraic quantum groups have faithful Haar states. Our notation  denotes co-products by $\coproduct{},$ antipodes by $\antipode,$ pairings by $\beta(\cdot\,,\cdot),$ Haar states by $\haar,$ and  co-units by $\counit.$
We denote the flip, on~a~tensor product, by $\flip$.

\section{The Gelfand-Naimark-Segal construction, and some useful
	 Modules and Maps}\label{Section2}

There are two main techniques used in this section. The first is to observe that the  inner product module $A\tensor \H,$ where $\H$ is a suitably chosen Hilbert space and $A$ is a compact-type C*-algebraic quantum group, can be embedded in a natural way into $\H\tensor \H,$ and that furthermore a multiplicative unitary can be chosen in such a way that it has certain algebraic properties when restricted to the copy of $A\tensor \H$ inside $\H\tensor \H.$ This amounts to a linearized version of well-known properties of multiplicative unitaries. The second is an automatic continuity result for Hilbert space mappings that have suitable algebraic properties when restricted to a Hilbert module embedded inside a given Hilbert space. By~the term automatic continuity, we mean that a mapping that is continuous with respect to one topology may be shown to be continuous in a stronger topology in the presence of suitable algebraic properties.

We briefly recall some relevant facts about Hilbert modules from \cite{lance}, although our notation is as in \cite{CEI}, which is the foundational paper on Cuntz semigroups in the Hilbert module picture. An~\textit{inner product module over a C*-algebra $A$} is a right $A$-module $E$ equipped with an $A$-valued mapping $E\times E\arrow A$ that is usually denoted $\ip{\cdot}{\cdot}_A$. This map, usually called the \textit{inner product}, is  $A$-sesquilinear, meaning $A$-linear in the second entry, following \cite{lance} (p. 2). It must also be non-degenerate, in~the sense that $\ip{x}{x}$ is zero if and only if $x$ is zero, and positive-definite.  Note that we are not allowing the inner product to be semi-definite. There is a natural norm obtained from the C*-norm of $A$, namely $\|x\|_E:=\|\!\ip{x}{x}\!\|^{1/2}.$  If the module is complete with respect to the norm, we have a (right) Hilbert $A$-module (and it is straightforward to complete an inner product module).

\begin{Definition} Let $A_1$ and $A_2$ be  unital C*-algebras with  faithful states, denoted $f_i.$ Let $V\colon E_1\arrow E_2$ be a $\C$-linear map of countably generated inner product (Hilbert) $A$-modules $E_1$ and $E_2.$ We say that $V$ is a~\emph{2-isometric map} with respect to $f_1$ and $f_2$ if
$ f_1\left( \ip{x}{y}_{E_1}\right)= f_2\left(\ip{Vx}{Vy}_{E_2}\right).$
\end{Definition}
As defined above, the 2-isometric property is really just a new term for the familiar property of extending to an unitary or isometry on an enveloping Hilbert space: it seems better to reserve the term unitary for Hilbert module unitaries, which are $A$-linear. A 2-isometric map need not be an~$A$-module~map.

 Multiplicative unitaries were introduced by Baaj and Skandalis in  \cite{BS} (Chapter 4), see also \cite{BBS}: given a compact quantum group  $A$ with coproduct $\coproduct{}\colon A\arrow  A \mintensor A,$ and Haar state $\haar\colon A\arrow\C$ which is faithful on $A$  (i.e., $A$ is the reduced form of the compact quantum group), let $\H$ be the Hilbert space $L^2 (A,\haar),$  let $e\in \H$  be $e=\Lambda_{\haar}1.$ Then, $Ae$ is dense in $\H$ and one defines a multiplicative unitary $V\in\adjointable(\HtH)$ by $Vae\tensor be=\coproduct{}(a)e\tensor be.$ Another multiplicative unitary  $W,$ satisfying $W^* ae\tensor be=\coproduct{}(b)ae\tensor e,$ was~introduced in the locally compact case by Kustermans and Vaes \cite{KustermansVaes}, see also the notes by Maes and van Daele  \cite{MaesVanDaele}.
The $\C$-linear map $V\colon A\tensor A \arrow A\tensor\H$ that will be constructed in Lemma~\ref{lem:special.module} below could in fact be  defined as: $x\mapsto W^{*} x(1\tensor e),$ where $W$ is the above multiplicative unitary, which~is in $\Mult(A\tensor\compact(\H)).$ (We should mention that although $A\tensor\H$ and $A\tensor A$ are right Hilbert $A$-modules, this map $V$  is not intended to be $A$-linear.)

Thus, Lemma \ref{lem:special.module} can be eventually deduced from the locally compact case, for example, starting from \cite{KustermansVaes} (Proposition 3.17). We do, however, present a self-contained C*-algebraic proof below, for the reader's convenience.

As a C*-algebraic preliminary, we remark that if $A$ is a compact-type  C*-algebraic quantum group with a faithful (Haar) state, assumed nuclear at the C*-algebraic level, we may construct the GNS Hilbert space $\H$ associated with it, sometimes called a rigged Hilbert space. At the level of normed vector spaces, this construction embeds  $A\tensor A$ in $A\tensor\H,$ which is itself embedded in $\H\tensor\H.$   The~tensor product $A\tensor\H$ has a natural $A$-module and inner product module structure; an  $A$-valued inner product is constructed  by taking the (exterior) tensor product of  inner products on $A$ and $\H$. The~fact that $A$ and $\C$ commute simplifies the proof of positive-definiteness; see also \cite{lance} (pp. 34--35).

\begin{Lemma}Let $A$ be a separable, nuclear, and compact-type  C*-algebraic quantum group, with faithful right Haar state $f$.  There exists a $\C$-linear map  $V\colon {  A\tensor A}\arrow A\otimes\H$ such that
$V(\coproduct{}(a)x)=aV(x),$ for all $a\in A$ and $x\in A\tensor A.$  This map is a 2-isometry with respect to the faithful states $f\tensor f$ on $A\tensor A$ as a Hilbert module over itself, and $ f $ on the inner product $A$-module $A\tensor\H.$
\label{lem:special.module}\end{Lemma}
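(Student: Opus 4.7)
My plan is to realize $V$ through the multiplicative unitary, in the spirit of the hint given immediately before the lemma. The relevant ingredient is a unitary $W\in\Mult(A\tensor\compact(\H))$ implementing the coproduct by
\[
W\,\coproduct{}(a)=(a\tensor 1)\,W\qquad\text{for all }a\in A,
\]
which is available for every compact-type C*-algebraic quantum group---either directly from the Baaj--Skandalis pentagon approach on $\H\tensor\H$, or by flipping the Kustermans--Vaes multiplicative unitary recalled in the excerpt. I would then define
\[
V(x)\;:=\;W\cdot x(1\tensor e),\qquad x\in A\tensor A,
\]
where $A\tensor A$ is viewed inside $\Mult(A\tensor\compact(\H))$ via the regular representation of $A$ on $\H$ in the second leg, and where $1\tensor e\in A\tensor\H$ with $e=\Lambda(1)$. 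The map $x\mapsto x(1\tensor e)$ sends $a\tensor b$ to $a\tensor\Lambda(b)$, so it is $\C$-linear and bounded as a map $A\tensor A\arrow A\tensor\H$, and left-multiplication by the bounded multiplier $W$ preserves $A\tensor\H$.

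The intertwining property is then immediate from the identity displayed above:
\[
V\bigl(\coproduct{}(a)x\bigr)=W\,\coproduct{}(a)\,x(1\tensor e)=(a\tensor 1)\,W\,x(1\tensor e)=a\,V(x),
\]
with $a$ understood to act on $V(x)\in A\tensor\H$ by left multiplication on the first leg.

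For the 2-isometric property, the key point is that $W$, being a unitary multiplier, is also a Hilbert module unitary on the right $A$-module $A\tensor\H$, so $\langle Wu,Wv\rangle_{A\tensor\H}=\langle u,v\rangle_{A\tensor\H}$. Applied to $u=x(1\tensor e)$ and $v=y(1\tensor e)$ for elementary tensors $x=a\tensor b$, $y=c\tensor d$, this reduces the computation to
\[
\langle a\tensor\Lambda(b),\,c\tensor\Lambda(d)\rangle_{A\tensor\H}=a^{*}c\cdot f(b^{*}d),
\]
and applying $f$ gives $f(a^{*}c)\,f(b^{*}d)$. Since $\langle a\tensor b,c\tensor d\rangle_{A\tensor A}=a^{*}c\tensor b^{*}d$, the same scalar is produced by $(f\tensor f)\langle x,y\rangle$. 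The 2-isometry then extends by linearity and continuity from elementary tensors to all of $A\tensor A$.

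The principal obstacle is bookkeeping of conventions. Several versions of the multiplicative unitary are in circulation (Baaj--Skandalis, Kustermans--Vaes, their adjoints, and their spatial flips), and only the variant in which the coproduct is implemented with $a\tensor 1$ on the first factor---rather than $1\tensor a$ on the second---gives the stated intertwining without an additional flip on $\H\tensor\H$. Once the correct variant is fixed, neither step above requires more than a one-line computation: the intertwining is purely the defining property of $W$, and the isometry is immediate from $W^{*}W=1$ together with the defining formula $\|\Lambda(b)\|_{\H}^{2}=f(b^{*}b)$ for the GNS inner product.
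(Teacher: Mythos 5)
Your argument is correct, but it is not the paper's proof: it is precisely the alternative route that the paper sketches in the paragraph immediately preceding the lemma (``$x\mapsto W^{*}x(1\tensor e)$'') and then deliberately declines in favour of a self-contained argument. The entire weight of your proof rests on one imported fact: that a unitary $W\in\Mult(A\tensor\compact(\H))\isom\adjointable(\HA)$ implementing the coproduct exists as a \emph{multiplier}, i.e.\ as an adjointable unitary on the standard Hilbert module. That is what makes ``left multiplication by $W$ preserves $A\tensor\H$'' and the preservation of the $A$-valued inner product into one-liners. This fact is genuinely nontrivial --- the pentagon equation alone only yields a unitary on $\H\tensor\H$, and promoting it to a multiplier of $A\tensor\compact(\H)$ is exactly the analytic content (the paper cites Kustermans--Vaes, Proposition 3.17, for this). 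The paper's own proof does that work by hand: it constructs the inverse map $T\colon (a_i)\mapsto\sum\coproduct{}(a_i)(g_i\tensor 1)$ on an orthonormal basis of algebraic elements, verifies the 2-isometry by a direct Haar-state computation, inverts $T$ algebraically on $A_0\tensor A_0$ via Van Daele's formula involving the antipode, and then shows that the range of the continuous extension lies in $A\tensor\H$ by proving that each coefficient $(\Id\tensor f(g_j^*\,\cdot))Vx_i$ converges in the C*-norm of $A$; that coefficient argument is the self-contained substitute for the multiplier property you assume. So your route buys brevity at the price of an external citation, while the paper's buys self-containedness at the price of the basis and coefficient bookkeeping. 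Your remaining steps --- the intertwining computation from $W\coproduct{}(a)=(a\tensor 1)W$, the reduction of the 2-isometry to elementary tensors, and the boundedness of $x\mapsto x(1\tensor e)$ --- are sound, and your caveat about fixing the correct variant of $W$ (first-leg versus second-leg implementation) is exactly the right thing to flag. If you wanted to avoid the citation entirely, proving the multiplier property from scratch would lead you back to something very much like the paper's coefficient computation.
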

\begin{proof}  Let us first define the inverse map $T\colon A\tensor \H\arrow A\tensor A.$
The domain space for the map is  $A\tensor\H,$ a Hilbert space with coefficients in $A,$ regarded as infinite sequences $(a_i)$ of elements of $A.$
Since $A$ has a faithful Haar state, we may embed $A$ in a Hilbert space by the GNS construction. Taking, then, a countable dense subset consisting of algebraic elements of $A$ and applying the Gram--Schmidt process at the Hilbert space level, we obtain a countable basis $(g_i)$ for the Hilbert space. Since these elements $g_i$ are still (algebraic) elements of $A$, we then define a possibly unbounded map $T$ by $T\colon (a_i)\mapsto \sum \coproduct{A}(a_i)(g_i \tensor 1),$ where the sum on the right is required to converge in norm.  It is clear that the domain of this map is an $A$-module under the diagonal action of $A$ and that $T(ax)=\coproduct{A} (a)T(x).$ The range is norm-dense in $A\tensor A$ because of the density property $A\tensor A=\overline{\coproduct{} (A)(A\tensor 1)}.$

 We now show $T$ is continuous in an appropriate sense, by showing that it has  the 2-isometric property. Let the sequence $(a_i)$ denote an element in $A\tensor\H.$ We obtain,
  by the properties of the Haar state and the fact that the coproduct is a *-homomorphism:
 \begin{equation} \begin{split}(\Id\tensor f)\ip{T((a_i))}{T((a_j))}&= (\Id\tensor f)\left(\sum_{ij} (g_i^*\tensor1)\coproduct{}(a_i^*a_j)(g_j\tensor1)\right)\\ &=\sum_{ij} f(a_i^*a_j)g_{i}^* g_j.
\end{split}\label{eqn:basis.decomp}\end{equation}

Applying $f$ one more time, to both sides, we have:
\begin{equation*}(f\tensor f)\ip{T((a_i))}{T((a_j))}=\sum_{ij} f(a_i^*a_j)f(g_{i}^* g_j).\end{equation*}

It follows from the basis property of the $g_i$ that the right hand side simplifies to $\sum_{i} f(a_i^*a_i)$ which is evidently equal to $f\left(\ip{(a_i)}{(a_i)}_{A\tensor\H}\right).$ This establishes the 2-isometric property. 
Denote the inverse map for $T$ by $V.$

Extending $T$ and $V$ to  (multiplicative) unitaries on $\HtH$  shows that $V$ is continuous with respect to the Hilbert space norm topology on $\HtH.$
Consider a sequence $(x_i)$ of algebraic elements of $A\tensor A$ that converges in the (minimal) C*-tensor product  norm on $A\tensor A$ to some element of $A\mintensor A.$ Thus,~$Vx_i$ converges in this topology to some element $L$ of $\HtH.$  We wish to show that this element $L$ is actually in  $A\tensor\H.$
Thus~we must compute the Hilbert space coefficients of $L$ with respect to the second factor of this tensor product---in other words, apply to $L$ the slice maps $\Id\tensor f(g_j^* \cdot),$ where the $g_j$ are the basis of $\H$ that we constructed earlier.

We first show that the $Vx_i$ are algebraic elements, in $A_0 \tensor A_0.$ When applied to an algebraic element of $A_0\tensor A_0,$ the map $T$ takes $a\tensor b$ to $\coproduct{}(a) (b\tensor 1).$ By  \cite{VanDaele1994} (Proposition 2.1), applied in $A^{op},$ 
 it~follows that this map has the algebraic inverse $V:=  \sigma\circ(r\tensor\Id)(\Id\tensor\antipode\tensor\Id)(\Id\tensor\coproduct{}),$
where $r$ is the bi-linear map that takes a simple tensor $a\tensor b$ to the product $ba,$ and $\sigma$ is the tensor flip. Since the $x_i$ are algebraic elements of $A\tensor A$, it follows that the sequence $(Vx_i)$ consists of algebraic elements of $A\tensor A.$

 We are now in a position to compute the required coefficients. We recall that the $g_j$ are algebraic elements of $A.$ Applying the slice map $\Id\tensor f(g_j^* \cdot)$ to $V,$ we have
\begin{equation*} \left(\left(\Id\tensor f(g_j^* \,\cdot)\right)V\right)= (f(g_j^* r(\cdot)\tensor\Id)(\Id\tensor\antipode\tensor\Id)(\Id\tensor\coproduct{}),\label{eqn:fV}
\end{equation*}
where the linear functional $f(g_j^* r(\cdot))\circ(\Id\tensor\antipode)$ can also be written, using properties of the pairing $\beta,$~as
\begin{equation}G_{j}\colon b\tensor c \mapsto \beta(b\tensor c,(\Id\tensor\antipodeh)\circ\sigma\circ\coproduct{}(\F(g_j^*))\,),\label{eqn:pairing}
\end{equation}
 where $\F(g_j^*)$ is the Fourier transform of the element $g_j^*,$ and $\sigma$ is the tensor product flip.
Equation~\eqref{eqn:pairing}  provides, for each fixed basis element $g_j\in \H,$ a bounded linear functional, $G_{j},$ on the whole of the C*-algebraic  tensor product $A\tensor A.$ The boundedness of $G_{j}$ and the fact that the sequence $x_i$ was chosen to converge in the C*-norm implies that
\begin{equation} (\Id\tensor f(g_j^* \cdot))Vx_i  = (G_{j}(\cdot)\tensor\Id)(\Id\tensor\coproduct{})x_i ,\label{eqn:gV}
\end{equation}
{converges, with respect to $i,$ in the C*-norm on $A$.
 Thus, the limit is an element of the C*-algebra $A$. This shows that the Hilbert space coefficients of $L$ are in $A,$ and therefore  that $L$ is in $A\tensor\H.$ 
This means that $V$ maps  $A\mintensor A$ to $A\tensor\H.$ Since $V$ is inverse to $T,$ and $T$ is a 2-isometry with the property $T(ax)=\coproduct{A} (a)T(x),$ it follows that $V$ is a 2-isometry with the property $V(\coproduct{A}(a)x)=aV(x).$}
An~alternative argument to show that $L$ is in $A\tensor\H$, using metrics rather than coefficients, is, following~\cite{halmos} (pp.~182--183): since we have assumed that $A$ is separable, the weak topology of $\H$ is metrizable on bounded subsets. The sequence $Vx_i$ can be supposed to be in the unit ball of the Hilbert space $\HtH,$ and  then Equation \eqref{eqn:gV} implies that $Vx_i$ converges to $L$ in $A\tensor\H$ with the  C*-norm on the first factor and the metric $\sum \frac{2^{-j}}{||G_j||}|<g_j,\cdot>|$ on the second factor. 
\end{proof}
  An operator $V$ having the property $V(\coproduct{}(a)x)=aV(x)$ will be referred to as having the \textit{twisted-linear} property. The twisted-linear property implies ordinary linearity over $\C,$ because the~coproduct $\coproduct{}$ is a unital homomorphism and maps $\lambda 1_A $ to $\lambda\coproduct{}(1_{A})$ for all complex scalars $\lambda.$ The~2-isometry $V$ is, from the C*-algebraic point of view, a (restriction of an) unitary, but when using the twisted-linear property, it seems more natural to denote the inverse by $V\inv$ instead of $V^*.$

The usual GNS construction provides an inner product module.  The completion of $A\tensor \H$ in the norm provided by the inner product is (unitarily equivalent to) the standard Hilbert module, denoted $\HA.$ {Some authors, such as Lance, denote the standard Hilbert module by $H\tensor A.$}  Thus, the above operator $V$ maps $A\tensor A$ into $\HA.$ The ``compact'' operators $\compact(\HA)$ are isomorphic to $A\tensor\compact$ and the adjointable bounded operators $\adjointable(\HA)$ are isomorphic to the multiplier algebra $\Mult(A\tensor\compact).$ See \cite{Kasparov.modules,lance}. 

We now take the first step towards defining a product on the Cuntz semigroup:
\begin{Definition} If $M_1$ and $M_2$ are (right) Hilbert sub-modules of $A$, we denote by $M_1\boxproduct M_2$ the (right) Hilbert sub-module of $\H_A$ obtained by closing
$V(M_1\tensor M_2)$ in the Hilbert module norm on $\H_A.$
\label{def:boxproduct}\end{Definition}

The main issue is to show that this product operation is well-behaved at the level of Cuntz equivalence classes of Hilbert modules. It will follow that the Cuntz semigroup of $A$ is a (semi)ring.

The 2-isometry property can be strengthened in the case of $A$-module maps:
\begin{Proposition} Suppose $A$ is a compact-type C*-algebraic quantum group with faithful Haar state $f$ and that $E_1$ and $E_2$ are inner product modules over $A.$ Suppose that $V\colon E_1\arrow E_2$ is an $A$-module map that is  2-isometric with respect to $f.$ The map $V$ is then isometric with respect to the norm on the inner product module.\label{prop:2isometries.want.to.be.isomorphisms}
\end{Proposition}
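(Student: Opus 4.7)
\bigskip
\noindent\textbf{Proof plan.} The plan is to promote the scalar-valued 2-isometric identity, which is an equality of complex numbers, to the operator-valued identity $\ip{x}{y}_{E_1}=\ip{Vx}{Vy}_{E_2}$ in $A$, by exploiting $A$-linearity on both sides and then invoking faithfulness of $f$. The unitality of $A$ (it is compact-type) plays a crucial role at the end.

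First I would observe that, because the inner product is $A$-sesquilinear and $V$ is an $A$-module map, one has, for any $a,b\in A$ and $x,y\in E_1$,
\begin{equation*}
\ip{xa}{yb}_{E_1}=a^{*}\ip{x}{y}_{E_1}b,\qquad \ip{V(xa)}{V(yb)}_{E_2}=a^{*}\ip{Vx}{Vy}_{E_2}b.
\end{equation*}
Substituting $xa$ and $yb$ into the 2-isometric identity for $V$ and applying $f$ gives, for all $a,b\in A$,
\begin{equation*}
f\!\left(a^{*}\ip{x}{y}_{E_1}b\right)=f\!\left(a^{*}\ip{Vx}{Vy}_{E_2}b\right).
\end{equation*}
Set $T:=\ip{x}{y}_{E_1}-\ip{Vx}{Vy}_{E_2}\in A$. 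The above identity says $f(a^{*}Tb)=0$ for all $a,b\in A$.

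Next I would use unitality: taking $a=1_A$ gives $f(Tb)=0$ for every $b\in A$, and then taking $b=T^{*}$ gives $f(TT^{*})=0$. Since $f$ is a faithful state on $A$, this forces $T^{*}=0$, hence $T=0$. Therefore $\ip{x}{y}_{E_1}=\ip{Vx}{Vy}_{E_2}$ for all $x,y\in E_1$. Specializing to $y=x$ yields $\ip{x}{x}_{E_1}=\ip{Vx}{Vx}_{E_2}$ in $A$, so the $A$-valued inner product (and hence, a fortiori, its C*-norm) is preserved, which is the claimed isometric property.

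The only point requiring any care is the final faithfulness step: one must ensure that the $a=1_A$ substitution is legitimate and that the standard faithfulness convention ($f(c^{*}c)=0\Rightarrow c=0$) is applied to $c=T^{*}$, rather than directly to $T$ (which need not be positive, or even self-adjoint, when $x\neq y$). No deeper obstacle appears; the argument is essentially an algebraic upgrading of 2-isometry to full isometry via the module action.
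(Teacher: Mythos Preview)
Your argument is correct, and in fact it proves a stronger conclusion than the paper does at this stage: you establish the operator-valued identity $\ip{x}{y}_{E_1}=\ip{Vx}{Vy}_{E_2}$ in $A$, whereas the paper's proof only shows $\|\ip{x}{x}_{E_1}\|_A=\|\ip{Vx}{Vx}_{E_2}\|_A$. The paper's route is to first prove a norm formula $\|y\|_A=\sup_{b\neq 0}\frac{|f(b^{*}yb)|}{f(b^{*}b)}$ for positive $y\in A$, using Sakai's quadratic Radon--Nikod\'ym theorem to approximate normal positive functionals by vector functionals $f(b^{*}\cdot b)$; it then feeds the $A$-linearity identity $f(b^{*}\ip{x}{x}_{E_1}b)=f(b^{*}\ip{Vx}{Vx}_{E_2}b)$ into this formula to equate the norms. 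Your approach bypasses the Radon--Nikod\'ym machinery entirely: from $f(a^{*}Tb)=0$ for all $a,b$ you go straight to $T=0$ via faithfulness and unitality. This is more elementary and yields directly what the paper only obtains later, in Theorem~\ref{th:equivalences.of.maps}, by combining the norm isometry with results of Lance and Blecher. The paper's detour does, however, isolate the norm formula as a reusable fact about states on unital C*-algebras, which may have independent interest.
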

\begin{proof}
We claim that if $y$ is a positive element of $A,$ then the C*-norm of $y$ is given by the formula:
$\|y\|_{A}=\sup_{b\not=0} \frac{|f(b^*yb)|}{f(b^*b)},$ where the supremum is over the nonzero elements $b$ in $A,$ and $f$ is the given faithful state. From the $A$-linearity of $V$ and the hypotheses, we will then have that
$ f\left(b^*\ip{x}{x}_{E_1}b\right)= f\left( \ip{b x}{bx}_{E_1}\right) =f\left(\ip{Vb x}{Vbx}_{E_2}\right) = f\left(b^*\ip{Vx}{Vx}_{E_2}b\right).$
Dividing the left side and the right side of the above by $f(b^*b),$ and then applying the claim, we have that $\|\ip{x}{x}_{E_1}\|_{A}=\|\ip{Vx}{Vx}_{E_2}\|_{A},$ from which it follows that
$\|x\|_{E_1}^2=\|Vx\|_{E_2}^2.$ In other words, $V$ is  isometric with respect to the usual norms on $E_1$ and $E_2,$ as was to be shown. This argument uses the completeness of the algebra, $A$, but does not require the module, $E_i,$ to be complete. Thus, the conclusion holds in inner product modules as well as in Hilbert modules.

The claim remains to be established. 
 Given a positive element $y\in A$, we have
$\|y\|_{A} =\sup\frac{|h(y)|}{\|h\|}$ where the supremum over nonzero $\sigma$-weakly continuous positive linear functionals $h$ and $\|h\|$ denotes the usual operator norm of a linear functional. By Sakai's quadratic Radon--Nicod\'ym theorem \cite{Sakai1965}---see also~\cite{niestegge} (Corollary 1.6 and Theorem 2.6)---it follows that linear functionals of the form $h=f(b^*\cdot b)$ with $b$ coming from the algebra are dense within the  linear functionals in our class. (This is well-known, see, e.g., van Daele's theory of multiplier Hopf algebras \cite{VanDaele1994}.)
We thus have that
$\|y\|_{A}=\sup_{b\not=0} \frac{|f(b^*yb)|}{\|f(b^*\cdot b)\|},$ where the norm in the denominator is the usual operator norm. The operator norm of a positive linear functional is equal to the value of the functional at $1,$ so that $\|f(b^*\cdot b)\|=f(b^*b).$
\end{proof}

Following  \cite{MF} (Definition 3.2), two Hilbert $A$-modules are said to be \textit{isomorphic as Hilbert C*-modules} if and only if there exists a linear bijective mapping $F$ of one onto the other such that $F(ax)=aF(x)$ and $\ip{x}{y}_{E_1}=\ip{Fx}{Fy}_{E_2}$.
This form of isomorphism is well-known to be equivalent to several other slightly different forms of isomorphism, and thus we can push the result of the last proposition further. The next Theorem is a straightforward corollary of the last proposition, and can be seen as a kind of Mazur--Ulam theorem \cite{MazurUlam,MazurUlam2} for Hilbert modules. It gives conditions under which a mapping that is an isometry in a certain weak sense is in fact a unitary from one Hilbert module to another: unlike the classical Mazur--Ulam theorem, we definitely need to assume $A$-linearity.
 
\begin{Theorem}Let $F\colon E_1 \arrow E_2$ be a surjective $A$-linear map between Hilbert $A$-modules, where $A$ is a C*-algebraic quantum group  with faithful state, $f.$ Then, if the map has any one of the following four properties, it has all of them: \begin{enumerate}
\item $F$ is a unitary element of the bounded adjointable operators $\mathcal{L}(E_1,E_2)$,
\item $F$ is a Hilbert module isomorphism,
\item $F$ is an isometry with respect to the norms on $E_1$ and $E_2,$ and
\item $F$ has the property $ f\left(\ip{x}{y}_{E_1}\right)= f\left(\ip{Fx}{Fy}_{E_2}\right),$ where $f$ is the given faithful state.
\end{enumerate}\label{th:equivalences.of.maps}
\end{Theorem}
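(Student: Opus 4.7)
The plan is to close a cycle of implications among the four conditions, exploiting the standard equivalences known for Hilbert modules together with the newly established Proposition \ref{prop:2isometries.want.to.be.isomorphisms}. The appeal to the reference \cite{MF} in the discussion immediately preceding the theorem already hints at the strategy: conditions (1), (2), (3) are classically known to be equivalent for $A$-linear surjective maps, so the new content is really the insertion of the weakened 2-isometric condition (4) into this chain.

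First I would dispose of (1) $\Leftrightarrow$ (2) as a standard fact about Hilbert $A$-modules: an element $F\in\mathcal{L}(E_1,E_2)$ is unitary precisely when $F^{*}F=\Id_{E_1}$ and $FF^{*}=\Id_{E_2}$, which is a direct reformulation of $F$ being a surjective $A$-linear inner-product preserving map in the sense of Definition 3.2 of \cite{MF}. Next, (2) $\Rightarrow$ (3) is immediate because the module norm is defined from the inner product by $\|x\|=\|\ip{x}{x}\|^{1/2}$; and (2) $\Rightarrow$ (4) is immediate by applying $f$ to both sides of the inner-product equality. The reverse direction (4) $\Rightarrow$ (3) is precisely the content of Proposition \ref{prop:2isometries.want.to.be.isomorphisms}, which is already established.

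What remains is to close the cycle via (3) $\Rightarrow$ (2). This is the classical content of the reference \cite{MF}: a surjective $A$-linear norm isometry between Hilbert $A$-modules automatically preserves the $A$-valued inner product. The sketch of the argument runs as follows: since $F$ is $A$-linear, for every $a\in A$ one has $\|Fxa\|_{E_2}=\|xa\|_{E_1}$, hence $\|a^{*}\ip{Fx}{Fx}_{E_2}a\|_A=\|a^{*}\ip{x}{x}_{E_1}a\|_A$ for all $a$. Combining this with the norm formula $\|y\|_A=\sup_{b\neq0}|f(b^{*}yb)|/f(b^{*}b)$ for positive $y$ used in the previous proposition, together with $\mathbb{C}$-linear polarization
\[4\ip{x}{y}=\sum_{k=0}^{3}i^{k}\ip{x+i^{k}y}{x+i^{k}y},\]
one recovers inner-product preservation, so $F$ is a Hilbert module isomorphism.

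The main obstacle is the implication (3) $\Rightarrow$ (2): every other step is either a formal unfolding of definitions, an application of the faithful state $f$, or the already-proved Proposition \ref{prop:2isometries.want.to.be.isomorphisms}. It is the norm-to-inner-product reconstruction that encodes the nontrivial Mazur--Ulam-type content of the theorem, namely that norm data plus $A$-linearity suffice to pin down the full $A$-valued inner product on the module. Because this step is classical (and cited), the present theorem is essentially a packaging of those equivalences with the new condition (4), slotted into the chain via the preceding proposition.
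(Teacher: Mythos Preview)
Your approach matches the paper's: both establish the equivalences by a chain of implications, invoking the cited literature for the classical parts and Proposition~\ref{prop:2isometries.want.to.be.isomorphisms} for $(4)\Rightarrow(3)$. The paper's cycle is $(1)\Rightarrow$ all, $(2)\Rightarrow(1)$ via Lance \cite{lance1994}, $(3)\Rightarrow(2)$ via Blecher \cite{blecher} (through \cite{MF}), and $(4)\Rightarrow(3)$ via the preceding proposition.

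Two points are worth tightening. First, in your treatment of $(1)\Leftrightarrow(2)$ you begin with ``an element $F\in\mathcal{L}(E_1,E_2)$,'' but condition~(2) as defined in \cite{MF} does not assume adjointability; the nontrivial direction $(2)\Rightarrow(1)$ is precisely that a bijective inner-product-preserving $A$-linear map is automatically adjointable, and the paper cites Lance \cite{lance1994} (Theorem~1) for this. Second, your sketch of $(3)\Rightarrow(2)$ does not go through as written: from $\|a^{*}\ip{Fx}{Fx}a\|=\|a^{*}\ip{x}{x}a\|$ you obtain only equality of C*-norms of certain positive elements, not equality of the elements themselves, so the $\C$-linear polarization cannot be applied to conclude $\ip{Fx}{Fy}=\ip{x}{y}$. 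The paper does not attempt to reprove this step either; it simply cites Blecher's theorem \cite{blecher} (Theorem~3.2), as reformulated in \cite{MF} (Proposition~3.3), which establishes exactly that a surjective $A$-linear norm isometry of Hilbert modules is a Hilbert module isomorphism. Since you ultimately defer to \cite{MF} for this implication anyway, the gap in the sketch is not fatal, but the sketch itself should be dropped or replaced by the citation.
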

\begin{proof} It is clear that the first property implies all the other properties. That the second property implies the first means showing that a mapping providing an isomorphism of Hilbert  $A$-modules is automatically adjointable. This is shown in  \cite{lance1994} (Theorem 1). See also the more recent reference~\cite{lance} (Theorem 3.5.)  For the proof that the third property implies the second, we use a result of Blecher's~\cite{blecher} (Theorem 3.2), re-interpreted somewhat \cite{MF} (Proposition 3.3), which shows that a surjective  $A$-linear Banach module map $F$ of Hilbert $A$-modules that has the properties $F(ax)=aF(x)$ and $\|x\|_{E_1}=\|F(x)\|_{E_2}$ is automatically an isomorphism of Hilbert modules.  That the third property is implied by the fourth is Proposition \ref{prop:2isometries.want.to.be.isomorphisms}.
\end{proof}

The following Proposition collects some known results, valid in the case of stable rank 1:
\begin{Proposition} Let $A$ be a separable C*-algebra of stable rank 1. Then,
\begin{enumerate} 
\item The order relation in the Cuntz semigroup is an inclusion of Hilbert modules, and
\item The equivalence relation in the Cuntz semigroup is  Hilbert module isomorphism.
\item Every element of $\Cu(A)$ is the supremum of a sequence of elements from $W(A),$
\item Every increasing sequence of elements of $\Cu(A)$ has a supremum in $\Cu(A),$ and
\item At the level of ordered monoids, $\Cu(A)$ is the completion of $W(A).$
\end{enumerate}\label{prop.completions}
\end{Proposition}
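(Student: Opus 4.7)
The plan is to present this proposition as an assembly of known results from the literature on Cuntz semigroups, principally \cite{CEI}, together with a brief remark on where the stable rank 1 hypothesis is actually used. No new arguments are required; what follows is essentially a catalogue of citations arranged in the right order.

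For items (1) and (2), I would invoke the Hilbert module picture of $\Cu(A).$ Under stable rank 1, Cuntz comparison of positive elements is controlled by unitary equivalence in $\Mult(A\tensor\compact)$ via a Pedersen-type cancellation argument. Translated to the countably generated Hilbert submodules of $\HA$ that represent elements of $\Cu(A),$ this yields that $E\leqCu F$ holds if and only if $E$ is isomorphic to a Hilbert submodule of $F,$ and correspondingly that $E\sim F$ holds if and only if $E$ and $F$ are isomorphic as Hilbert $A$-modules. Both equivalences are explicit in \cite{CEI} under stable rank 1 and may be cited directly.

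For items (3) and (4), I would appeal to the Coward--Elliott--Ivanescu theorem that $\Cu(A)$ is an object of the category $\Cu$ of abstract Cuntz semigroups \cite{CEI}. Membership in that category already includes, as an axiom, that every increasing sequence has a supremum, which is item (4), and that every element is the supremum of a rapidly increasing sequence of way-below elements. For item (3), it then suffices to identify the way-below elements with the image of $W(A);$ in stable rank 1, this identification follows from item (1) together with the observation that an element of $\Cu(A)$ is way-below if and only if it is represented by a Hilbert submodule that is already contained in some $A^n,$ which is exactly the defining condition for $W(A).$

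Item (5) is then a formal consequence of items (3) and (4) together with the universal property of the sequential completion in the ordered-monoid category: since $\Cu(A)$ is closed under suprema of increasing sequences and $W(A)$ is sup-dense by item (3), the inclusion $W(A)\hookrightarrow\Cu(A)$ realizes $\Cu(A)$ as the completion of $W(A).$ The only real difficulty, as the proposition is explicitly flagged as a compilation, is bookkeeping: one must verify that the stable rank 1 hypothesis is used precisely where needed, namely in items (1), (2), and in the identification of way-below elements with $W(A)$ used for item (3). Items (4) and the bare completion assertion in (5) hold in greater generality but are grouped here under the common hypothesis for convenience.
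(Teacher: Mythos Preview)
Your overall strategy---treating the proposition as a catalogue of citations---matches the paper's, and your citations for items (1), (2), and (4) align with it (Theorem~3 and Theorem~1 of \cite{CEI}, respectively, the paper also noting that (4) does not require stable rank~1).

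The divergence is in items (3) and (5). The paper does not derive these from \cite{CEI} alone: it cites \cite{ABP} (Antoine--Bosa--Perera) directly, obtaining (5) from Theorem~5.1 and Lemma~2.10 there, and then reading off (3) from the very definition of completion (Definition~3.1.iii of \cite{ABP}). Thus the paper's logical order is $(5)\Rightarrow(3)$, the reverse of yours.

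Your route to (3) has a gap. The phrase ``an element of $\Cu(A)$ is way-below'' is ill-formed: way-below is a binary relation $x\ll y$, not a predicate on single elements, so ``identify the way-below elements with the image of $W(A)$'' does not parse as stated. More substantively, even once reformulated, the assertion that the terms of a rapidly increasing sequence approximating a given element can always be chosen from the image of $W(A)$ is precisely the nontrivial content that \cite{ABP} supplies under stable rank~1; it does not drop out of the $\Cu$-category axioms of \cite{CEI} together with item~(1) in the way you suggest. Likewise, your derivation of (5) from (3) and (4) via a universal property of sequential completion is reasonable in outline, but making that universal property precise at the ordered-monoid level is again exactly what \cite{ABP} does, and the paper simply cites it rather than re-deriving it.
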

\begin{proof} Theorem 5.1 and Lemma 2.10 from  \cite{ABP} show that in the stable rank 1 case,  $\Cu(A)$ is the completion of $W(A).$ Definition 3.1. iii of  \cite{ABP} then gives that every element of $\Cu(A)$ is the supremum of a sequence of elements from $W(A).$ That every increasing sequence of elements of $\Cu(A)$ has a~supremum is Theorem 1 in  \cite{CEI}, which incidently does not assume stable rank 1. Theorem  3 in  \cite{CEI} gives the order relation and equivalence relation in the stable rank 1 case.
\end{proof}

\begin{Theorem} In a compact-type C*-algebraic quantum group $A$ that is separable, nuclear, and of stable rank 1, if $M_1,$ $M_2$ and $M_3$ are Hilbert sub-modules of $A$, and $M_1$ is equivalent in the Cuntz semigroup to $M_2,$ then $M_1\boxproduct M_3$ is equivalent in the Cuntz semigroup to  $M_2\boxproduct M_3.$
\label{th:invariant}
\end{Theorem}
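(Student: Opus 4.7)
The plan is to construct explicitly a Hilbert module isomorphism $F\colon M_1\boxproduct M_3 \to M_2\boxproduct M_3$ induced by the given Cuntz equivalence. Since $A$ has stable rank one, Proposition \ref{prop.completions} identifies Cuntz equivalence with Hilbert module isomorphism, so I would fix an $A$-linear bijection $\phi\colon M_1 \to M_2$ preserving the $A$-valued inner product. Noting that $V$ is injective by faithfulness of $f\tensor f$, I would define
\begin{equation*}
F(V(m_1\tensor m_3)) := V(\phi(m_1)\tensor m_3)
\end{equation*}
and extend $\C$-linearly over $V(M_1\tensor M_3)$.

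The verification has two key steps. For the 2-isometric property with respect to $f$, a direct calculation combining the 2-isometry of $V$ at both ends with the fact that the $A$-valued inner products on $M_1$ and $M_2$ coincide via $\phi$ yields $f(\ip{F(V(x))}{F(V(y))}) = f(\ip{V(x)}{V(y)})$ for $x,y\in M_1\tensor M_3$. The harder step is to show that $F$ is right $A$-linear with respect to the standard right action on $\HA$. For this I would invoke the description $Vx = W^{*}x(1\tensor e)$ from the remark after Lemma \ref{lem:special.module}: because $W^{*}$ lies in $\Mult(A\tensor\compact(\H))$, and hence is an adjointable operator on $\HA$, it automatically commutes with the right $A$-action, giving $V(m_1\tensor m_3)\cdot c = V(m_1 c\tensor m_3)$. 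The right $A$-linearity of $\phi$ then transports to right $A$-linearity of $F$.

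With the 2-isometry and $A$-linearity in hand, Proposition \ref{prop:2isometries.want.to.be.isomorphisms} upgrades $F$ to an isometry in the Hilbert module norm, so $F$ extends continuously to $\bar{F}\colon M_1\boxproduct M_3 \to M_2\boxproduct M_3$. Running the same construction with $\phi\inv$ gives a two-sided inverse on dense sets, so $\bar{F}$ is a bijection, and Theorem \ref{th:equivalences.of.maps} then promotes $\bar{F}$ to a Hilbert module isomorphism. A second appeal to Proposition \ref{prop.completions} reinterprets this isomorphism as Cuntz equivalence. The main obstacle is the $A$-linearity step: the algebraic formula for $V$ in the proof of Lemma \ref{lem:special.module} does not visibly respect either the left or right $A$-action on the individual factors of $A\tensor A$, so one genuinely needs to pass through the multiplicative-unitary picture in order to transport the right $A$-action of $\HA$ through $V$; once this is done, the remainder of the argument is a routine application of the Mazur--Ulam-type Theorem \ref{th:equivalences.of.maps}.
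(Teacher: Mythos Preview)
Your proposal is correct and follows essentially the same route as the paper: reduce Cuntz equivalence to Hilbert module isomorphism via stable rank one, conjugate $\phi\tensor\Id$ by $V$, verify the 2-isometric property, check $A$-linearity, and then invoke Proposition~\ref{prop:2isometries.want.to.be.isomorphisms} and Theorem~\ref{th:equivalences.of.maps}. The only real difference is in how the $A$-linearity is justified: the paper simply says ``by the twisted-linear property of $V$,'' whereas you unpack this via the multiplicative-unitary description $V(x)=W^{*}x(1\tensor e)$ with $W^{*}\in\Mult(A\tensor\compact(\H))=\adjointable(\HA)$, so that right $A$-linearity of $W^{*}$ gives $V(m_1\tensor m_3)\cdot c=V(m_1c\tensor m_3)$. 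Your version is more explicit on precisely the point you flagged as the obstacle, but the underlying mechanism is the same one the paper is appealing to.
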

\begin{proof}  The equivalence relation on the Cuntz semigroup can be taken to be, in the special case of stable rank 1, simply isomorphism classes of Hilbert  modules, and the order relation to be inclusion of Hilbert modules. We consider $W:=V(M_1\tensor M_3)$ which is an inner product sub-module of $A\tensor\H$, and  $W':=V(M_2\tensor M_3),$ an inner product sub-module of $A\tensor\H.$ We have, from the hypothesis, that
$M_1\tensor M_3$ is isomorphic as a Hilbert $(A\tensor A)$-module to $M_2\tensor M_3.$ Denoting this isomorphism $F,$ we~compose  with the twisted-linear maps $V\inv$ and $V$ of Lemma \ref{lem:special.module},  obtaining $V\compose F\compose V\inv\colon W\arrow W'.$

Summarizing these considerations in a diagram, we have:
\begin{equation} \begin{diagram}[small]
M_1\tensor M_2  & \lTo^{F} & M_1\tensor M_3 \\
 \dTo^{V} &        &\dTo_{V} \\
{V(M_1\tensor M_2 )} & \lDotsto_{G} & {V(M_1\tensor M_3 )} \\
\end{diagram}\label{diag:inclusions.in.low.rank.case}\end{equation}
where the solid arrows are all  2-isometric maps. Thus, the map $G$ defined by the dotted arrow above is a 2-isometric map. It is an $A$-module map by the twisted-linear property of $V.$  By Proposition~\ref{prop:2isometries.want.to.be.isomorphisms}, this~implies that the map $G$ is isometric with respect to the norm on the inner product modules, which is the norm coming from the enveloping Hilbert module. If we take the closure of  ${V(M_1\tensor M_3 )}$ and  ${V(M_1\tensor M_3 )}$ with respect to the Hilbert module norm on  $\HA,$ the fact that $G$ is an~isometry in this norm allows us to extend $G$ by continuity to an isometry of the closures. We~now have an isometry $G\colon\overline{V(M_1\tensor M_2 )} \arrow  \overline{V(M_1\tensor M_3 )}.$ Theorem \ref{th:equivalences.of.maps} then gives a Hilbert module isomorphism of these Hilbert $A$-modules.
\end{proof}

\begin{Corollary} The operation $\boxproduct$ gives an associative product on the Cuntz semigroup of a separable compact C*-algebraic quantum group with faithful Haar state and stable rank 1. \label{cor:extend.product}\end{Corollary}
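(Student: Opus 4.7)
The plan is to combine Theorem~\ref{th:invariant} with an analogous statement in the second slot to obtain well-definedness of $\boxproduct$ on Cuntz equivalence classes, and then to derive associativity from coassociativity of the coproduct, using Theorem~\ref{th:equivalences.of.maps} at the end to convert the algebraically natural identifications into Hilbert module isomorphisms.

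First I would establish the symmetric counterpart of Theorem~\ref{th:invariant}: if $M_3 \sim M_3'$ and $M_1$ is fixed, then $M_1 \boxproduct M_3 \sim M_1 \boxproduct M_3'$. This is obtained by rerunning the construction of the 2-isometry $V$ in Lemma~\ref{lem:special.module} with the two tensor factors exchanged, or, more compactly, by conjugating $V$ by a tensor flip. Combining this with Theorem~\ref{th:invariant} shows that $\boxproduct$ descends to a well-defined binary operation on Cuntz equivalence classes of Hilbert submodules of $A$.

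For associativity, given submodules $M_1, M_2, M_3$ of $A$, I would compare $(M_1\boxproduct M_2)\boxproduct M_3$ with $M_1\boxproduct(M_2\boxproduct M_3)$ by setting up two iterated applications of $V$. On the algebraic core $A_0\tensor A_0\tensor A_0$, using the explicit formula $V = \flip\compose (r\tensor\Id)\compose(\Id\tensor\antipode\tensor\Id)\compose(\Id\tensor\coproduct{})$ from the proof of Lemma~\ref{lem:special.module}, the two iterations correspond, up to flips and applications of the antipode, to $(\coproduct{}\tensor\Id)\compose \coproduct{}$ and $(\Id\tensor\coproduct{})\compose \coproduct{}$ respectively. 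Coassociativity of $\coproduct{}$ makes these two prescriptions match, producing a twisted-linear, $A$-linear bijection between the two iterated algebraic modules. Since each factor $V$ is 2-isometric with respect to the Haar state, so is the composite; Proposition~\ref{prop:2isometries.want.to.be.isomorphisms} then upgrades it to an isometry in the Hilbert module norm, and Theorem~\ref{th:equivalences.of.maps} delivers a Hilbert module isomorphism between the completions, which is exactly the associativity statement at the level of Cuntz classes.

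The main obstacle is that after one application of $\boxproduct$, the result lives in $\HA$ rather than in $A$, so iterated products must be formulated with some care before the above argument can even be written down. I would address this using Proposition~\ref{prop.completions}: in stable rank $1$, every element of $\Cu(A)$ is the supremum of a sequence of submodules of some $A^n$, so it is enough to handle Hilbert submodules of finite direct sums of $A$, on which the construction of $V$ extends factorwise. Compatibility of $\boxproduct$ with the resulting directed suprema then reduces to norm-continuity of $V$ on the enveloping Hilbert space, which was essentially the content of the extension argument given in the proof of Lemma~\ref{lem:special.module}.
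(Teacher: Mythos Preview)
Your proposal is correct and follows essentially the same route as the paper: well-definedness via Theorem~\ref{th:invariant} (the paper leaves the second-slot version implicit, while you make it explicit), extension to $W(A)$ by tensoring $V$ with matrix identities and then to $\Cu(A)$ via the order-completion in Proposition~\ref{prop.completions}, and associativity from coassociativity of $\coproduct{}$ by exhibiting an $A$-linear 2-isometry between the two iterated products and invoking Theorem~\ref{th:equivalences.of.maps}. Your identification of the ``$\HA$ versus $A$'' obstacle and its resolution through submodules of $A^n$ is exactly how the paper handles it.
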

\begin{proof}Theorem \ref{th:invariant} shows that the operation $\boxproduct$ is well-defined at the level of the Cuntz semigroup, at least for Hilbert C*-modules that are sub-modules of $A.$
In the proof of Theorem \ref{th:invariant}, we replace $A$ by a matrix algebra over $A,$ and $V$ by $1\tensor V,$ obtaining a product on $W(A).$  (We consider  $1\tensor 1\tensor V\colon M_n \tensor M_n \tensor A \tensor A \rightarrow M_{n^2} (A) \tensor H$, and then flip the two middle factors of  the first tensor product, obtaining a map from $M_n (A) \tensor M_n(A)$ to $M_{n^2} (A) \tensor H.$)
 We note that the product operation respects inclusion, in each variable separately. Extension of the product to $\Cu(A)$ thus follows by passage to order-completions, in each variable separately.
In more detail, suppose that $x$ is an element of $W(A)$ and $y$ is an element of $\Cu(A).$ Proposition \ref{prop.completions}.iii gives an increasing sequence $y_n$ of elements of $W(A)$ that has supremum $y.$ Then, $x\boxproduct y_n$ is an increasing sequence of elements of $Cu(A),$ and the supremum of this sequence, which exists by  Proposition \ref{prop.completions}.iv,  defines $x\boxproduct y.$ At the level of Hilbert modules, we~view $x\boxproduct y_n$ as a submodule of $\HA$, and then take the closure of the union of the submodules $ x\boxproduct y_n$ with respect to the Hilbert module norm.
Then, we perform a similar process with respect to the other~variable.

 Associativity, with respect to multiplication, follows  from co-associativity of the co-product.  It is sufficient to consider the case of $W(A).$  If we describe the $A$-modules $M_1\boxproduct (M_2 \boxproduct M_3)$ and $(M_1\boxproduct M_2) \boxproduct M_3$ purely algebraically, in both cases we obtain the same set of algebraic generators but with scalar actions given by $(\Id\tensor\coproduct{})\compose\coproduct{}(a)$  and $(\coproduct{}\tensor\Id)\compose\coproduct{}(a)$ respectively. However, then the co-associativity of the coproduct gives that these are the same module. We thus obtain an $A$-module map from $M_1\boxproduct (M_2 \boxproduct M_3)$ onto $(M_1\boxproduct M_2) \boxproduct M_3,$ and since $V$ is a 2-isometric map, the map obtained is a composition of  2-isometric maps; hence it is itself 2-isometric, and thus a Hilbert module isomorphism (Theorem \ref{th:equivalences.of.maps}.)
\end{proof}

\begin{Remark} Elements of $W(A)$ can always be written, in the operator picture, as diagonal matrices with coefficients in $A.$ The properties of the tensor product mapping $M_n \tensor M_n\rightarrow  M_{n^2}$ used in the above proof give in the case of diagonal matrices:
$\left(\begin{matrix}a&\\&b\end{matrix}\right)\boxproduct\left(\begin{matrix}c&\\&d\end{matrix}\right) = \left(\begin{smallmatrix}a\boxproduct c\\&a\boxproduct d\\&&b\boxproduct c\\&&&b\boxproduct d\end{smallmatrix}\right). $ In an additive notation:  $\left(a+b\right)\boxproduct\left(c+d\right) = a\boxproduct c+a\boxproduct d+b\boxproduct c+b\boxproduct d. $\label{rem:distr}
 \end{Remark}
Thus, we have a distributive property for the product $\boxproduct$ with respect to the additive structure of $W(A),$ and since we extended to $\Cu(A)$ by taking completions at the monoid level, the product on  $\Cu(A)$ inherits this property. We note that the product map with respect to a fixed element $M$ gives a map from $\Cu(A)$ to $\Cu(A),$ defined by $x\mapsto M\boxproduct x,$  that is well-behaved with respect to the semigroup structure. Moreover, the map given by taking a product with a fixed element respects the property of being a sub Hilbert module: in other words, if $x$ is contained in $y$ then $M\boxproduct x$ is contained in $M\boxproduct y.$ 
 When applying the map to an increasing sequence of Hilbert modules, we obtain an increasing sequence of Hilbert modules, which however are not larger than the standard module, $\H_A $.  In the proof of Corollary \ref{cor:extend.product}, see the end of the first paragraph of the proof, it is shown that, at least in the stable rank one case, the product map with respect to a fixed element will preserve the suprema of increasing sequences. Clearly, the product map will preserve the zero of the Cuntz semigroup. Thus, we see that the product map with respect to a fixed element gives a well-behaved mapping of Cuntz semigroups, namely a morphism of Cuntz semigroups in the terminology of \cite{CEI}.  (The general definition of a~morphism does involve compact containment, however, in the stable rank one case, it is not necessary to consider compact containment of Hilbert modules.)

We note that we can obtain a left and a right product structure, one associated with the left Haar state and the other associated with the right Haar state. It is not clear if there exist further variations on the product than this. On the other hand, the fact that we can sometimes, as will be shown, use the product structure to obtain isomorphism results for C*-algebraic quantum groups suggests that the product has a certain canonical status. 

\begin{Remark}  In the finite-dimensional case, the Cuntz semigroup simplifies considerably, and our product reduces to the simpler product considered in \cite{kuce.classify.Hopf.by.Ktheory0}. This product is defined by
$\coproduct{*} (M_1 \otimes M_2) $
where $M_1$ and $M_2$ are algebraic modules over $A$, and $\coproduct{*}$ is the algebraic restriction of rings operation
 induced by the coproduct homomorphism.
This provides an interesting example of the product we~have just defined.\label{rem:multiplication.table} In the finite-dimensional case, the class of the support projection of the co-unit character will act as a unit for the product we~have introduced. In~general, no such support projection need exist. We~will eventually  see, from  Proposition \ref{prop:box.and.convolve} and the properties of convolution, that if $\Id$ is~the multiplicative identity of $A,$ then
$[\Id]\boxproduct [a] $ is equivalent to $\Id$ for all positive~$a$~in~$A.$ \end{Remark}
We have, in this section, added analytic structure to the algebraic restriction of rings operation from Remark \ref{rem:multiplication.table}. Another way to implement a restriction of rings operation at the level of Hilbert modules is via Kasparov's inner tensor product \cite{Kasparov.modules}. This tensor product does not behave well functorially, though in some cases  Hilbert module connections (introduced by Connes and Skandalis~\cite{CS}) can be used instead. The connection technique does not seem applicable in our case.

\section{Higher Stable Rank}\label{Section3}

In this  section, we generalize Theorem \ref{th:invariant} to the case of higher stable rank. As already pointed out, it is desirable for applications to reformulate our results in the operator picture of the Cuntz semigroup, and we do that in this section as well. 
The basic step in passing from the Hilbert module picture to the operator picture of the Cuntz semigroups is that we can  work in terms of generators of Hilbert modules rather than the modules themselves. If we restrict attention to Hilbert modules over $\sigma$-unital C*-algebras, given a Hilbert module, we may in general choose a strictly positive element of the ``compact'' operators upon it as a generator. This fact is often summarized by saying that Hilbert modules over separable or $\sigma$-unital C*-algebras can be taken to be singly generated; see \cite{CEI} (p. 39). To recover the Hilbert module from this strictly positive element, $\ell$, we show that
the element belongs to the compact operators on the Hilbert module $\H_A,$ and then consider $\overline{\ell H_A}.$ There apparently remains a slight gap between the Hilbert module picture of the Cuntz semigroup and the operator picture, in~which the elements of the Cuntz semigroup $\Cu(A)$ lie in $A\tensor\compact.$ The issue is  that in one case, we have ``compact'' operators on $\H_A$, and in the other case, we have elements of $A\tensor\compact.$ However, Kasparov has shown \cite{kasparov1981} (Section 1, Paragraph 14) that these two objects are in fact isomorphic. In the case of sub-Hilbert modules of $A,$ the generator obtained is of course a positive element of the $C^*$-algebra $A,$ and we recover the module by considering the one-sided closed right ideal $\overline{aA}$ generated by the given algebra element, following the conventions in \cite{CEI}. Thus, we have the operator picture of the Cuntz semigroup:  for more information see \cite{CEI} (Appendix 6).

 We say that $a$ is Cuntz subequivalent to $b,$ denoted $a \leqCu b$, if there is a sequence $x_n$ such that $x_n^* b x_n$ goes to $a$ in norm. Thus, for example, $e^*xe\leqCu x.$ We~say that $a$ and $b$ are Cuntz equivalent if we~have both $a \leqCu b$ and $a \geqCu b.$

We recall an equivalence relation on positive elements of a C*-algebra, $A,$ that was considered by Blackadar  \cite{blackadar88}, and is denoted here by $a\blackadar b.$ This equivalence relation is generated by the following two equivalence relations:
 
\begin{enumerate} 
\item positive elements $a$ and $b$ are equivalent in a C*-algebra $A$ if they generate the same hereditary subalgebra of $A,$ and
\item
positive elements $a$ and $b$ are equivalent in a C*-algebra $A$ if there is an element $x\in A$ such that $a=x^*x$ and $b=xx^*.$ \end{enumerate}

The following known Lemma relates the equivalence relation $\blackadar$ on positive elements to properties of the Hilbert submodules of $A$ that are generated by the given positive elements:
\begin{Lemma}[\protect{\cite{ORT} ([Proposition \!4.3])}] Let $A$ be a C*-algebra, and let $a$ and $b$ be positive elements of $A.$ The~following are equivalent:
\begin{enumerate}
 
\item $a\blackadar b,$ and
\item $\overline{aA}$ and $\overline{bA}$ are isomorphic as Hilbert $A$-modules.
\end{enumerate}\label{lem:isos}
\end{Lemma}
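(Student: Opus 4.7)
The plan is to prove the two implications separately. Since isomorphism of Hilbert $A$-modules is plainly an equivalence relation, for (i) $\Rightarrow$ (ii) it suffices to verify the implication on each of the two generating equivalences defining $\blackadar$; the general case then follows by transitivity.

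For the first generating relation---$a$ and $b$ sharing a hereditary subalgebra $B = \overline{aAa} = \overline{bAb}$---I would observe that $\overline{aA} = \overline{BA} = \overline{bA}$ as Hilbert submodules of $A$, since $a \in B$ and $aAa\cdot A \subseteq aA$ give the two inclusions, and symmetrically for $b$. For the second generating relation $a = x^*x$, $b = xx^*$, I would take the polar decomposition $x = u|x|$ in $A^{**}$, so that $u^*u = p_a$ and $uu^* = p_b$ are the support projections of $a$ and $b$, and define $\phi\colon \overline{aA} \arrow \overline{bA}$ by $\phi(z) := uz$. The essential checks are: (a) $uz \in A$ for $z = ay \in aA$, because $ua^{1/2} = x \in A$ and so $uz = xa^{1/2}y$; (b) $u^*uz = z$ for every $z \in \overline{aA}$, because $p_a$ acts as the identity on that right ideal, whence $\ip{\phi(z_1)}{\phi(z_2)} = z_1^*u^*uz_2 = \ip{z_1}{z_2}$; (c) $\phi(\overline{aA}) \subseteq \overline{bA}$, since $uz = xa^{1/2}y$ and $x$ itself lies in $\overline{bA}$ by the standard identity $b^{1/n}x = xa^{1/n} \to x$ in norm. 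A symmetric construction yields $\phi\inv$.

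For (ii) $\Rightarrow$ (i), given an isomorphism $\phi$, I would put $x := \phi(a^{1/2})$, where $a^{1/2} \in \overline{aA}$ is realised as the norm limit of $a\cdot g_n(a)$ for continuous functions $g_n$ vanishing at zero with $tg_n(t) \to t^{1/2}$ uniformly on the spectrum of $a$. Then $x^*x = \ip{x}{x} = \ip{a^{1/2}}{a^{1/2}} = a$, so the second generating relation gives $a \blackadar xx^*$. To close the argument I would show that $xx^*$ and $b$ generate the same hereditary subalgebra. Realising $\phi$ as left multiplication by a partial isometry $v \in A^{**}$ satisfying $v^*v = p_a$, $vv^* = p_b$ (via the canonical identifications $\overline{aA}^{**} \cong p_a A^{**}$ and $\overline{bA}^{**} \cong p_b A^{**}$), one computes $xx^* = va^{1/2}a^{1/2}v^* = vav^*$, whose support projection is $vp_av^* = vv^* = p_b$. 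Hence $xx^*$ and $b$ share the hereditary subalgebra $\overline{bAb}$, so $xx^* \blackadar b$ by the first generating relation, completing the chain $a \blackadar xx^* \blackadar b$.

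The main technical obstacle that I expect is the bookkeeping between $A$ and $A^{**}$: the polar decompositions producing $u$ and $v$ live in the bidual, while all Hilbert module considerations must remain in $A$. The work is in confirming that these partial isometries carry norm-closed right ideals of $A$ into $A$, that identities like $u^*uz = z$ on $\overline{aA}$ are genuine norm identities (obtained by first checking on elements of the form $ay$ and extending by continuity, using that $a^{1/n} \to p_a$ strongly in $A^{**}$ yet $b^{1/n}x \to x$ in norm in $A$), and that the various functional-calculus manipulations are legitimate. Everything ultimately rests on the standard fact that a support projection acts as identity on the corresponding one-sided closed ideal.
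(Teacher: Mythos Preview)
The paper does not give its own proof of this lemma; it is stated with a citation to \cite{ORT} (Proposition~4.3) and no argument is supplied. So there is nothing in the paper to compare your proposal against directly.

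That said, your proof is correct and is essentially the standard argument one finds in the cited source. Two small remarks. First, in (i)$\Rightarrow$(ii), your verification that $a^{1/2}\in\overline{aA}$ via functions $g_n$ is more laborious than needed: simply note $a^{1/2}\in C^*(a)\subseteq\overline{aAa}\subseteq\overline{aA}$. Second, in (ii)$\Rightarrow$(i), representing $\phi$ by a partial isometry $v\in A^{**}$ with $v^*v=p_a$, $vv^*=p_b$ is valid but requires the justification you allude to (normal extension to $p_aA^{**}\to p_bA^{**}$, then $v:=\phi^{**}(p_a)$). A slightly more elementary route avoids the bidual here: from $x=\phi(a^{1/2})$ and $A$-linearity one gets $\phi(ay)=\phi(a^{1/2})a^{1/2}y=xa^{1/2}y$, so surjectivity of $\phi$ gives $\overline{bA}=\overline{xA}$; since $(xx^*)^{1/n}x\to x$ one has $\overline{xA}=\overline{xx^*A}$, and the standard bijection between closed right ideals and hereditary subalgebras then yields $\overline{xx^*Axx^*}=\overline{bAb}$ immediately. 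Either way the argument goes through.
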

A relationship between the equivalence relation $\blackadar$ and the Cuntz equivalence relation is given by  the following known Lemma. We recall that the function  $f(\lambda):=\left(\lambda-\tfrac{1}{n}\right)_{+}$ is the function that is zero on $(-\infty,\tfrac{1}{n})$ and is equal to $\lambda-\tfrac{1}{n}$ elsewhere. The functional calculus for C*-algebras lets us apply this function to a positive or self-adjoint element of a C*-algebra.
\begin{Lemma} Let $A$ be a C*-algebra. The following are equivalent:
\begin{enumerate}
\item  $a \geqCu b,$ and
\item for all $n\in\N,$ we have  $\left(b-\tfrac{1}{n}\right)_{+}\blackadar d_n \in \overline{aAa}.$\item for all $n\in\N,$ we have  $\left(b-\tfrac{1}{n}\right)_{+}\blackadar d_n,$ where $d_n$ has the factorization $f_n(a)a_n f_n(a),$ where $a_n$ is an~element of  $A$ and $f_n$ is some function in $C_0 (0,1],$ bounded above by $1.$
\end{enumerate}\label{lem:from.blackadar.to.cuntz}
  \end{Lemma}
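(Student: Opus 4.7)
The plan is to establish the cyclic chain $(1)\Rightarrow(3)\Rightarrow(2)\Rightarrow(1)$, each step reducing to a standard manipulation of Cuntz-subequivalence data. For $(1)\Rightarrow(3)$, I would invoke R\o rdam's strengthened approximation lemma: if $b\leqCu a$, then for every $\epsilon>0$ there exist $\delta>0$ and $x\in A$ with $(b-\epsilon)_+=x^*(a-\delta)_+x$. Applied at $\epsilon=1/n$, this yields $\delta_n>0$ and $x_n\in A$ such that $(b-1/n)_+=y_n^*y_n$ where $y_n:=(a-\delta_n)_+^{1/2}x_n$. The second generating relation for $\blackadar$ then gives
\[
(b-1/n)_+\;=\;y_n^*y_n\;\blackadar\;y_ny_n^*\;=:\;d_n,
\]
and $d_n=(a-\delta_n)_+^{1/2}(x_nx_n^*)(a-\delta_n)_+^{1/2}$ is visibly of the form $f_n(a)\,a_n\,f_n(a)$ demanded by (3), with $f_n(t):=(t-\delta_n)_+^{1/2}$ and $a_n:=x_nx_n^*$. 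After the harmless rescaling $\|a\|\leq 1$, the function $f_n$ lies in $C_0(0,1]$ and satisfies $\|f_n\|_\infty\leq 1$.

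For $(3)\Rightarrow(2)$, since $f_n\in C_0(0,1]$ vanishes at $0$, the element $f_n(a)$ sits in the hereditary $C^*$-subalgebra $\overline{aAa}$, and hence so does $d_n=f_n(a)\,a_n\,f_n(a)$. For $(2)\Rightarrow(1)$, two standard facts suffice. First, $\blackadar$-equivalent positive elements are Cuntz equivalent: the polar generator $x^*x\blackadar xx^*$ is handled by the norm convergence $z_\epsilon^*(xx^*)z_\epsilon\to x^*x$ with $z_\epsilon:=x(x^*x+\epsilon)^{-1/2}$, while for the ``same hereditary subalgebra'' generator one observes that any positive $c\in\overline{aAa}$ factors as $c=c^{1/2}\cdot c^{1/2}$ with $c^{1/2}\in\overline{aA}$, so writing $c^{1/2}=\lim a z_k$ gives $c=\lim z_k^*a^2z_k$, whence $c\leqCu a^2\sim_{Cu}a$. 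Second, R\o rdam's lemma characterizes $b\leqCu a$ by $(b-1/n)_+\leqCu a$ for all $n$. Combining, $(b-1/n)_+\sim_{Cu}d_n\leqCu a$ for every $n$ promotes to $b\leqCu a$, which is (1).

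The main technical obstacle is the bookkeeping around the class $C_0(0,1]$ and the uniform bound $\|f_n\|_\infty\leq 1$ in (3): one must either rescale $a$ at the outset so that $\sigma(a)\subseteq[0,1]$, or absorb a scalar into $a_n$. Once the normalization convention is fixed, each remaining step is a routine invocation of R\o rdam's approximation lemma or the polar identification $x^*x\blackadar xx^*$, so the proof requires no genuinely new ingredients beyond standard Cuntz semigroup machinery.
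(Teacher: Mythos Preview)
Your argument is correct. The cyclic chain $(1)\Rightarrow(3)\Rightarrow(2)\Rightarrow(1)$ works as written, and each step is justified: R\o rdam's lemma gives the explicit factorization in $(1)\Rightarrow(3)$, the implication $(3)\Rightarrow(2)$ is immediate from $f_n(a)\in\overline{aAa}$, and $(2)\Rightarrow(1)$ follows from the standard facts that $\blackadar$ refines Cuntz equivalence and that positive elements of $\overline{aAa}$ are Cuntz-subequivalent to $a$.

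The paper takes a slightly different route: it cites Kirchberg--R\o rdam directly for the equivalence $(1)\Leftrightarrow(2)$, and then invokes Cohen's factorization theorem to pass between $(2)$ and $(3)$. The point of Cohen factorization here is that $\overline{aAa}$ has an approximate unit of the form $(f(a))$ with $f\in C_0(0,1]$, so any $d_n\in\overline{aAa}$ factors as $f_n(a)\,a_n\,f_n(a)$. Your approach bypasses this entirely: by going $(1)\Rightarrow(3)$ directly via R\o rdam's lemma, the factorization $d_n=(a-\delta_n)_+^{1/2}(x_nx_n^*)(a-\delta_n)_+^{1/2}$ drops out for free, so Cohen's theorem is never needed. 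This makes your argument more self-contained and arguably more elementary, at the cost of writing out the Cuntz-semigroup manipulations that the paper simply cites away. Both routes ultimately rest on the same R\o rdam-type approximation, so the difference is one of packaging rather than substance.
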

\begin{proof} The equivalence of \textit{1} and \textit{2} follows from Lemma 2.4. \textit{iv} in  \cite{kirchberg.rordam2000}. The equivalence of \textit{2} and \textit{3} follows from Cohen's factorization theorem  \cite{cohen}.\end{proof}

We also recall a known continuity property possessed by the Cuntz subequvalence relation:
\begin{Lemma}If $x_n$ converges to $x$ in norm, and $x_n\leqCu y$ then $x\leqCu y.$
\label{lem:cont}\end{Lemma}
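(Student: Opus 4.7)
The plan is to prove this directly from the definition given in the paper by a diagonal argument, without invoking any further machinery such as Kirchberg--R\o{}rdam perturbation lemmas.

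By hypothesis, each $x_n \leqCu y$ means, unpacking the definition, that there exists a sequence $(r^{(n)}_k)_{k\in\N}$ of elements of $A$ such that $(r^{(n)}_k)^* y\, r^{(n)}_k \longrightarrow x_n$ in norm as $k\to\infty$. For each $n$, I would choose an index $k_n$ large enough so that
\[
\bigl\| (r^{(n)}_{k_n})^* y\, r^{(n)}_{k_n} - x_n \bigr\| \;<\; \tfrac{1}{n}.
\]
Setting $s_n := r^{(n)}_{k_n}$, the triangle inequality then gives
\[
\bigl\| s_n^* y\, s_n - x \bigr\| \;\leq\; \bigl\| s_n^* y\, s_n - x_n \bigr\| + \|x_n - x\| \;<\; \tfrac{1}{n} + \|x_n - x\|,
\]
and the right hand side tends to zero by the norm convergence hypothesis $x_n\to x$ together with the choice of $k_n$. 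Therefore $s_n^* y\, s_n \to x$ in norm, which is precisely the definition of $x \leqCu y$.

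There is really no obstacle here: the proof is just a routine diagonal selection, and it does not even require $x$ or $y$ to be positive. The only subtlety is to choose the indices $k_n$ before forming the diagonal sequence, so that the two sources of error (the rate at which $(r^{(n)}_k)^* y r^{(n)}_k$ approaches $x_n$, and the rate at which $x_n$ approaches $x$) can be controlled simultaneously. Everything else follows from the triangle inequality and the definition of $\leqCu$ stated just above in the paper.
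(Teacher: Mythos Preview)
Your proof is correct. The paper does not actually supply a proof of this lemma: it is stated there as a ``known continuity property'' of Cuntz subequivalence and left without argument, so there is nothing to compare against beyond noting that your diagonal-selection argument is exactly the standard way one verifies this fact directly from the definition of $\leqCu$ given in the paper.
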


\begin{Lemma} Given a submodule $M$ of the standard Hilbert module, $\HA,$ not necessarily closed, there exists a~positive element $e$ of the ``compact'' operators on $\HA,$ such that $\overline{eM}=\overline{M},$ where the closure is taken in the topology coming from the Hilbert module norm. \label{lem:weakCC}\end{Lemma}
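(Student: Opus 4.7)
The plan is to replace $M$ by its norm-closure $N := \overline{M}$, a closed Hilbert sub-$A$-module of $\HA$, and then construct $e$ as a positive element of $\compact(\HA)$ whose range lies in $N$ and which is strictly positive when regarded as an element of $\compact(N)$. The key reduction is that, since $e$ will be bounded and $M$ is norm-dense in $N$, one has $\overline{eM}=\overline{eN}$ automatically; so the whole problem boils down to producing $e\in\compact(\HA)^{+}$ with $\overline{eN}=N$.

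First I would exploit separability. Since $A$ is separable, $\HA$ is norm-separable, so there is a countable sequence $\{x_n\}_{n\ge 1}\subseteq M$ that is norm-dense in $N$ and satisfies $\|x_n\|\le 1$. Setting
\[
e \;:=\; \sum_{n\ge 1} 2^{-n}\,\theta_{x_n,x_n}, \qquad \theta_{x,y}(z):=x\,\ip{y}{z},
\]
yields a norm-convergent series of positive rank-one adjointable operators; the sum defines a positive element of $\compact(\HA)$ whose range is contained in $N$ (as each summand has range in $\overline{x_n A}\subseteq N$).

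Next I would verify that $e$, restricted to $N$, is strictly positive in $\compact(N)$. Suppose $\varphi$ is a state of $\compact(N)$ with $\varphi(e)=0$. Then $\varphi(\theta_{x_n,x_n})=0$ for every $n$; since $\|\theta_{x,x}-\theta_{y,y}\|\le(\|x\|+\|y\|)\|x-y\|$, the map $x\mapsto\varphi(\theta_{x,x})$ is norm-continuous on $N$, and density of $\{x_n\}$ forces it to vanish identically on $N$. By the usual spanning property $\compact(N)=\overline{\operatorname{span}}\,\{\theta_{x,y}:x,y\in N\}$ together with the Cauchy--Schwarz inequality $|\varphi(\theta_{x,y})|^2\le\varphi(\theta_{x,x})\varphi(\theta_{y,y})$, this yields $\varphi=0$, a contradiction.

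Finally, strict positivity of $e$ in $\compact(N)$ gives $\overline{e\,\compact(N)}=\compact(N)$, so in particular $e^{1/k}\in\overline{e\,\compact(N)}$ for every $k$; combined with the standard non-degeneracy of the $\compact(N)$-action on $N$, the approximate unit $(e^{1/k})_{k}$ satisfies $e^{1/k}z\to z$ in norm for every $z\in N$. Since $e^{1/k}z$ lies in $\overline{eN}$ (being a norm limit of elements of the form $e\cdot a\cdot z$ with $a\in\compact(N)$), this forces $z\in\overline{eN}$ and hence $\overline{eN}=N$, so $\overline{eM}=\overline{M}$ by the first paragraph. I expect the main obstacle to be the strict-positivity verification in the third paragraph, since it is there that one has to genuinely use that the generating sequence $\{x_n\}$ was chosen dense in $N$ rather than merely in some subset; once strict positivity is in hand the rest is standard functional-calculus manipulation.
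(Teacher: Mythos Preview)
Your proof is correct. Both your argument and the paper's hinge on producing a strictly positive element of $\compact(\overline{M})$ and using it as the desired $e$; the difference lies in how that element is obtained and how it is seen to live in $\compact(\HA)$.

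The paper argues abstractly: since $\HA$ is countably generated, so is $\overline{M}$, hence $\compact(\overline{M})$ is $\sigma$-unital and admits a strictly positive element; it then invokes Kasparov's stabilization theorem to realize $\overline{M}$ as a direct summand of $\HA$, which yields the inclusion $\compact(\overline{M})\subseteq\compact(\HA)$ needed to regard that element as a ``compact'' operator on $\HA$. Your approach is more hands-on and avoids the stabilization theorem entirely: by building $e=\sum 2^{-n}\theta_{x_n,x_n}$ from rank-one operators on $\HA$ with legs in $\overline{M}$, you get $e\in\compact(\HA)$ for free, and then verify strict positivity in $\compact(\overline{M})$ directly via the state argument. (Your Cauchy--Schwarz inequality $|\varphi(\theta_{x,y})|^2\le\varphi(\theta_{x,x})\varphi(\theta_{y,y})$ is indeed valid; it follows from the positivity of $t\mapsto\varphi(\theta_{x+ty,x+ty})$ as a real quadratic, after a phase rotation of $y$.) The trade-off is length for self-containment: the paper's proof is a three-line appeal to standard structural results, while yours is elementary but requires the explicit strict-positivity check and the approximate-unit manipulation at the end.
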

\begin{proof} Since $\HA$ is countably generated, so is $\overline{M}.$ Therefore, the ``compact'' operators $\mathcal K_{\overline{M}}$ are $\sigma$-unital, and a strictly positive element of $\mathcal K_{\overline{M}}$ will have the desired property, except that it is apparently not an element of  $\mathcal K_{\HA}.$ However, Kasparov's stabilization theorem \cite{Kasparov.modules} implies that $\overline{M}$ is, after unitary equivalence, a direct summand in  ${\HA}$ and thus that $\mathcal K_{\overline{M}}\subseteq \mathcal K_{\HA}.$\end{proof}
Observe that in the proof of Theorem \ref{th:invariant}, the hypothesis of stable rank 1 is used only to simplify the equivalence relation in the Cuntz semigroup to  Hilbert module isomorphism. If we instead state the Theorem in terms of isomorphism of Hilbert modules, we can drop the hypothesis of stable rank 1.
Thus, Lemma  \ref{lem:isos} and Theorem  \ref{th:invariant} imply the Corollary:

\begin{Corollary} In a compact-type C*-algebraic quantum group $A$ that is separable and  nuclear, with a faithful Haar state, if $a,$ $a'$ and $b$ are positive elements of $A$, and $a\blackadar a',$ then a positive operator generating the Hilbert $A$-module $[a]\boxproduct [b]$ is equivalent under $\blackadar$ to any positive operator generating the Hilbert $A$-module  $[a']\boxproduct[b].$
\label{cor:invariant.blackadar} \end{Corollary}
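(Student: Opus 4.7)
Plan: I would deduce this corollary almost directly from Lemma~\ref{lem:isos} together with the arguments already contained in the proof of Theorem~\ref{th:invariant}, once one notices that the stable rank one hypothesis there was used only to convert Cuntz equivalence into Hilbert module isomorphism and back again.

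First I would use Lemma~\ref{lem:isos} to convert the hypothesis $a\blackadar a'$ into a genuine Hilbert $A$-module isomorphism $F\colon\overline{aA}\arrow\overline{a'A}$. Writing $M_1:=\overline{aA}$, $M_2:=\overline{a'A}$, and $M_3:=\overline{bA}$, the goal reduces to producing a Hilbert module isomorphism $M_1\boxproduct M_3\isom M_2\boxproduct M_3$, for then a second application of Lemma~\ref{lem:isos}, this time in the reverse direction, shows that any two positive generators of these isomorphic Hilbert modules must be $\blackadar$-equivalent, which is exactly the stated conclusion.

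Next I would follow the diagram in the proof of Theorem~\ref{th:invariant} verbatim. Starting from the $(A\tensor A)$-linear Hilbert module isomorphism $F\tensor\Id_{M_3}\colon M_1\tensor M_3\arrow M_2\tensor M_3$, I would conjugate by the twisted-linear 2-isometry $V$ of Lemma~\ref{lem:special.module} to obtain the map
\[
G:=V\compose(F\tensor\Id_{M_3})\compose V\inv\colon V(M_1\tensor M_3)\arrow V(M_2\tensor M_3).
\]
As a composition of 2-isometric maps, $G$ is itself 2-isometric, and the twisted-linear property $V(\coproduct{}(a)x)=aV(x)$ makes $G$ an $A$-module map in the diagonal action. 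Proposition~\ref{prop:2isometries.want.to.be.isomorphisms} then upgrades the 2-isometry property to an isometry with respect to the inner product module norms inherited from $\HA$, and so $G$ extends by continuity to an isometric $A$-module map from the closure $M_1\boxproduct M_3$ onto (a dense subset of, hence all of) $M_2\boxproduct M_3$. A final appeal to Theorem~\ref{th:equivalences.of.maps} promotes this surjective isometry to a genuine Hilbert $A$-module isomorphism.

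There is no substantive obstacle here: the entire analytic content sits in the apparatus of Section~\ref{Section2}, and the only real point of caution is the observation, already flagged in the paragraph preceding the corollary, that stable rank one was invoked in Theorem~\ref{th:invariant} purely to identify Cuntz equivalence with Hilbert module isomorphism. Since Lemma~\ref{lem:isos} performs the corresponding identification for $\blackadar$-equivalence without any stable rank hypothesis, the whole argument goes through for positive elements $a,a',b$ of an arbitrary separable nuclear compact-type C*-algebraic quantum group with faithful Haar state.
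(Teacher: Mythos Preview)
Your proposal is correct and follows essentially the same route as the paper: the paper's proof consists of the single observation (made in the paragraph immediately preceding the corollary) that the stable rank~1 hypothesis in Theorem~\ref{th:invariant} was used only to identify Cuntz equivalence with Hilbert module isomorphism, so that Lemma~\ref{lem:isos} combined with the argument of Theorem~\ref{th:invariant} yields the result. You have simply spelled out in full the diagram-chase that the paper leaves implicit.
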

The above Corollary says that the product operation is compatible with the equivalence relation~$\blackadar$. We now extend the Corollary to the Cuntz equivalence relation, which is in general a coarser equivalence relation than $\blackadar.$

\begin{Corollary} In a compact-type C*-algebraic quantum group $A$ that is separable and  nuclear, with a faithful Haar state, if $a,$ $a'$ and $b$ are  positive elements of $A$,  and $a$ is Cuntz equivalent to $a',$ then $[a]\boxproduct [b]$ is Cuntz equivalent to $[a']\boxproduct[b].$
\label{cor:invariant.higher.Cuntz.equiv} \end{Corollary}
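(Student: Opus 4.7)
The plan is to reduce by symmetry to a single subequivalence, and then combine the R{\o}rdam-type characterization of Cuntz subequivalence in Lemma~\ref{lem:from.blackadar.to.cuntz} with the Blackadar-level compatibility already established in Corollary~\ref{cor:invariant.blackadar}, finishing with a supremum argument that passes from $(a-\tfrac{1}{n})_+$ to $a$. It is enough to prove that $a\leqCu a'$ implies $[a]\boxproduct[b]\leqCu[a']\boxproduct[b]$; the opposite inequality follows by interchanging the roles of $a$ and $a'$.

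Applying Lemma~\ref{lem:from.blackadar.to.cuntz} to $a'\geqCu a$, one obtains, for each $n\in\N$, an element $d_n=f_n(a')a'_nf_n(a')$ with $(a-\tfrac{1}{n})_+\blackadar d_n$ and $d_n\in\overline{a'Aa'}\subseteq\overline{a'A}$. Hence $\overline{d_nA}$ is a Hilbert submodule of $\overline{a'A}$. Corollary~\ref{cor:invariant.blackadar} then gives that a generator of $[(a-\tfrac{1}{n})_+]\boxproduct[b]$ is $\blackadar$-equivalent to a generator of $[d_n]\boxproduct[b]$, which by Lemma~\ref{lem:isos} amounts to a Hilbert $A$-module isomorphism of these two submodules of $\HA$. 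The monotonicity of $\boxproduct$ in each variable, noted in the paragraph following Remark~\ref{rem:distr} and immediate from Definition~\ref{def:boxproduct} since $V$ preserves containment of inner product submodules, gives $[d_n]\boxproduct[b]\subseteq[a']\boxproduct[b]$. Chaining the isomorphism and the containment yields $[(a-\tfrac{1}{n})_+]\boxproduct[b]\leqCu[a']\boxproduct[b]$ at the level of generators, uniformly in $n$.

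For the limit, I would identify $[a]\boxproduct[b]$ with $\sup_n[(a-\tfrac{1}{n})_+]\boxproduct[b]$ in $\Cu(A)$. Since $\overline{aA}$ is the closure of the increasing union $\bigcup_n\overline{(a-\tfrac{1}{n})_+A}$, the inner product submodules $\overline{(a-\tfrac{1}{n})_+A}\tensor\overline{bA}$ have dense union in $\overline{aA}\tensor\overline{bA}$; applying the 2-isometry $V$ from Lemma~\ref{lem:special.module} and closing in the Hilbert module norm on $\HA$ as prescribed by Definition~\ref{def:boxproduct}, one obtains $[a]\boxproduct[b]=\overline{\bigcup_n\,[(a-\tfrac{1}{n})_+]\boxproduct[b]}$. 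Existence of the corresponding supremum in $\Cu(A)$ is Proposition~\ref{prop.completions}.iv, and the standard property that $\leqCu$ is closed under suprema of increasing sequences with a common upper bound then yields $[a]\boxproduct[b]\leqCu[a']\boxproduct[b]$.

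The main obstacle is this last step, the identification of $[a]\boxproduct[b]$ as the supremum of the $[(a-\tfrac{1}{n})_+]\boxproduct[b]$: beyond stable rank one, Cuntz equivalence is strictly coarser than $\blackadar$, so Corollary~\ref{cor:invariant.blackadar} does not descend mechanically to $\Cu$. One has to exploit the specific description of $\boxproduct$ as a closure of $V(\,\cdot\tensor\,\cdot\,)$ together with the isometric behaviour of $V$ on the relevant sub-modules supplied by Lemma~\ref{lem:special.module}, which is precisely what allows the closure operation to commute with the approximation of $a$ by $(a-\tfrac{1}{n})_+$.
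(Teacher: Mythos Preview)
Your first two paragraphs match the paper's argument exactly: reduce to a single subequivalence, invoke Lemma~\ref{lem:from.blackadar.to.cuntz} to produce $d_n\in\overline{a'Aa'}$ with $(a-\tfrac1n)_+\blackadar d_n$, then combine Corollary~\ref{cor:invariant.blackadar} with the monotonicity of $\boxproduct$ to obtain $[(a-\tfrac1n)_+]\boxproduct[b]\leqCu[a']\boxproduct[b]$ for every $n$.

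The divergence is entirely in the limiting step, and the gap you yourself flag is real. Your argument needs $[a]\boxproduct[b]=\overline{\bigcup_n\,[(a-\tfrac1n)_+]\boxproduct[b]}$ in the $\HA$-norm, and you justify this by density of $\bigcup_n\overline{(a-\tfrac1n)_+A}\tensor\overline{bA}$ in $\overline{aA}\tensor\overline{bA}$ followed by an application of $V$. But the 2-isometry property from Lemma~\ref{lem:special.module} gives $V$ continuity only with respect to the Hilbert \emph{space} norm on $\H\tensor\H$, which is dominated by---not equivalent to---the Hilbert \emph{module} norm on $\HA$. So density in $A\mintensor A$ does not obviously survive passage through $V$ to the $\HA$-closure, and the phrase ``isometric behaviour of $V$'' does not close this: $V$ is not $A$-linear, so Proposition~\ref{prop:2isometries.want.to.be.isomorphisms} does not apply to it.

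The paper sidesteps this continuity question with an operator-theoretic device you did not anticipate. Using Lemma~\ref{lem:weakCC}, it chooses a positive $e\in\compact(\HA)$ with $\overline{e\cdot([a]\boxproduct[b])}=[a]\boxproduct[b]$, and sets $x_\varepsilon:=eV\bigl((a-\varepsilon)_+\tensor b\bigr)V^{-1}e$. Each $x_\varepsilon$ is a positive element of $\compact(\HA)\cong A\tensor\compact$, is Cuntz subequivalent to $[(a-\varepsilon)_+]\boxproduct[b]$ and hence to $[a']\boxproduct[b]$, and---because $V$ extends to a unitary on $\H\tensor\H$ and $e$ is fixed and bounded---depends norm-continuously on $\varepsilon$. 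Lemma~\ref{lem:cont} then gives $x_0=eV(a\tensor b)V^{-1}e\leqCu[a']\boxproduct[b]$, and $x_0$ generates $[a]\boxproduct[b]$ by the choice of $e$. The compact cut-down is precisely what converts the awkward module-level density you need into ordinary norm convergence of operators, where Lemma~\ref{lem:cont} applies directly.
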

\begin{proof}
Suppose that $a'\geqCu a.$  We will show that $[a']\boxproduct [b] \geqCu [a]\boxproduct [b].$
By Lemma \ref{lem:from.blackadar.to.cuntz}
we have, for $\varepsilon>0,$ that $\left(a-\varepsilon\right)_{+}\blackadar d_n,$ where $d_n$ is in the hereditary subalgebra within $A$ generated by $a'.$ At the level of Hilbert modules, $[d_n]$ is thus a submodule of the Hilbert module $[a']$ generated by $a'.$
Corollary \ref{cor:invariant.blackadar}, together with the already established fact that the product $\boxproduct$ respects (in each variable separately) the ordering of Hilbert modules given by inclusion, now implies that  $[\left(a-\varepsilon\right)_{+}]\boxproduct [b] $ is isomorphic to a~submodule of $[a']\boxproduct [b].$ However, then
\begin{equation}[\left(a-\varepsilon\right)_{+}]\boxproduct [b]\leqCu [a']\boxproduct [b].
\label{inequality1}
\end{equation}

We now aim to use Lemma \ref{lem:cont} to deduce from the above that $[a]\boxproduct [b]\leqCu [a']\boxproduct [b].$

According to Definition \ref{def:boxproduct}, the product $[a] \boxproduct [b]$ is the closure of $V(a\tensor b)(A\tensor A)$ in $\mathcal H _A .$ Lemma \ref{lem:weakCC} lets us insert a positive element $e\in \mathcal K (\mathcal H _A ),$ obtaining   $\overline{eV(a\tensor b)(A\tensor A)}.$
It follows that the Hilbert module $[a]\boxproduct [b]$ is generated by
the positive operator
$eV(a \tensor b)V\inv e \in \mathcal K ( \HA ).$ (The positivity comes from the fact that $V$ extends to a unitary.) Replacing $a$ by $(a-\varepsilon)_+$ we consider $$x_\varepsilon = eV((a-\varepsilon)_+ \tensor b)V\inv e.$$

For each $\varepsilon>0$, this is a positive operator in $\mathcal K (\HA )$
that   is Cuntz subequivalent 
to $[\left(a-\varepsilon\right)_{+}]\boxproduct [b].$ 
By Equation \eqref{inequality1}, this is in turn Cuntz subequivalent to  $[a']\boxproduct [b].$

The operator $x_\varepsilon$ depends norm-continuously on $\varepsilon.$ Applying Lemma \ref{lem:cont} then shows that the limit of the $x_\varepsilon$ is Cuntz subequivalent to $[a']\boxproduct [b].$ Thus, $[a]\boxproduct [b] \leqCu [a']\boxproduct [b].$ This shows that the product respects Cuntz subequivalence. The other subequivalence follows similarly, and thus the product respects Cuntz equivalence.
\end{proof}
The above Corollary shows that we have a well-defined product operation on elements of the Cuntz semigroup that come from elements of $A$.  We  extend to the stabilization, as expected:

\begin{Corollary} The operation $\boxproduct$ gives a product on the Cuntz semigroup $\Cu(A)$ of a separable compact-type C*-algebraic quantum group $A$ with faithful Haar state. The product distributes over finite sums of elements of~$W(A).$ \label{cor:existsProduct}
\end{Corollary}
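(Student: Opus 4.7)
The plan is to extend the product from Corollary \ref{cor:invariant.higher.Cuntz.equiv}, which is already defined at the level of Cuntz classes of positive elements of $A,$ first to $W(A)$ and then to the full semigroup $\Cu(A).$

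To promote the product from elements of $A$ to matrices $M_n(A),$ I would amplify the 2-isometry $V$ exactly as in the stable-rank-one argument of Corollary \ref{cor:extend.product}: consider the amplified map $1\tensor 1\tensor V\colon M_n\tensor M_n\tensor A\tensor A\arrow M_{n^2}(A)\tensor\H$ and flip the two middle tensor factors to obtain a map $M_n(A)\tensor M_n(A)\arrow M_{n^2}(A)\tensor\H.$ Since this amplified map retains the twisted-linear and 2-isometric properties of $V,$ the proofs of Theorem \ref{th:invariant} and Corollary \ref{cor:invariant.higher.Cuntz.equiv} transfer verbatim and deliver a well-defined associative product on $W(A).$ Distributivity on $W(A)\times W(A)$ is then the diagonal-matrix identity recorded in Remark \ref{rem:distr}.

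The extension to $\Cu(A)$ rests on the observation that, by Definition \ref{def:boxproduct}, the product is manifestly monotone in each variable with respect to inclusion of sub-modules, and this monotonicity is inherited by the $W(A)$-level product. Given $x\in \Cu(A)$ and $y\in W(A),$ I would write $x$ as the supremum in $\Cu(A)$ of an increasing sequence $(x_n)$ of elements of $W(A)$ — arising from $\epsilon$-cutoffs of truncations of a positive representative of $x$ in $A\tensor\compact$ — and set $x\boxproduct y := \sup_n (x_n \boxproduct y),$ the supremum on the right existing in $\Cu(A)$ by Proposition \ref{prop.completions}.iv together with monotonicity. Iterating the construction in the second variable yields a product defined on all of $\Cu(A).$

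The main obstacle is verifying that this definition is independent of the approximating sequence, equivalently, that the extended product is continuous with respect to suprema of increasing sequences in each argument. This is the higher-stable-rank analogue of the completion-level argument at the end of the proof of Corollary \ref{cor:extend.product}. Because Proposition \ref{prop.completions}.iii — which realizes $\Cu(A)$ as an order completion of $W(A)$ — is only available under stable rank one, one must argue more directly using monotonicity of $\boxproduct$ and the availability of suprema from Proposition \ref{prop.completions}.iv. The resulting argument is a routine adaptation. Once continuity in each variable is in hand, distributivity on $\Cu(A)$ over finite sums from $W(A)$ follows by pushing the defining suprema through the finite sum in the opposite slot, reducing the claim to the distributive law on $W(A)\times W(A)$ already obtained from Remark \ref{rem:distr}.
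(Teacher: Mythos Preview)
Your approach is correct but takes a longer path than the paper's. The paper bypasses the two-stage extension (first to $W(A)$, then complete to $\Cu(A)$) by stabilizing in one step: it replaces $A$ by $\compact\tensor A$ and $V$ by $1\tensor V$, and then reruns the argument of Corollary~\ref{cor:invariant.higher.Cuntz.equiv} directly in the stabilized algebra. Since $\Cu(A)$ is by definition built from positive elements of $\compact\tensor A$, this yields the product on all of $\Cu(A)$ at once, and associativity and distributivity are read off from Corollary~\ref{cor:extend.product} and Remark~\ref{rem:distr} without any completion step.

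Your route through $W(A)$ and suprema does work, but the step you call a ``routine adaptation'' --- independence of the chosen approximating sequence --- is where the content lies, and it genuinely requires the $\Cu$-category machinery (compact containment, the interleaving property for rapidly increasing sequences) rather than just monotonicity and Proposition~\ref{prop.completions}.iv. You are right that Proposition~\ref{prop.completions}.iii as stated uses stable rank one, and your workaround via $\varepsilon$-cutoffs of a representative in $A\tensor\compact$ is the correct substitute. The paper's stabilization trick simply sidesteps all of this: once Corollary~\ref{cor:invariant.higher.Cuntz.equiv} holds for positive elements of $\compact\tensor A$, there is nothing left to extend. What your approach buys is an explicit description of the product as a limit of $W(A)$-level products, which could be useful elsewhere; what the paper's approach buys is brevity.
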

\begin{proof} Corollary \ref{cor:invariant.higher.Cuntz.equiv} shows that the operation $\boxproduct$ respects the Cuntz equivalence relation and subequivalence relation, at least when restricted to submodules of $A.$
Replacing $A$ by the stabilization of $A$ lets us obtain unrestricted elements of the Cuntz semigroup. In the proof of Corollary \ref{cor:extend.product}, we~replace $A$ by $\compact\tensor A$ and $V$ by $1\tensor V,$ obtaining a product on $\Cu(A).$ Associativity follows as in that proof, and distribution over finite sums follows as in Remark \ref{rem:distr} \end{proof}

\section{Hopf Algebra Maps from Cuntz Semigroup Maps: The Operator Picture and the Open Projection Picture}\label{Section4}

In this section, we show that algebra isomorphisms that induce maps on the Cuntz semigroup which respect the product, are in fact Hopf algebra isomorphisms or co-anti-isomorphisms.
Algebra maps that induce maps on the Cuntz semigroup that respect the product $\boxproduct$ will be called \textit{\/$\fancyK$-co-multiplicative maps.}
There is a Fourier transform on $A$ --- see  \cite{kahng} (Definition 3.4) --- that can be defined as $\F(b):=[\haar(\cdot b)\tensor\Id] V ,$ where $V$ is a multiplicative unitary coming from the left regular representation, and $\haar$ is the (extension of the) left Haar state.  In terms of  linear functionals, this  Fourier transform is given by, following  Van Daele \cite{VanDaele1994}, see also  \cite{kahng}, by
$\beta(a,\F(b))=\haar(ab),$ where  the elements $a$ and $b$ belong to a C*-algebraic quantum group $A,$ $\haar$ is the left Haar weight,  and $\beta(\cdot\,,\cdot)$ is the pairing with the dual algebra.
Following \cite[]{kahng} (Definition 3.10) and the discussion there, a well-behaved  operator-valued convolution product, $\convolve,$ can be defined by the  property $\F(a\convolve b)=\F(a)\F(b),$ where $a$ and $b$ are elements of a C*-algebraic quantum group $A,$ and $\F$ is the (unbounded) Fourier transform defined previously.
The convolution operation takes a pair of positive operators to a positive operator.

 We now assume tracial Haar state in order to establish a relationship between the operations $\boxproduct$ and~$\convolve.$ 

At the start of Section \ref{Section3}, there is a discussion showing  that elements of the Cuntz semigroup $\Cu(A)$ can be (non-uniquely) given as ``compact'' operators (in $A\tensor\compact$) which map $A\tensor\H$ to $A\otimes \H.$ The next Proposition gives a  choice of such an operator for the product $[\ell_1]\boxproduct[\ell_2]$.

\begin{Proposition} Let $A$ be a compact-type C*-algebraic quantum group with faithful tracial Haar state.
Let~$\ell_1$ and $\ell_2$ be positive elements of $A.$
We may represent the product $[\ell_1]\boxproduct[\ell_2]$ of Theorem \ref{th:invariant}, in the operator picture of the Cuntz semigroup, by the positive operator $V(\ell_1\tensor \ell_2)V\inv\colon A\tensor\H\arrow A\tensor\H.$ (This operator is ``compact'' in the usual Hilbert module sense.)
 We have $(\Id\tensor t) V(\ell_1\tensor\ell_2)V\inv=\ell_1 \convolve \ell_2$ for all positive $\ell_i\in A,$ where  $t$ is the standard trace on $\compact.$\label{prop:box.and.convolve}
\end{Proposition}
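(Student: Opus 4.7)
The plan is to establish the two claims of the Proposition separately.

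First, for the operator representation of $[\ell_1]\boxproduct[\ell_2]$, I would begin by observing that since $V$ extends to a unitary on $\HtH$ by the 2-isometric property of Lemma~\ref{lem:special.module}, the operator $V(\ell_1\tensor\ell_2)V\inv$ is positive. To show it restricts to a ``compact'' positive operator on $\HA \subseteq \HtH$, I would use the twisted-linear property $V(\coproduct{}(a)x) = aV(x)$, equivalently $V\inv(a\tensor 1)V = \coproduct{}(a)$ on the dense domain $A\tensor A$. This shows $V\inv$ densely maps $\HA$ onto $A\tensor A$; since $(\ell_1\tensor\ell_2)$ preserves $A\tensor A$ and sends it into $\ell_1 A\tensor \ell_2 A$, and $V$ maps this back into $\HA$, the operator $V(\ell_1\tensor\ell_2)V\inv$ is a well-defined positive map on $\HA$. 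Its range, closed in the $\HA$-norm, is the closure of $V(\ell_1 A \tensor \ell_2 A)$, which by Definition~\ref{def:boxproduct} is exactly $[\ell_1]\boxproduct[\ell_2]$. Right $A$-linearity follows by applying the twisted-linear property to right multiplications on $A\tensor A$, which transports to right multiplication on $\HA$ through $V$.

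Second, for the convolution identity, my approach is to use the Fourier characterization $\F(a\convolve b) = \F(a)\F(b)$ together with $\F(b) = (\haar(\cdot b)\tensor\Id)V$ and the faithfulness of $\haar$. Specifically, I would reduce the claim to checking, for every $c\in A$,
\begin{equation*}
\haar\bigl(c \cdot (\Id\tensor t)V(\ell_1\tensor\ell_2)V\inv\bigr) \;=\; \haar\bigl(c(\ell_1\convolve\ell_2)\bigr).
\end{equation*}
The right-hand side can be unpacked, using the standard formula for convolution in a compact quantum group, as a double slice $(\haar\tensor\haar)$ of an expression involving $\coproduct{}(\ell_1)$, $\ell_2$, and~$c$. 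The left-hand side, by Fubini, becomes $(\haar(c\,\cdot)\tensor t)V(\ell_1\tensor\ell_2)V\inv$.

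Third, the main computation then exploits the tracial hypothesis on $\haar$ to identify the semifinite trace $t$ on $\compact(\H)$ with a ``second copy'' of the Haar state through the GNS identification $\H = L^2(A,\haar)$: on rank-one operators $|\Lambda(a)\rangle\langle\Lambda(b)|$ coming from $A$, one has $t(|\Lambda(a)\rangle\langle\Lambda(b)|) = \haar(b^*a)$. Combined with the identity $V\coproduct{}(a) = (a\tensor 1)V$ from Lemma~\ref{lem:special.module}, which converts the $V(\ell_1\tensor\cdot)V\inv$ on the outside into a coproduct of $\ell_1$ on the inside, the slice-map computation reduces via coassociativity of $\coproduct{}$ to match the right-hand side.

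The main obstacle is the careful bookkeeping of the various conventions linking $V$, $\coproduct{}$, $\F$, $\convolve$, and~$t$. The traciality of $\haar$ is essential here, since otherwise the identification of $t$ with $\haar$ in the GNS picture would carry modular corrections involving the Tomita operator, and the convolution would not be given by so clean a formula. Once the conventions are aligned, the identity ultimately rests on the pentagon equation for the multiplicative unitary $V$, or equivalently on coassociativity of $\coproduct{}$ together with the twisted-linear property.
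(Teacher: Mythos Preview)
Your overall strategy matches the paper's: for the first claim, use that $V$ extends to a unitary on $\HtH$ together with $V^{-1}(A\tensor\H)=A\tensor A$ to identify the range of $V(\ell_1\tensor\ell_2)V^{-1}$ with the box product; for the second, test against $\haar(c\,\cdot)$ for arbitrary $c\in A$ and reduce to the Fourier-transform identity $\F(L)=\F(\ell_1)\F(\ell_2)$, using that $t$ coincides with the extended Haar state in the GNS picture.

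There are, however, two genuine gaps in the details. First, you never actually establish that $V(\ell_1\tensor\ell_2)V^{-1}$ is ``compact'' on $\HA$. The paper's argument is that since $\haar$ is tracial, each $\ell_i$ is trace class in the GNS representation, hence $V(\ell_1\tensor\ell_2)V^{-1}$ is trace class on $\HtH$, and this forces it into $\mathcal K(\HA)\cong A\tensor\compact$. This is where the tracial hypothesis first enters; your argument for the first claim does not use it and does not reach compactness.

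Second, your third paragraph misapplies the twisted-linear identity. The relation $(a\tensor 1)V=V\coproduct{}(a)$ is used with $a=c$, the \emph{test element}, not with $\ell_1$: it turns $(\haar(c\,\cdot)\tensor\haar)\bigl(V(\ell_1\tensor\ell_2)V^{-1}\bigr)$ into $(\haar\tensor\haar)\bigl(V\coproduct{}(c)(\ell_1\tensor\ell_2)V^{-1}\bigr)$. Traciality of $\haar\tensor\haar$ then removes the conjugation by $V$, leaving $(\haar\tensor\haar)\bigl(\coproduct{}(c)(\ell_1\tensor\ell_2)\bigr)$. The final reduction is not coassociativity but the pairing duality $\beta(\coproduct{}(c),x\tensor y)=\beta(c,xy)$ combined with the Fourier definition $\haar(cb)=\beta(c,\F(b))$, giving $\beta(c,\F(\ell_1)\F(\ell_2))$. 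Neither the pentagon equation nor coassociativity is invoked; the twisted-linear property plus traciality plus the pairing already suffice.
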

\begin{proof} Since ${\ell_1 A} \boxproduct {\ell_2 A}$ is defined to be the closure of $V(\ell_1\tensor\ell_2)(A\tensor A),$ and since $V\inv (A\tensor\H)=A\tensor A$, we have that  ${\ell_1} \boxproduct {\ell_2}$ is the closure of the (pre) Hilbert module $V(\ell_1\tensor\ell_2)V\inv (A\tensor\H).$

Now suppose that the Haar state is tracial.
  Regarding $\ell_i$ as being represented on the GNS Hilbert space coming from the Haar state on $A,$ we note that the Haar state extends to the canonical (unbounded) trace on $\BH,$ and that $\ell_i$ is then of trace class. We will denote both the Haar state and its extension to $\BH$ by $\haar.$ Since $V\colon A\tensor A \arrow A\tensor\H$ extends to a unitary on $\H\tensor\H,$ it follows that
$V(\ell_1\tensor \ell_2)V\inv$ is trace class, and positive, on the tensor product $\H\tensor\H.$ Being trace class under this representation  implies that $V(\ell_1\tensor \ell_2)V\inv,$ as a Hilbert module operator, is not just in the bounded adjointable operators, $\adjointable( \HA ),$ but is actually in the ideal of ``compact'' operators $\mathcal K(\HA)\isom A\tensor\compact.$
Applying the slice map  $\Id\tensor\haar$ to $V(\ell_1\tensor \ell_2)V\inv,$ we have an element of $A.$ Let $L:=(\Id\tensor\haar)V(\ell_1\tensor \ell_2)V\inv.$

We next show $\F(L)=\F(\ell_1)\F(\ell_2).$ If we apply $\F(a)$ to $L$ and use the property that $V$ maps the action of $a$ on $A\tensor \H$ to the action of $\coproduct{}(a)$ on $A\tensor A,$ we have
$ \haar(a\cdot)\compose  (\Id\tensor\haar)V(\ell_1\tensor\ell_2)V\inv= (\haar\tensor\haar)(V\coproduct{}(a)(\ell_1\tensor\ell_2)V\inv).$
Since $\haar\tensor\haar$ is a tensor product of traces, it is a trace, and the right hand side of the above is equal to $(\haar\tensor\haar)(\coproduct{}(a)(\ell_1\tensor\ell_2)).$ If we use the definition of the Fourier transform, $\beta(a,\F(b))=\haar(ab),$ to replace tracial Haar states by pairings with the Fourier transform, we can simplify this to $\beta(\coproduct{}(a),\F(\ell_1)\tensor\F(\ell_2)).$ However, this last expression is equal to
$\beta(a,\F(\ell_1)\F(\ell_2)).$ Thus, we have shown that $\haar(aL)=\beta(a,\F(\ell_1)\F(\ell_2)),$ and since $\haar(aL)$ is equal to $\beta(a,\F(L)),$ the nondegeneracy of the pairing implies that $\F(L)=\F(\ell_1)\F(\ell_2).$ Thus, $L=\ell_1 \convolve \ell_2,$ where $L=(\Id\tensor\haar) (\ell_1\boxproduct \ell_2).$ This~proves the result, with $\haar$ instead of $t,$ but as already mentioned, in this representation
 the extended Haar state $\haar$ coincides with the usual unbounded trace $t.$
\end{proof}
The above Proposition shows that the Hilbert module $[a]\boxproduct[b]$ is generated by an operator having a certain form, and relates this operator to convolution, at least in the case that the Haar state is tracial. At present, the second statement, $(\Id\tensor t) V(\ell_1\tensor\ell_2)V\inv=\ell_1 \convolve \ell_2,$ does not pass to Cuntz equivalence classes, because applying the slice map does not respect Cuntz equivalence. So at the moment this second statement is simply an identity at the level of operators.

\subsection*{The Open Projection Picture}
The open projection picture presents the Cuntz semigroup in the form of projections together with an equivalence relation and states coming from traces, rather like  
 in the case of a K-theory group. Although the equivalence relation is not the K-theory equivalence relation---and the ambient algebra is the double dual---there is nevertheless a parallel between Cuntz semigroups and K-theory groups. We will see that the Cuntz semiring is given by a ring of (equivalence classes of) certain projections from the enveloping Hopf von Neumann bi-algebra.

Recall that a weight on a C*-algebra is a map $\haar\colon A_{+} \arrow [0,\infty]$ such that $\haar(x+y)=\haar(x)+\haar(y)$ and $\haar(\lambda x)=\lambda\haar(x)$ for all $x$ and $y$ in $A_{+}$ and $\lambda\in[0,\infty).$ Weights appear naturally when we attempt to extend a trace from a C*-algebra to the enveloping von Neumann algebra.
The next Lemma follows from  \cite{ORT} (Proposition 5.2), see also  \cite[]{combes} (Propositions 4.1 and 4.4).
\begin{Lemma}A densely defined semicontinuous tracial weight on a C*-algebra $A$ extends uniquely to a normal tracial weight on the enveloping von Neumann algebra $\Add.$
\label{lem:extending.traces}\end{Lemma}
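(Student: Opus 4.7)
The plan is to use the standard integral representation of a lower semicontinuous weight as a supremum of bounded positive linear functionals, extend each such functional by $\sigma$-weak continuity to the double dual, and then take a supremum. Concretely, by Combes's representation theorem, any densely defined lsc weight $\haar$ on $A$ satisfies $\haar(x) = \sup_{f \in F} f(x)$ for all $x \in A_+$, where $F \subseteq A^*_+$ is the (upward-directed) family of bounded positive linear functionals on $A$ majorized by $\haar$. Each $f \in F$ extends uniquely to a normal positive linear functional $\widetilde{f}$ on $\Add$ by the universal property of the enveloping von Neumann algebra. I then define
\[
\widetilde{\haar}(x) := \sup_{f \in F} \widetilde{f}(x), \qquad x \in (\Add)_+.
\]
Positive homogeneity is immediate, and additivity follows from the upward directedness of $F$ since suprema of upward-directed families of positive functionals commute with addition on positive elements.

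Normality of $\widetilde{\haar}$ follows from a standard sup-of-sups argument: given an increasing bounded net $x_\lambda \nearrow x$ in $(\Add)_+$, each $\widetilde{f}$ is normal, so $\widetilde{f}(x) = \sup_\lambda \widetilde{f}(x_\lambda) \leq \sup_\lambda \widetilde{\haar}(x_\lambda)$, and taking the supremum over $f \in F$ gives $\widetilde{\haar}(x) \leq \sup_\lambda \widetilde{\haar}(x_\lambda)$, with the reverse inequality being obvious. Uniqueness is equally straightforward: any normal extension is determined by its values on the $\sigma$-weakly dense subalgebra $A$, and on $A_+$ these values are fixed by the semicontinuity hypothesis via Kaplansky density.

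The main obstacle is preservation of the tracial identity. One must argue that the family $F$ can be chosen to consist of \emph{tracial} positive functionals, i.e.\ those satisfying $f(x^*x) = f(xx^*)$; once this is done, the extension of each tracial $f$ is tracial on $\Add$ (because $A$ is $\sigma$-weakly dense and $\widetilde{f}$ is $\sigma$-weakly continuous), and hence $\widetilde{\haar}$ is tracial as a supremum of traces. The existence of a sufficient supply of tracial minorants is the content of the tracial version of Combes's theorem (compare \cite{combes}, Proposition~4.4): one uses a noncommutative Radon--Nikodym argument, or averages non-tracial minorants by an approximate unit and exploits the trace property of $\haar$ to produce tracial majorants dominated by $\haar$. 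This step, rather than the formal extension procedure, is the technical heart of the lemma and is precisely the point at which the tracial hypothesis is used.
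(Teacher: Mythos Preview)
Your proposal is correct and follows precisely the approach indicated by the paper, which does not give a self-contained proof but simply cites \cite{ORT} (Proposition~5.2) and \cite{combes} (Propositions~4.1 and~4.4). Your sketch is essentially a reconstruction of Combes's argument: the representation of a densely defined lower semicontinuous weight as a supremum of bounded positive functionals (Proposition~4.1), and the refinement to tracial minorants in the tracial case (Proposition~4.4), followed by the routine extension to $\Add$ via normality. You have correctly identified that the only nontrivial step is the existence of sufficiently many \emph{tracial} minorants, and you point to exactly the reference the paper invokes for this.
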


We recall that an open projection is a~projection in the enveloping von Neumann algebra of a~C*-algebra that is the limit of an increasing net $\{a_\lambda\}$ of positive elements of $A$ in the $\sigma(\Add,A^* )$ topology. There is a bijective correspondence between open projections and hereditary sub-C*-algebras of $A,$ given by $p\mapsto p\Add\! p \cap A.$
The open projection $[a]$ associated with an element $a\in A$ is the strong limit of the sequence $|a|^{1/n}.$ We write $\lim |a|^{1/n} = [a].$
We note that the linear span of the open projections is dense (in norm) in $A.$ This follows, for example, from the spectral theorem. Thus,~elements of $A$ can be approximated by linear combinations of projections of the double dual.

It is well-known that algebra traces correspond to states on the K-theory group. A similar statement, suitably interpreted, is true for Cuntz semigroups.
 We recall that the states on the Cuntz semigroup are the so-called dimension functions. In the case of exact or nuclear C*-algebras, we~can define the dimension functions in a way that will be convenient later on. Given a trace $\haar$ on the algebra, and a positive element $a,$ we may consider using Lemma \ref{lem:extending.traces} to extend the trace to a weight on the double dual, and then applying this weight to the open projection associated with $a$. Denoting this operation by $\widetilde{\haar}([a]),$ the map $a\mapsto \widetilde{\haar}([a])$ is then a dimension function. Since it is necessary to allow the formation of direct sums, we must therefore consider the matrix algebras $A\tensor\M{n}.$ Thus, we~should in fact replace the trace $\haar$ by $\haar\tensor t$ where $t$ is the standard trace on $\M{n}.$ We may use the compact operators $\compact$ instead of the matrix algebras $\M{n},$ where $ n=1,2,3,\cdots,$ and if this is done, then we~tensor with the standard trace on $\compact$ when constructing dimension functions.

Following the discussion in  \cite{{DawsLePham}}, given a C*-algebraic quantum group, $A$, with coproduct $\coproduct{}\colon A\arrow A\tensor A,$ we can consider $A\subset \BH$ in the universal representation, so that both $A\tensor A$ and $\Add\tensor \Add$ are subalgebras of $\BHH.$ We can thus extend the coproduct homomorphism $\coproduct{}\colon \Add\arrow \Add\tensor \Add,$ obtaining, as is well-known, the enveloping Hopf (or Kac)--von Neumann algebra $(\Add,\coproduct{})$ associated with $A.$
\begin{Proposition} In a Kac--von Neumann bi-algebra, if $p=\lim_{n\arrow\infty} a^{1/n}$ and $q=\lim b^{1/n}$ are open projections, and $a$ and $b$ are positive elements of the underlying compact-type C*-algebraic quantum group, the strong limit of $a^{1/n} \convolve b^{1/n}$ is $p\convolve q.$ \label{prop:limits.of.box.and.convolve}
\end{Proposition}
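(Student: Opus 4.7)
My plan is to exploit the operator formula $\ell_1 \convolve \ell_2 = (\Id \tensor t)V(\ell_1 \tensor \ell_2)V\inv$ from Proposition \ref{prop:box.and.convolve}, and pass to normal extensions in the enveloping Hopf--von Neumann bi-algebra. Since the assertion is invariant under rescaling, I first normalize so that $\|a\|, \|b\| \leq 1$. On positive contractions, the continuous functions $x \mapsto x^{1/n}$ on $[0,1]$ form a bounded increasing sequence whose pointwise limit is the characteristic function of the support, so by the functional calculus the sequences $a^{1/n}$ and $b^{1/n}$ are monotone increasing in $\Add$ with strong operator limits $p$ and $q$ respectively.

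The next step is to lift this monotone strong convergence through the tensor product and through conjugation by $V$. For positive operators, $x \leq x'$ and $y \leq y'$ yield $x \tensor y \leq x' \tensor y'$, via the identity
\begin{equation*}
x' \tensor y' - x \tensor y = (x' - x) \tensor y' + x \tensor (y' - y),
\end{equation*}
so $a^{1/n} \tensor b^{1/n}$ is monotone increasing in $\Add \tensor \Add$ with strong limit $p \tensor q$. Since $V$ is a unitary on $\HtH$, conjugation $X \mapsto VXV\inv$ is order-preserving and SOT-continuous on bounded sets, so $V(a^{1/n} \tensor b^{1/n}) V\inv \uparrow V(p \tensor q) V\inv$ strongly.

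The final step is to apply the slice map $\Id \tensor \haar$, where $\haar$ is the tracial Haar state extended to a normal tracial weight on $\Add$ by Lemma \ref{lem:extending.traces}; this extended weight coincides with the unbounded trace $t$ in the representation used in the proof of Proposition \ref{prop:box.and.convolve}. By normality of the extended weight, the associated slice map preserves monotone increasing suprema of positive operators, giving
\begin{equation*}
(\Id \tensor \haar)V(a^{1/n} \tensor b^{1/n}) V\inv \;\uparrow\; (\Id \tensor \haar)V(p \tensor q) V\inv
\end{equation*}
strongly. By Proposition \ref{prop:box.and.convolve}, the left-hand side is $a^{1/n} \convolve b^{1/n}$. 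Taking the right-hand side as the definition of $p \convolve q$ on the Kac--von Neumann bi-algebra (which agrees with the original convolution on positive elements of $A$ by the same proposition), we obtain the desired strong convergence.

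The main obstacle I anticipate is the unboundedness of the weight $\haar$ on $\Add$: the slice map $\Id \tensor \haar$ is not a bounded operation on the full tensor product, and so one cannot simply invoke norm or weak-$*$ continuity. The interchange of the slice map with the increasing limit has to be justified by the normality of the extended Haar weight, which is precisely the content of Lemma \ref{lem:extending.traces}. A secondary technical point is making sure that $p \convolve q$ is intrinsically defined on the Kac--von Neumann bi-algebra and not merely as the strong limit one is trying to compute; this is resolved by the coproduct extending normally to $\Add$, so that the normal-weight formula for convolution extends consistently from $A$ to $\Add$ and assigns an unambiguous value to $p \convolve q$.
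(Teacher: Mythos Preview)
Your argument is correct but follows a genuinely different route from the paper's. The paper does not use the $V$-conjugation formula of Proposition~\ref{prop:box.and.convolve} at all. Instead, it first shows directly that $a^{1/n}\convolve b^{1/n}$ is increasing and bounded above by $p\convolve q$ (using only that convolution of positives is positive), so that the strong limit exists. It then identifies the limit using Kahng's formula
\[
x\convolve y=\bigl(\haar(\antipode\inv\cdot)\tensor\Id\bigr)\bigl[\coproduct(y)(x\tensor\Id)\bigr],
\]
applied both to $a^{1/n},b^{1/n}$ and to $p,q$: normality of $\coproduct$ and joint strong continuity of multiplication on bounded sets give strong convergence of the bracketed expression, and the slice map $\haar(\antipode\inv\cdot)\tensor\Id$ is treated as a closed operator to pass the limit through.

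Your route via $V(a^{1/n}\tensor b^{1/n})V\inv$ and normality of the extended Haar weight is cleaner for the convergence itself, since monotonicity is preserved at every step and normality handles the unbounded slice map in one stroke. The price you pay is the final identification: you arrive at $(\Id\tensor t^{**})V(p\tensor q)V\inv$, and must then argue that this equals the intrinsically defined $p\convolve q$ on the Kac--von Neumann algebra. You flag this and sketch a resolution via normal extension of the convolution formula, which is adequate, but note that in the paper this identification is exactly Corollary~\ref{cor:box.and.convolve2}, deduced \emph{from} the Proposition rather than used to prove it. In effect your argument proves the Proposition and Corollary~\ref{cor:box.and.convolve2} simultaneously, whereas the paper separates them by working with the Kahng formula, which is already the intrinsic definition at the von~Neumann level and so needs no separate identification step.
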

\begin{proof} The convolution product of positive operators is a positive operator (as follows from, e.g.,  \cite[]{EnockSchwartz} (Theorem 1.3.3.i)).
Since $a^{1/(n+1)} -a^{1/n}$ is a positive operator, and similarly  $b^{1/(n+1)} -b^{1/n}$ is positive, we have
$a^{1/n} \convolve b^{1/n} \leq  a^{1/(n+1)}  \convolve b^{1/n}\leq a^{1/(n+1)} \convolve b^{1/(n+1)}. $
 Since $p -a^{1/n}$ and $q -b^{1/n}$ are positive operators, we similarly have
$a^{1/n} \convolve b^{1/n} \leq  p \convolve q. $
This shows that the sequence of positive operators $a^{1/n} \convolve b^{1/n}$ increases and is bounded above, and therefore converges (strongly) in the von Neumann algebra (for example, by  \cite[]{pedersen.book} (Lemma 2.4.4)).

That the limit is $p\convolve q$ remains to be proven. 
 Using an identity from  \cite[]{kahng} (Proposition 3.11), we have
$a^{1/n} \convolve b^{1/n} = \left(\haar(\antipode\inv\cdot)\tensor\Id\right)\left[\coproduct{}(b^{1/n})(a^{1/n}\tensor\Id) \right],$ and
 $p \convolve q = \left(\haar(\antipode\inv\cdot)\tensor\Id\right)\left[\coproduct{}(q)(p\tensor\Id) \right],$
where $\haar$ is the Haar weight on the Kac--von Neumann algebra (which extends the Haar state on the underlying compact-type C*-algebraic quantum group.)
Since the coproduct homomorphism is normal, we have $\lim_{n\rightarrow\infty} \coproduct{}(b^{1/n})=\coproduct{}(p).$
Since $\coproduct{}(b^{1/n})=\coproduct{}(b)^{1/n}$ is a strongly convergent sequence in $\Add\tensor\Add,$ and since multiplication is jointly continuous, with respect to the strong topology, on norm-bounded subsets, it follows that the sequence $\coproduct{}(b^{1/n})(a^{1/n}\tensor\Id)$  converges in the strong topology.
When we apply the slice map  $\haar(\antipode\inv(\cdot))\tensor\Id$ to the strongly convergent sequence $\coproduct{}(b^{1/n})(a^{1/n}\tensor\Id),$ we obtain the sequence of positive operators $a^{1/n} \convolve b^{1/n},$ which we have shown to converge in the strong topology.
Because the slice map  $\haar(\antipode\inv(\cdot))\tensor\Id$ is closed when viewed as an an operator, it follows that the limit of the sequence
$(\haar(\antipode\inv(\cdot)) \tensor\Id )\left[ \coproduct{}(b^{1/n})(a^{1/n}\tensor\Id)\right]$ is $p\convolve q,$ and therefore that the strong limit of $a^{1/n} \convolve b^{1/n}$ is $p\convolve q.$
  \end{proof}
The above Proposition shows that  the formula  $(\Id\tensor t) V(\ell_1\tensor\ell_2)V\inv=\ell_1 \convolve \ell_2$ given in Proposition \ref{prop:box.and.convolve} is well-behaved under passage to open projections:

\begin{Corollary} Let $A$ be a compact-type C*-algebraic quantum group with tracial Haar state.
Let $p$ and $q$ be open projections of $A.$ Let $t$ be the standard trace on $\compact.$
We have $(\Id\tensor t^{**}) (V(p \tensor q)V\inv)=p \convolve q.$ \label{cor:box.and.convolve2}
\end{Corollary}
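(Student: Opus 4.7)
The plan is to realize both sides of the claimed identity as strong limits of quantities that are already equated by Proposition \ref{prop:box.and.convolve}, and then invoke normality of the extended trace to interchange the limit with the slice map.

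Choose positive $a,b\in A$ so that, as in the discussion preceding Proposition \ref{prop:limits.of.box.and.convolve}, $p=\lim_{n} a^{1/n}$ and $q=\lim_{n} b^{1/n}$ in the strong operator topology. The sequences $(a^{1/n})$ and $(b^{1/n})$ are strongly monotone increasing, bounded above by $p$ and $q$, so $a^{1/n}\tensor b^{1/n}$ increases strongly to $p\tensor q$ in $\Add\tensor\Add\subset\BHH$. Since $V$ extends to a unitary on $\HtH$, conjugation $x\mapsto VxV\inv$ preserves positivity and order; combined with joint strong continuity of multiplication on norm-bounded subsets, this yields that $V(a^{1/n}\tensor b^{1/n})V\inv$ is strongly monotone increasing with strong supremum $V(p\tensor q)V\inv$. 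At each $n$, Proposition \ref{prop:box.and.convolve} gives $(\Id\tensor t)(V(a^{1/n}\tensor b^{1/n})V\inv)=a^{1/n}\convolve b^{1/n}$, and by Proposition \ref{prop:limits.of.box.and.convolve} this right-hand side converges strongly to $p\convolve q$.

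The main obstacle is interchanging the strong supremum with the slice map on the left, passing from $\Id\tensor t$ (defined on positive elements of $A\tensor\compact$) to $\Id\tensor t^{**}$ on the double dual. The standard trace $t$ is a densely defined, lower semicontinuous tracial weight on $\compact$, so Lemma \ref{lem:extending.traces} applied in the second tensor factor yields a unique normal tracial extension $t^{**}$ to $\compact^{**}$, coinciding with $t$ on the positive cone of $\compact$. The resulting slice map $\Id\tensor t^{**}$ is a normal operator-valued weight on the positive cone of $\Add\tensor\compact^{**}$, and normality means precisely that it commutes with suprema of increasing nets of positives, the value being taken in the extended positive part of $\Add$.

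Assembling the pieces,
$$(\Id\tensor t^{**})\bigl(V(p\tensor q)V\inv\bigr)\;=\;\sup_{n}(\Id\tensor t^{**})\bigl(V(a^{1/n}\tensor b^{1/n})V\inv\bigr)\;=\;\sup_{n}\bigl(a^{1/n}\convolve b^{1/n}\bigr)\;=\;p\convolve q,$$
where the middle equality uses that on positive elements of $A\tensor\compact$ the weight $\Id\tensor t^{**}$ reduces to $\Id\tensor t$, so Proposition \ref{prop:box.and.convolve} applies at each level, and the outer equalities are the normality statement and Proposition \ref{prop:limits.of.box.and.convolve} respectively. That the right-most supremum is in fact bounded (and hence the left-most value lies in $\Add$, not merely its extended positive part) is built into the existence of the strong limit $p\convolve q$ provided by Proposition \ref{prop:limits.of.box.and.convolve}.
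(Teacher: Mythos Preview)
Your argument is correct and follows precisely the route the paper intends: the corollary is stated there without proof, as an immediate consequence of combining Proposition~\ref{prop:box.and.convolve} with Proposition~\ref{prop:limits.of.box.and.convolve} via passage to strong limits. You have supplied the details the paper leaves implicit, in particular the normality of the slice map $\Id\tensor t^{**}$ needed to interchange the supremum with the slice, and the monotonicity of $V(a^{1/n}\tensor b^{1/n})V\inv$ that makes the normality argument applicable.
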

 \medskip

Applying the Haar state to both sides of the identity  provided by using the above Corollary, we~have a Corollary that remains valid at the level of Cuntz equivalence classes (because $\haar^{**}\tensor t^{**}$ is an example of a state on the Cuntz semigroup, in the open projection picture, and therefore respects Cuntz equivalence):
\begin{Corollary} Let $A$ be a separable  compact-type  C*-algebraic  quantum group with  tracial Haar state, $\haar.$ Let $p$ and $q$ be open projections of $A.$
We have $(\haar^{**}\tensor t^{**}) (p\boxproduct q)=\haar^{**}(p)\haar^{**}( q),$ where $t$ is the standard trace on~$\compact$. \label{cor:box.and.convolve.normalization}
\end{Corollary}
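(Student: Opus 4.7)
The plan is to combine Corollary \ref{cor:box.and.convolve2} with an invariance computation for the convolution product. Since, as remarked just before the statement, $\haar^{**}\tensor t^{**}$ is a state on the Cuntz semigroup in the open projection picture and hence respects Cuntz equivalence, I can apply $\haar^{**}$ to both sides of the identity $(\Id\tensor t^{**})(V(p\tensor q)V\inv)=p\convolve q$ provided by Corollary \ref{cor:box.and.convolve2}. This reduces the claim to showing that $\haar^{**}(p\convolve q)=\haar^{**}(p)\,\haar^{**}(q)$.

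To prove that identity, I would first work with positive elements $a,b\in A$. The formula borrowed from Kahng in the proof of Proposition \ref{prop:limits.of.box.and.convolve} gives $a\convolve b=(\haar(\antipode\inv\cdot)\tensor\Id)[\coproduct{}(b)(a\tensor 1)]$. Applying $\haar$ and using that in the tracial Kac setting $\haar\compose\antipode\inv=\haar$ yields $\haar(a\convolve b)=(\haar\tensor\haar)[\coproduct{}(b)(a\tensor 1)]$. Writing $\coproduct{}(b)=\sum b_{(1)}\tensor b_{(2)}$ in Sweedler notation, the right-hand side becomes $\sum\haar(b_{(1)}a)\,\haar(b_{(2)})$, and the right-invariance of the Haar state, $(\Id\tensor\haar)\coproduct{}(b)=\haar(b)\,1$, collapses this sum to $\haar(a)\,\haar(b)$.

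It remains to transfer the equality from bounded positive elements to the open projections $p$ and $q$. Choose positive $a,b\in A$ with $p=s\text{-}\lim a^{1/n}$ and $q=s\text{-}\lim b^{1/n}$. By Proposition \ref{prop:limits.of.box.and.convolve}, $a^{1/n}\convolve b^{1/n}$ increases strongly to $p\convolve q$. Because $\haar^{**}$ is a normal tracial weight on $\Add$ by Lemma \ref{lem:extending.traces}, I can pass to the limit on both sides of $\haar(a^{1/n}\convolve b^{1/n})=\haar(a^{1/n})\,\haar(b^{1/n})$ to obtain $\haar^{**}(p\convolve q)=\haar^{**}(p)\,\haar^{**}(q)$, which combined with the first reduction yields the claim.

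The main obstacle I anticipate is bookkeeping at the level of the double dual when $\haar^{**}(p)$ or $\haar^{**}(q)$ is $+\infty$: one must confirm that the slice map $\haar(\antipode\inv\cdot)\tensor\Id$ commutes with the monotone strong limits used, and that normality permits the separate exchange of limits in each factor. Since both $a^{1/n}$ and $b^{1/n}$ are monotone increasing and the resulting convolution sequence is also monotone increasing (as verified in the proof of Proposition \ref{prop:limits.of.box.and.convolve}), the standard normality of $\haar^{**}$ together with the fact that sums and products of positive monotone nets in a von Neumann algebra behave well under normal weights should make this step routine rather than substantive.
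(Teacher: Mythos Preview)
Your proposal is correct and follows essentially the same route as the paper: the paper's argument is just the one-line remark preceding the Corollary, namely applying $\haar^{**}$ to both sides of Corollary~\ref{cor:box.and.convolve2} and using that $\haar^{**}\tensor t^{**}$ is a Cuntz semigroup state. You have simply made explicit the step $\haar^{**}(p\convolve q)=\haar^{**}(p)\,\haar^{**}(q)$---via the Haar invariance identity $(\Id\tensor\haar)\coproduct{}(b)=\haar(b)1$ and the monotone limit from Proposition~\ref{prop:limits.of.box.and.convolve}---that the paper leaves to the reader.
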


\begin{Remark} Corollary \ref{cor:box.and.convolve.normalization} together with the already established twisted linear property shows that the product we have constructed has the properties required in Definition 2.1 of  \cite{K.nontracial}.  \end{Remark}

The dual object of a compact-type C*-algebraic quantum group $A$ is, in the setting provided by~\cite{BS}, a discrete-type C*-algebraic quantum group. 
From the discussion at the start of Section \ref{Section4}, recall that there is a Fourier transform on $A,$  defined by
$\beta(a,\F(b))=\haar(ab),$ 
and a convolution product, $\convolve,$ in effect defined by $\F(a\convolve b)=\F(a)\F(b).$ We remark that the Fourier transform is invertible on its range, and that
\begin{equation*}\beta(a,w)=\haar(a\F\inv(w))\label{eq:inv.pairing}\end{equation*} for all $w$ in the range of the Fourier transform.
The next lemma  gives some properties of C*-algebraic isomorphisms that intertwine Haar states: the continuity comes from the open mapping theorem. 

\begin{Lemma} Let $A, B$ be compact-type Hopf $C^*$-algebras with faithful  Haar states. Let $f:A\arrow B$ be a~$C^*$-algebraic isomorphism that intertwines the left Haar states. Then, we have the identity
 $\F_B \compose f = f^*\inv\compose\F_A,$ where $\F$ denotes the Fourier transform, and $f^*\inv$  is the inverse map for $f^*,$ the map induced by $f$ on the dual algebras. The isomorphism $f$ takes the domain of $\F_A$ to the domain of $\F_B.$ The linear map $f^*\inv$ is continuous and takes the range of $\F_A$ to the range of $\F_B.$
\label{lem:pullbacks.and.Fourier}\end{Lemma}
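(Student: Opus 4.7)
The plan is to chase the defining identity $\beta(a,\F(b))=\haar(ab)$ of the Fourier transform through the intertwining hypothesis $\haar_B\compose f=\haar_A$. Recall that the transpose $f^*\colon\Bh\arrow\Ah$ is characterised via the pairing by $\beta_A(a,f^*(\omega))=\beta_B(f(a),\omega).$ Since $f$ is a $C^*$-algebraic isomorphism, it is isometric, so $f^*$ is a bijective bounded linear map between Banach spaces; the open mapping theorem then yields continuity of $f^*\inv$.

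First I would verify the identity $\F_B\compose f=f^*\inv\compose \F_A$ on the domain of $\F_A$. Given $b$ in this domain, it suffices to check that $f^*(\F_B(f(b)))=\F_A(b)$ as elements of $\Ah$. Computing, for arbitrary $a\in A$,
\begin{align*}
\beta_A\bigl(a,\,f^*(\F_B(f(b)))\bigr) &= \beta_B\bigl(f(a),\F_B(f(b))\bigr) = \haar_B\bigl(f(a)f(b)\bigr) \\
&= \haar_B\bigl(f(ab)\bigr) = \haar_A(ab) = \beta_A\bigl(a,\F_A(b)\bigr),
\end{align*}
where the second equality uses the definition of $\F_B$, the third the multiplicativity of $f$, the fourth the intertwining hypothesis, and the last the definition of $\F_A$. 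Non-degeneracy of $\beta_A$ forces $f^*(\F_B(f(b)))=\F_A(b)$, and applying $f^*\inv$ yields the claimed identity.

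For the domain assertion, membership of $b$ in $\mathrm{dom}(\F_A)$ amounts to the functional $a\mapsto \haar_A(ab)$ extending, via the pairing, to a bounded element $\F_A(b)\in\Ah$. The chain of equalities above shows that the analogous functional for $f(b)$ is exactly the pull-back $f^*\inv(\F_A(b))$ of a bounded element of $\Ah$; since $f^*\inv$ is bounded, this pull-back is a bounded element of $\Bh$, so $f(b)\in\mathrm{dom}(\F_B)$. The reverse inclusion follows by applying the same argument to $f\inv$. The range assertion is then immediate from the main identity, since $f^*\inv$ carries $\F_A(b)$ to $\F_B(f(b))$ as $b$ ranges over $\mathrm{dom}(\F_A)$.

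The main obstacle is organisational rather than technical: because $\F$ is unbounded, the identity $\F_B\compose f=f^*\inv\compose \F_A$ must be read as an equality of partially defined maps, and one must ensure that the bounded extensions implicit in each side actually match up. This is why I first work at the level of the pairing, where everything is a honest complex number, and only then transport the conclusion into $\Ah$ and $\Bh$ via non-degeneracy and the continuity of $f^*\inv$.
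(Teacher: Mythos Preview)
Your proof is correct and follows precisely the approach the paper indicates: the paper does not give a detailed proof of this lemma, noting only that ``the continuity comes from the open mapping theorem,'' and you have filled in exactly the computation with the pairing $\beta$ and the defining identity $\beta(a,\F(b))=\haar(ab)$ that the paper leaves implicit. Your handling of the domain and range assertions, and your remark that the identity must be read as an equality of partially defined maps, are the natural elaborations of what the paper takes for granted.
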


 We now consider the map induced on the dual by a  $\fancyK$-co-multiplicative map.
\comment{An unbounded example of a co-tracial linear functional is the trace on the dual.
I think that this trace is in some cases the co-unit composed with the inverse Fourier transform, but since we may not have co-unit projs we have to take limits of co-tracial functionals of the form $\haar(e_n\cdot)$ where $(e_n)$ is an approximate unit for the co-commutative subalgebra.}
\begin{Lemma} Let $A$ and $B$ be compact-type Hopf C*-algebraic quantum groups that have a tracial Haar state, and are separable and nuclear at the C*-algebraic level.
Let $f:A\longrightarrow B$ be a C*-isomorphism that is  $\fancyK$-co-multiplicative and intertwines Haar states. \label{lem:cotraces}\label{lem:de-projectivization}
The map $f^*$ induced by $f$ on the dual algebra(s) satisfies $\haarh(f^*\inv(y_1 y_2))= \haarh(f^*\inv(y_1) f^*\inv (y_2)),$ for all finitely supported $y_i\in\Ah,$  where $\haarh$ is the Haar state of $\Bh.$
\end{Lemma}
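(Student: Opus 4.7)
The plan is to Fourier transform both sides into statements about elements of $A$ and $B$, and then leverage the compatibility of the Haar states with convolution together with the $\fancyK$-co-multiplicativity of $f$.

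First, since each $y_i\in\Ah$ is finitely supported, I can write $y_i=\F_A(\ell_i)$ for some algebraic $\ell_i\in A_0$. The defining property of the convolution then gives $y_1y_2=\F_A(\ell_1)\F_A(\ell_2)=\F_A(\ell_1\convolve\ell_2)$. Applying Lemma \ref{lem:pullbacks.and.Fourier} we obtain $f^{*-1}(y_i)=\F_B(f(\ell_i))$ and $f^{*-1}(y_1y_2)=\F_B(f(\ell_1\convolve\ell_2))$, so the claim becomes
\[
\haarh\bigl(\F_B(f(\ell_1\convolve\ell_2))\bigr)=\haarh\bigl(\F_B(f(\ell_1)\convolve f(\ell_2))\bigr).
\]

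Next, I would invoke Corollary \ref{cor:box.and.convolve.normalization}, extended from open projections to positive algebraic elements via the spectral theorem and linearity, to obtain the normalization identity $\haar(a\convolve b)=\haar(a)\haar(b)$ both in $A$ and in $B$. Combined with the hypothesis that $f$ is an algebra homomorphism intertwining the Haar states, this yields
\[
\haar\bigl(f(\ell_1\convolve\ell_2)\bigr)=\haar(\ell_1)\haar(\ell_2)=\haar(f(\ell_1))\haar(f(\ell_2))=\haar\bigl(f(\ell_1)\convolve f(\ell_2)\bigr),
\]
so the Haar state alone cannot distinguish $f(\ell_1\convolve\ell_2)$ from $f(\ell_1)\convolve f(\ell_2)$ as elements of $B$.

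The main obstacle, and the final step, is upgrading this equality of Haar values to the required equality of $\haarh\circ\F_B$ values. My approach would be to identify $\haarh\circ\F_B$ on algebraic elements with a limit of co-tracial functionals of the form $\haar(e_n\,\cdot\,)$ for an approximate unit $(e_n)$ supported in the co-commutative part of $\Bdd$, along the lines alluded to in the remark preceding the Lemma. Here the $\fancyK$-co-multiplicativity of $f$, via Proposition \ref{prop:box.and.convolve} which realizes $f(\ell_1\convolve\ell_2)$ and $f(\ell_1)\convolve f(\ell_2)$ as explicit operator representatives of the same class $[f(\ell_1)]\boxproduct[f(\ell_2)]\in\Cu(B)$, ensures that the two elements generate the same Hilbert submodule; together with intertwining of Haar states, each cotracial functional in the approximating family agrees on them, and passing to the limit completes the proof. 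The tracial (Kac) hypothesis is essential in identifying $\haarh\circ\F_B$ as such a limit of Haar-based cotraces, and this is the delicate technical heart of the argument.
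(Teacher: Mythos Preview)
Your setup is fine: writing $y_i=\F_A(\ell_i)$, invoking Lemma~\ref{lem:pullbacks.and.Fourier}, and reducing the claim to
\[
\haarh\bigl(\F_B(f(\ell_1\convolve\ell_2))\bigr)=\haarh\bigl(\F_B(f(\ell_1)\convolve f(\ell_2))\bigr)
\]
is exactly how the paper begins (in the opposite order). But the final step has a real gap, and the middle step is a detour.

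The detour: your computation $\haar(f(\ell_1\convolve\ell_2))=\haar(f(\ell_1)\convolve f(\ell_2))$ is correct but does not use $\fancyK$-co-multiplicativity at all, and so cannot be the engine of the proof. You recognize this, but then try to upgrade it via an approximation of $\haarh\circ\F_B$ by functionals $\haar(e_n\,\cdot\,)$; this is where the argument breaks.

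The gap: you assert that $f(\ell_1\convolve\ell_2)$ and $f(\ell_1)\convolve f(\ell_2)$ are ``operator representatives of the same class $[f(\ell_1)]\boxproduct[f(\ell_2)]$'' and hence ``generate the same Hilbert submodule.'' Neither claim is correct. The class $[f(\ell_1)]\boxproduct[f(\ell_2)]$ is represented by $V(f(\ell_1)\tensor f(\ell_2))V\inv\in B\tensor\compact$; the convolutions are obtained from such representatives by applying the slice map $\Id\tensor t$, and the paper explicitly warns (immediately after Proposition~\ref{prop:box.and.convolve}) that this slice \emph{does not} respect Cuntz equivalence. So you cannot conclude that the two convolutions are Cuntz equivalent in $B$, let alone that they generate the same submodule. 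Consequently there is no reason your approximating functionals $\haar(e_n\,\cdot\,)$---which are linear functionals, not dimension functions---should agree on them.

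What the paper does instead is conceptually cleaner and avoids this trap. The $\fancyK$-co-multiplicativity gives Cuntz equivalence of $f(p)\boxproduct f(q)$ and $f(p\boxproduct q)$ \emph{inside} $B\tensor\compact$ (for open projections $p,q$). One then applies a dimension function of the form $T^{**}\tensor t^{**}$, which \emph{does} respect Cuntz equivalence, and only afterwards invokes Corollary~\ref{cor:box.and.convolve2} to rewrite the result as $T^{**}$ of a convolution. The decisive choice is $T=\counit_B$: since the co-unit is a $*$-homomorphism it is a trace, hence extends to a normal trace on $\Bdd$ and gives a legitimate dimension function. The identity $\counit_B=\haarh\circ\F_B$ (Van Daele) then converts the resulting equality directly into the desired statement about $\haarh$. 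Your approximation scheme for $\haarh\circ\F_B$ is attempting to rediscover this identity by hand, but without recognizing that the limit is precisely the co-unit, you cannot justify why the limiting functional is a Cuntz-semigroup state.
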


\begin{proof}
A C*-morphism $f\colon A\arrow B,$ with $B$ non-degenerately represented on a Hilbert space can be extended to $f\colon\Add\arrow B''$ where $B''$ is the von Neumann algebra generated by $B$ in that representation. In the case that the representation of $B$ is the universal representation,  $B''=B^{**}.$
Thus, we may extend the C*-isomorphism  $f:A\longrightarrow B$ to  $f:\Add\longrightarrow \Bdd,$ where $\Add$ and $\Bdd$ are the enveloping Kac--von Neumann algebras of $A$ and $B,$ respectively (in the universal representation).

A C*-isomorphism necessarily takes a hereditary subalgebra to a hereditary subalgebra, and therefore the extended map $f$ takes an open projection to an open projection.
From the hypothesis that the given map $f$ respects the product $\boxproduct$ on Cuntz equivalence classes, it follows that for open projections $p$ and $q,$  the two open projections $f(p)\boxproduct f(q)$ and $ f(p\boxproduct q)$ are equivalent.  In the open projection picture of the Cuntz semigroup, equivalence of open projections means precise 
 equality under certain tracial weights, as discussed earlier. Thus, $$ (T^{**}\tensor t^{**})(f(p)\boxproduct f(q))=(T^{**}\tensor t^{**})(f(p\boxproduct q)),$$
where $t$ is the standard  trace on $\compact,$  $T^{**}$ is the normal extension of a trace $T$ on  $B$ to the double dual  (see~Lemma \ref{lem:extending.traces}), and $p$ and $q$ are open projections of $\Add.$

By Corollary \ref{cor:box.and.convolve2}, the above expression reduces to
$ T^{**}(f(p)\convolve f(q))=T^{**}(f(p\convolve q)).$

Noting that the above expression is (bi)linear in $p$ and $q$, we may replace  $p$ and $q$  by  finite linear combinations of open projections. By the spectral theorem, a positive operator  can be written as the strong limit of an increasing sequence of elements $p_n$ that are finite linear combinations of open projections. Since $f$ and $T^{**}$ are normal maps, and since the convolution of positive operators is positive, replacing $p$ in the above by $p_n$ and taking the limit, we obtain the case where $p$ is a positive operator. Doing the same with the other variables, we thus have
 $ T^{**}(f(x)\convolve f(y))=T^{**}(f(x\convolve y)),$
where $x$ and $y$ are positive elements of $A.$ We next suppose that $x$ and $y$ are algebraic elements.

Since $f$ intertwines Haar states, Lemma \ref{lem:pullbacks.and.Fourier} shows that  $\F_B \compose f= f^*\inv\compose\F_A .$ Inserting this and the definition of convolution into the above, we conclude that, denoting the Fourier transforms of $x$ and $y$ by $\hat{x}$ and $\hat{y}$ respectively,
\begin{equation*} g(f^*\inv ( \hat{x} ) f^*\inv (\hat{y}))=g(f^*\inv(\hat{x} \hat{y})),
\label{eq:bilin1}\end{equation*}
where $g$ is a linear functional of the form $T^{**}\circ\F\inv_{B}.$ We now make a specific choice of trace $T^{**}.$ Since~the co-unit map of $B$ is a *-homomorphism, it extends by Proposition \ref{lem:extending.traces} to a normal trace on $\Bdd,$ and we use this as our choice of trace.
A calculation with Fourier transforms, following for example  Proposition 4.8 in \cite{VanDaele1998}, gives $\counit_B (a)=\haarh\compose \F_B (a)$ for all $a$ in the domain of the Fourier transform, where $\counit_B$ denotes the co-unit homomorphism of $B,$ $\F_B$ is the Fourier transform defined by the Haar state of $B,$ and $\haarh$ is the  Haar weight of the dual algebra, $\Bh.$
We thus obtain
\begin{equation*} \haarh(f^*\inv ( \hat{x} ) f^*\inv (\hat{y}))=\haarh(f^*\inv(\hat{x} \hat{y})),
\end{equation*}
where $\haarh$ is the Haar weight of $\Bh.$

We had supposed that $\hat{x}$ and $\hat{y}$ are the Fourier transforms of some algebraic elements, $x$ and $y,$ assumed to be positive at the C*-algebraic level.  By passing to finite sums, we can drop the condition that the elements $x$ and $y$ are positive in the C*-algebra, since the usual C*-algebraic decomposition of a element into four positive elements can be made to work in any unital *-closed algebra, and the algebraic elements are a unital *-closed algebra (see, for example, \cite[]{timmermann} (Theorem 5.4.1)).
We thus see~that
\begin{equation}\haarh(f^*\inv(y_1 y_2))= \haarh(f^*\inv ( y_1 ) f^*\inv (y_2)),
\label{eq:bilin2}\end{equation}
where $y_1,y_2\in\Ah$ are Fourier transforms of algebraic elements of $A.$ We now show that  $y_1,y_2\in\Ah$ are themselves algebraic elements, and that the algebraic elements of the dual are  exactly the compactly supported elements of $\Ah.$ The Fourier transform takes the algebraic elements onto the algebraic elements---see for example \cite[]{kahng} (Proposition 3.5 and Theorem 3.8)---and  thus what remains to be shown is that the algebraic elements of $\Ah$ are, in C*-algebraic terminology, the compactly supported elements of $\Ah.$
The algebraic elements $A_0$ in $A$ are a dense compact-type algebraic quantum group. The dual of  $A_0$ is a multiplier Hopf algebra. Thus, the dual of $A_0$ is an  algebraic direct sum inside $\Ah.$ At the level of algebras, $\Ah$ is a $c_0$-direct sum of matrix algebras, and so the dual of $A_0$ is, as claimed, exactly the  subalgebra of compactly supported elements inside $\Ah.$ It follows that the $y_i$ in the above expression \eqref{eq:bilin2} are compactly supported elements, as was to be shown.
 \end{proof}

\begin{Lemma}[\protect{\cite[]{kuce.classify.Hopf.by.Ktheory} (Lemma 2.9)}]  Let $A$ and $B$ be  compact-type  C*-algebraic quantum groups. Let $\alpha\colon A\arrow B$ be a  *-isomorphism, and let ${\alpha^*}\colon \Bh\arrow\Ah$ be its induced action on the dual. We suppose that the action $\alpha^*$ on the dual is a Jordan *-isomorphism. Then, either $\alpha^*$ is multiplicative, or $\alpha^*$ is anti-multiplicative. \label{lem:jordan.auto} \end{Lemma}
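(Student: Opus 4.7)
My plan is to invoke Kadison's classical structure theorem for Jordan $*$-homomorphisms of von Neumann algebras, which states that any such map decomposes as the direct sum of a $*$-homomorphism and a $*$-anti-homomorphism, with the two summands separated by a central projection. First, I would extend the Jordan $*$-isomorphism $\alpha^{*}\colon\Bh\arrow\Ah$ to a Jordan $*$-isomorphism $\Bh^{**}\arrow\Ah^{**}$ between the enveloping von Neumann algebras; this is possible because the Jordan product and the adjoint are $\sigma$-weakly continuous on bounded sets. Kadison's theorem then provides a central projection $p\in\Bh^{**}$ such that $\alpha^{*}$ is multiplicative on $p\Bh^{**}$ and anti-multiplicative on $(1-p)\Bh^{**}$. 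The remaining task is to show that $p\in\{0,1\}$.

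To do this, I would exploit that since $\alpha$ is a $C^{*}$-algebra $*$-isomorphism, the dual map $\alpha^{*}$ is automatically compatible with the dual coproducts: the identity $\widehat{\Delta}_{A}\compose\alpha^{*}=(\alpha^{*}\tensor\alpha^{*})\compose\widehat{\Delta}_{B}$ holds on compactly supported elements of $\Bh$ and extends $\sigma$-weakly to the double duals. Since $\Bh$ is a discrete-type quantum group, hence a $c_{0}$-direct sum $\bigoplus_{\lambda}M_{n_{\lambda}}(\C)$ of matrix blocks indexed by the irreducible corepresentations of $B$, the enveloping von Neumann algebra $\Bh^{**}$ is the $\ell^{\infty}$-direct sum of the same blocks, and central projections correspond to subsets $I_{0}$ of the indexing set. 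Thus $p$ corresponds to such a subset.

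The main obstacle is the final rigidity step: ruling out a proper non-trivial $I_{0}$. For this I would use that every irreducible corepresentation of a compact quantum group appears in some tensor product of a fixed faithful corepresentation with itself and its contragredient (equivalently, the trivial corepresentation lies in every $V\tensor\bar{V}$, so the fusion semiring admits no proper non-trivial ideals of the required kind). Concretely, if $I_{0}$ were non-trivial, I would choose $\lambda\in I_{0}$ and $\mu\notin I_{0}$ with $\mu$ appearing in the fusion decomposition of a tensor product of corepresentations from $I_{0}$; evaluating both sides of the coalgebra intertwining identity on a suitable matrix unit, and using the multiplicative behavior on one half versus the anti-multiplicative behavior on the other, would produce incompatible outputs in $\Ah^{**}\tensor\Ah^{**}$. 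This contradiction forces $p\in\{0,1\}$, yielding the dichotomy. A possibly cleaner alternative for the last step is to post-compose the anti-multiplicative half of $\alpha^{*}$ with the antipode $\antipodeh_{B}$ (itself anti-multiplicative), obtaining a genuine $*$-homomorphism $\Bh^{**}\arrow\Ah^{**}$ that still intertwines the Haar data; uniqueness of such a lift under the coalgebra intertwining then collapses the Kadison decomposition.
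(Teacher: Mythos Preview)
The paper does not prove this lemma at all; it is simply cited from \cite{kuce.classify.Hopf.by.Ktheory} (Lemma 2.9), so there is no in-paper argument to compare against. I can only assess your proposal on its own merits.

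Your opening moves are sound. Extending $\alpha^{*}$ to the double duals and invoking Kadison's decomposition is the natural strategy, and your observation that $\alpha^{*}$ automatically intertwines the dual coproducts is correct and worth stating explicitly: since $\alpha$ is multiplicative and the product on $\Bh$ is dual to $\coproduct[B]$, one indeed has $\widehat{\Delta}_{A}\compose\alpha^{*}=(\alpha^{*}\tensor\alpha^{*})\compose\widehat{\Delta}_{B}$. The identification of central projections in $\Bh^{**}$ with subsets of the index set of irreducible corepresentations is also fine.

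The genuine gap is the rigidity step. Both of your proposed arguments for forcing $p\in\{0,1\}$ are sketches rather than proofs. In the fusion argument, you assert that evaluating the coalgebra intertwining identity on a suitable matrix unit ``would produce incompatible outputs,'' but you never say what those outputs are or why they are incompatible. The difficulty is real: the Kadison projection governs the behaviour of $\alpha^{*}$ with respect to the \emph{product} on $\Bh$, while the intertwining identity is about the \emph{coproduct}; to extract a contradiction you must exploit that $\widehat{\Delta}_{B}$ is itself a $*$-homomorphism and then analyse how $\alpha^{*}\tensor\alpha^{*}$ acts on the specific (and not arbitrary) elements $\widehat{\Delta}_{B}(x)$, block by block. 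Noting that $\alpha^{*}\tensor\alpha^{*}$ is multiplicative on a $(\lambda,\mu)$-block only when $\lambda,\mu$ are both in $I_{0}$, and anti-multiplicative only when both are in $I_{1}$, one sees that a mixed block requires a separate argument using the explicit Clebsch--Gordan form of $\widehat{\Delta}_{B}$ on matrix units. You have not supplied this.

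Your alternative via the antipode is also incomplete. Post-composing the anti-multiplicative half with $\antipodeh$ does yield a multiplicative map on that half, but $\antipodeh$ is an anti-\emph{coalgebra} map as well, so the resulting piecewise map no longer intertwines the coproducts, and the ``uniqueness of such a lift'' you invoke has not been established anywhere. As written, the proposal identifies the right ingredients but does not assemble them into a proof of the key step.
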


 \begin{Proposition}[\protect{\cite[]{kuce.classify.Hopf.by.Ktheory} (Proposition 3.1)}] Let $A$ and $B$ be compact-type C*-algebraic quantum groups, with  tracial Haar states. Let $f\colon A\longrightarrow B$ be an algebra map, intertwining co-units. Let the induced map $f^*\colon \Bh\longrightarrow\Ah$ on the duals be Jordan. Then, $f$ intertwines antipodes.\label{prop:Hopf.Jordan.maps.preserve.antipodes}
\end{Proposition}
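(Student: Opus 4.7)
The plan is to combine Lemma \ref{lem:jordan.auto} with a uniqueness-of-antipode argument, handling the two cases that the Lemma provides separately.  Since both Haar states are tracial, both $A$ and $B$ are of Kac type; in particular, the antipodes $\antipode_A$ and $\antipode_B$ are $*$-antiautomorphisms with $\antipode_A^2=\Id_A$ and $\antipode_B^2=\Id_B$. This fact will only be used in the second case below to identify $\antipode^{-1}$ with $\antipode$.

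First, I note that the hypothesis that $f$ intertwines co-units translates, on the dual side, into the statement that $f^*\colon \Bh\arrow\Ah$ is unital.  Combined with the hypothesis that $f^*$ is a Jordan $*$-isomorphism, Lemma \ref{lem:jordan.auto} gives the dichotomy that $f^*$ is either multiplicative or anti-multiplicative.

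In the first case, $f^*$ is a unital $*$-homomorphism, and dualizing, $f$ intertwines the co-products, i.e.\ $\coproduct{}_B \compose f = (f\tensor f)\compose \coproduct{}_A$.  Together with the co-unit-intertwining hypothesis this makes $f$ a (unital) bi-algebra homomorphism from $A$ to $B$.  The uniqueness of the antipode as the convolution inverse of $\Id$ in the convolution algebra on $\mathrm{Hom}(A,B)$ (the antipode is the unique map $\antipode$ with $m\compose(\antipode\tensor\Id)\compose\coproduct{} = \counit(\cdot)1 = m\compose(\Id\tensor\antipode)\compose\coproduct{}$) then forces $\antipode_B\compose f = f\compose\antipode_A$, as both sides satisfy the convolution-inverse defining property of $f$ in $\mathrm{Hom}(A,B)$.

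In the second case, $f^*$ is anti-multiplicative, and dualizing gives $\coproduct{}_B\compose f = \flip\compose(f\tensor f)\compose \coproduct{}_A$, so that $f$ is a co-anti-morphism of bi-algebras that still intertwines co-units.  The same convolution-inverse uniqueness argument, now applied with the flipped co-product on $B$, shows that $f\compose\antipode_A = \antipode^{-1}_B \compose f$; concretely, one checks that $m_B\compose(\antipode^{-1}_B\compose f\tensor f)\compose\flip\compose\coproduct{}_A$ equals $\counit_A(\cdot)1_B$ and also that $m_B\compose(f\compose\antipode_A\tensor f)\compose\flip\compose\coproduct{}_A$ equals $\counit_A(\cdot)1_B$, so by the uniqueness of a convolution inverse the two must agree.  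Invoking the Kac property $\antipode^{-1}_B = \antipode_B$ then gives $\antipode_B\compose f = f\compose\antipode_A$, as desired.

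The only real obstacle is the bookkeeping in the anti-multiplicative case: one must be careful to apply the antipode uniqueness on the correct side of the flipped co-product.  This is where tracial Haar is essential — without the identification $\antipode^{-1}=\antipode$ supplied by the Kac condition, one would obtain only $\antipode_B\compose f = f\compose \antipode^{-1}_A$, which is a genuinely weaker statement.
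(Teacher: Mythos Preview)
The paper does not prove this proposition; it is quoted from the cited reference \cite{kuce.classify.Hopf.by.Ktheory} (Proposition 3.1), so there is no in-paper proof to compare your argument against directly.

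That said, your argument has a genuine gap in its first step. You invoke Lemma~\ref{lem:jordan.auto} to obtain the multiplicative/anti-multiplicative dichotomy for $f^*$, but that lemma requires $f^*$ to be a Jordan \emph{$*$-isomorphism} (and $f$ a $*$-isomorphism), whereas the hypothesis of the Proposition says only that $f^*$ is Jordan and that $f$ is an algebra map. This mismatch is not cosmetic: in the paper's proof of Theorem~\ref{th:banacheweski.Hopf}, the $*$-preservation of $f^*$ is deduced \emph{from} the conclusion of this very Proposition (via \cite{kahng}, Lemma 3.3), and only afterwards is Lemma~\ref{lem:jordan.auto} invoked. So using Lemma~\ref{lem:jordan.auto} to prove the Proposition would render the argument circular in precisely the place where the Proposition is applied. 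The proof in the cited reference must therefore establish antipode-intertwining without first passing through the dichotomy---presumably by a direct computation with the Jordan identity and the tracial Haar states rather than by case-splitting on (co-)multiplicativity.

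Granting the dichotomy, your two-case analysis via uniqueness of convolution inverses is correct and standard, and your remark about why the Kac condition $\antipode^{-1}=\antipode$ is genuinely needed in the anti-multiplicative case is on point.
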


We now give a slight modification of the argument from Section 6 of  \cite{kuce.classify.Hopf.by.Ktheory}.
\begin{Theorem} Let $A$ and $B$ be compact-type C*-algebraic quantum groups with  tracial Haar states. Let $f\colon A\longrightarrow B$ be
a C*-isomorphism that intertwines Haar states and co-units. We suppose that the induced map on Cuntz semigroups  intertwines the  products $\boxproduct_A$ and $\boxproduct_B$. Then, $A$ and $B$ are isomorphic or co-anti-isomorphic as Hopf~algebras.\label{th:banacheweski.Hopf}
\end{Theorem}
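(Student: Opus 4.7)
The overall plan is as follows. Since $f\colon A\arrow B$ is already a $C^{*}$-isomorphism intertwining Haar states and co-units, what remains to establish is that $f$ preserves the coproduct up to a possible flip, and that it intertwines antipodes. By duality, which turns the coproduct on $A$ into multiplication on $\Ah$, the first task reduces to showing that the induced linear map $f^{*}\colon \Bh\arrow\Ah$ on the discrete duals is either multiplicative (corresponding to direct preservation of the coproduct by $f$) or anti-multiplicative (corresponding to preservation up to the tensor flip $\flip$).

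The starting point is Lemma \ref{lem:de-projectivization}, whose hypothesis of $\fancyK$-co-multiplicativity is exactly our assumption that the induced map on Cuntz semigroups intertwines $\boxproduct_{A}$ and $\boxproduct_{B}$. It yields the weighted identity
\begin{equation*}
\haarh\bigl(f^{*}\inv(y_{1}y_{2})\bigr)=\haarh\bigl(f^{*}\inv(y_{1})\,f^{*}\inv(y_{2})\bigr)
\end{equation*}
for all compactly supported $y_{1},y_{2}\in\Ah$. The central step is to promote this weighted identity to a \emph{pointwise} Jordan $*$-multiplicativity statement for $\phi:=f^{*}\inv$ on the algebraic part $\Ah_{0}$. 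The strategy is to test the identity against additional factors, by replacing $y_{1}y_{2}$ by triple products $y_{1}y_{2}y_{3}$ and associating in the two possible ways, and then to exploit the traciality of the Haar weight $\haarh$ on the Kac dual (guaranteed by our hypothesis that the Haar state of $A$ is tracial). This produces relations for the defect $P(a,b):=\phi(ab)-\phi(a)\phi(b)$; combining these with the nondegeneracy of the bilinear pairing $(u,v)\mapsto\haarh(uv)$ on each matrix block of the $c_{0}$-direct sum decomposition of $\Bh$ forces the symmetric part $P(a,b)+P(b,a)$ to vanish on algebraic elements. Since $f^{*}$ is automatically $*$-preserving, being induced by a $*$-isomorphism, we thus obtain a Jordan $*$-isomorphism between the algebraic duals.

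With Jordan $*$-multiplicativity in hand, Lemma \ref{lem:jordan.auto} yields the dichotomy that $f^{*}$ is either multiplicative or anti-multiplicative. Proposition \ref{prop:Hopf.Jordan.maps.preserve.antipodes}, which applies because $f$ intertwines co-units and the Jordan property has just been verified, then gives the intertwining of antipodes. Combining these, $f$ is either a Hopf algebra isomorphism or a Hopf algebra co-anti-isomorphism, as claimed.

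The main obstacle is the promotion step from the Haar-weighted identity to the pointwise Jordan identity. Traciality of $\haarh$ is essential, which is the ultimate reason for the restriction to the Kac case. A subsidiary but important point to verify with care is that $\phi$ is a priori only a linear isomorphism of full duals; to apply the nondegeneracy of $\haarh(\,\cdot\,\cdot\,)$ at the algebraic level, one uses the intertwining of Haar states and co-units (together with Lemma \ref{lem:pullbacks.and.Fourier}) to check that $\phi$ takes the compactly supported algebraic subalgebra $\Ah_{0}$ into the corresponding algebraic subalgebra $\Bh_{0}$, which is where the matrix-block nondegeneracy argument applies.
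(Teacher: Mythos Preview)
Your overall plan---pass to the dual via Lemma \ref{lem:de-projectivization}, establish a Jordan property for $f^{*}$, then invoke Lemma \ref{lem:jordan.auto} and Proposition \ref{prop:Hopf.Jordan.maps.preserve.antipodes}---matches the paper's architecture, but there are two genuine problems in the execution.

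First, your claim that $f^{*}$ is ``automatically $*$-preserving, being induced by a $*$-isomorphism'' is incorrect. The involution on the discrete dual $\Ah$ is not the naive $\omega\mapsto\overline{\omega(\,\cdot\,^{*})}$; it involves the antipode (see \cite{kahng}, Lemma 3.3), so $*$-preservation of $f^{*}$ is essentially equivalent to $f$ intertwining antipodes---precisely what you propose to deduce only afterwards from Proposition \ref{prop:Hopf.Jordan.maps.preserve.antipodes}. The paper therefore runs the steps in the opposite order: obtain the Jordan property (without $*$), apply Proposition \ref{prop:Hopf.Jordan.maps.preserve.antipodes} to get antipode intertwining, deduce $*$-preservation of $f^{*}$ from that, and only then invoke Lemma \ref{lem:jordan.auto}, which genuinely requires a Jordan $*$-map.

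Second, your route to the Jordan property via the defect $P(a,b)=\phi(ab)-\phi(a)\phi(b)$ is underspecified and problematic. From the two-variable identity of Lemma \ref{lem:de-projectivization} alone, associating a triple product two ways gives only $\haarh(P(a,b)\phi(c))=\haarh(\phi(a)P(b,c))$, which does not obviously force $P(a,b)+P(b,a)=0$. If instead one uses the $n$-variable form of the identity (which, as the paper remarks, follows from the \emph{same proof} as Lemma \ref{lem:de-projectivization} by iterating the $\boxproduct$-intertwining), then $n=3$ yields $\haarh(P(a,b)\phi(c))=0$ for all compactly supported $c$; but concluding $P(a,b)=0$ by nondegeneracy would require $\phi(\Ah_{0})$ to exhaust $\Bh_{0}$, and a bare C*-isomorphism $f$ need not carry algebraic elements of $A$ to algebraic elements of $B$, so Lemma \ref{lem:pullbacks.and.Fourier} does not supply this. (Indeed, full multiplicativity would be too strong: it would exclude the co-anti-isomorphism alternative that the theorem explicitly allows.) The paper avoids this entirely with a different and cleaner argument: take $y_{1}=\cdots=y_{n}=y$ in the $n$-variable identity to get $\haarh_{\Bh}(\phi(y)^{n})=\haarh_{\Ah}(y^{n})$ for all $n$, so $\phi$ preserves the trace of every power and hence the spectrum of each compactly supported element; an approximation extends spectrum preservation to all of $\Ah$, and the Aupetit--Mouton theorem \cite{aupetit3} then delivers the Jordan property directly.
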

 \begin{proof}  Note that $f^*$ intertwines the Haar states of $\Ah$ and $\Bh,$ due to the fact that $f$ intertwines the co-units of $A$ and $B.$ (See \cite[]{VanDaele1998} (Proposition 4.8)). Thus, we may regard $f^{*}$ as a linear map that intertwines the trace on certain representations of $\Ah$ and~$\Bh.$ {(Since the Haar states of $\Ah$ and $\Bh$ are tracial}, {they are in fact determined by the size of each block in the $c_0$-direct sum decomposition of $\Ah$ and $\Bh.$ Thus, the Haar state is actually encoded at the C*-algebraic level. See} \cite{fima}(Proposition 2.1) and the discussion there.)

Let $y_1$ and  $y_2$ be  compactly supported elements of $\Ah.$
 Lemma \ref{lem:cotraces}  shows
$\haarh_{\Bh}(f^*\inv(y_1 y_2))= \haarh_{\Bh}(f^*\inv(y_1) f^*\inv (y_2))$
where the map $f^*\inv\colon\Ah\arrow\Bh$ is the inverse of the pullback of the given map. The same proof holds for products of finitely many $y_i,$ so that
$\haarh_{\Bh} ({f\inv}^{*}(y)^{n})=\haarh_{\Bh} ({f\inv}^{*}(y^n))=\haarh_{\Ah} (y^n),$
and $\haarh_{\Ah}$ (resp. $\haarh_{\Bh}$) is the tracial Haar state of the discrete C*-algebraic quantum group $\Ah$ (resp. $\Bh$.)
Thus, we have not only that ${f\inv}^*(y)$ and $y$ have the same values under the trace on $\Bh\subset \BH$ and $\Ah\subset\BH,$ but that this is true when $y$ is replaced by $y^n.$
A linear map of matrix algebras that preserves the trace of every power necessarily preserves the spectrum of elements.  We conclude that $f^*\inv\colon\Ah\arrow\Bh$ preserves the spectrum of compactly supported elements.

{Approximating a general element of the $c_0$-direct sum of matrix algebras $\Ah$ by compactly supported elements, we see that the map $f^*\inv\colon\Ah\arrow\Bh$ preserves the spectrum of operators in general.}
A bijective spectrum-preserving map of  C*-algebras that are $c_0$-direct sums of matrix algebras is  a~Jordan map~\cite[]{aupetit3} (Theorem 3.7). By Proposition  \ref{prop:Hopf.Jordan.maps.preserve.antipodes}, we then furthermore have that $f$ intertwines antipodes.
This implies (by, for example,  \cite[]{kahng} (Lemma 3.3)) that the pullback $f^*$ preserves the C*-involution. By Lemma \ref{lem:jordan.auto}, the pullback map $f^*$ is now either multiplicative or anti-multiplicative. We~thus have, by duality, that $f$ is either an isomorphism or an anti-isomorphism of bi-algebras.
 It follows from uniqueness of the Hopf algebra antipode(s) that $f$ is a Hopf algebra (co-anti)isomorphism.
 \end{proof}
If, in the conclusion of the above Theorem, the given map $f$ is in fact a  co-anti-isomorphism, it~then follows that it reverses the product $\boxproduct.$
By hypothesis, $f$ intertwines the products $\boxproduct_A$ and $\boxproduct_B.$ Therefore, either
f cannot be a co-anti-isomorphism or the Cuntz semirings must be commutative.
We~thus have a remark:
\begin{Remark}If, in the above Theorem, the Cuntz semirings are not commutative, then in fact we obtain a Hopf algebra isomorphism of $A$ and $B.$\end{Remark}
We also mention that the above Theorem can be interpreted as an obstacle to deformation of the~co-product.
\begin{Remark} Suppose that on a given unital C*-algebra, we have a family of co-products, parametrized by $\hbar,$ and suppose that it happens that the Cuntz semiring is invariant with respect to $\hbar.$ Suppose  that the co-unit and tracial Haar state are also invariant with respect to $\hbar.$ By the above Theorem, all of these C*-algebraic quantum groups are bi-isomorphic.\end{Remark}
The above remark thus says that within certain classes of C*-algebraic quantum groups, algebras that are nearby in an appropriate sense are bi-isomorphic. A slightly similar result, for the case of group algebras and using a different notion of closeness, is in \cite{NgCloseness}.

\section{Isomorphism Results and Remarks on K-Theory}\label{Section5}
It is interesting to be able to lift isomorphisms of Cuntz semirings to isomorphisms of C*-algebraic quantum groups. In order to deduce such results from the previous section, we need to recall some of the conclusions of the classification program for C*-algebras.
In some cases, Cuntz semigroup maps can be lifted to algebra maps. A class of C*-algebras  within which this is true will be called a~\textit{classifiable class.} An example of a class of  nonsimple C*-algebras that is classifiable by Cuntz semigroups is the class of separable C*-algebras that are inductive limits of continuous-trace C*-algebras with one-dimensional tree spectrum given in  \cite{CESclassify}. We call this the CES 
class.
 By applying Theorem \ref{th:banacheweski.Hopf} to the C*-algebraic map provided by \cite{CESclassify} we have the following Corollary:
\begin{Corollary} Let $A$ and $B$ be compact-type C*-algebraic quantum groups, belonging to the CES class, or more generally some classifiable class, and having  tracial Haar states. Let $f\colon \Cu(A)\longrightarrow \Cu(B)$ be
an isomorphism of Cuntz semigroups that intertwines the dimension functions coming from the Haar states and the co-units. We suppose that the induced map on Cuntz semigroups  intertwines the  products $\boxproduct_A$ and $\boxproduct_B$. Then, $A$ and $B$ are isomorphic or co-anti-isomorphic as Hopf algebras.\label{cor:Cuntz.banacheweski.Hopf}
\end{Corollary}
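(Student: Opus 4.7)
The plan is to reduce this to Theorem \ref{th:banacheweski.Hopf} by lifting the given Cuntz semigroup isomorphism $f\colon \Cu(A)\arrow \Cu(B)$ to a C*-algebraic isomorphism $\phi\colon A\arrow B$ with $\Cu(\phi)=f$, and then verifying that $\phi$ satisfies the hypotheses of that theorem. The existence of such a lift is exactly what it means for $A$ and $B$ to sit in a classifiable class: in the CES case, the classification result in \cite{CESclassify} asserts that an isomorphism at the level of $\Cu$ (together with whatever auxiliary data that classification uses, e.g.\ paired traces or dimension functions) is induced by a C*-isomorphism, unique up to approximate unitary equivalence.

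Once $\phi$ has been produced, the remaining task is to show it inherits the structural intertwinings carried by $f$. First I would verify that $\phi$ intertwines the Haar states: the tracial Haar state $\haar_A$ determines, and is determined by, the dimension function it induces on the positive cone of $\Cu(A)$ (using Lemma \ref{lem:extending.traces} to pass to the open projection picture, where $\widetilde{\haar_A}([a])=\haar_A(a)$ on positive elements). Since $f$ intertwines these dimension functions and $\Cu(\phi)=f$, we get $\haar_B\compose\phi=\haar_A$ on the dense set of positive elements, hence everywhere. The co-unit intertwining follows by the same argument applied to the dimension function induced by the co-unit $\counit_A$, which extends to a normal state on $\Add$ and defines a state on $\Cu(A)$ in the open projection picture. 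Finally, the hypothesis that $f$ intertwines the products $\boxproduct_A$ and $\boxproduct_B$ means precisely that $\phi$ is $\fancyK$-co-multiplicative in the sense of the previous section.

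With all three intertwinings in hand---Haar states, co-units, and the $\boxproduct$ products---Theorem \ref{th:banacheweski.Hopf} applies directly to $\phi$ and yields the desired Hopf algebra isomorphism or co-anti-isomorphism. The main obstacle, which is really the whole content of the proof, is the appeal to classifiability: one needs the classifying functor to be fine enough that the extra structure on $\Cu(A)$ coming from the Haar state and co-unit can be genuinely realized by a C*-algebraic map and not merely at the semigroup level. For the CES class this is automatic, since the classification invariant already incorporates the relevant traces; for a more general classifiable class one has to check that the auxiliary data in that class subsumes the data we are tracking. Everything else in the argument is a matter of transporting information between the algebraic, Hilbert module, and open projection pictures developed in Sections \ref{Section3} and \ref{Section4}.
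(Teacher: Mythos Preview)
Your approach is exactly the paper's: lift the Cuntz semigroup isomorphism to a C*-isomorphism via the classification theorem for the CES (or other classifiable) class, then feed that C*-isomorphism into Theorem~\ref{th:banacheweski.Hopf}. One small correction: the parenthetical identity $\widetilde{\haar_A}([a])=\haar_A(a)$ is not literally true (the dimension function is $\lim_n \haar_A(a^{1/n})$, not $\haar_A(a)$), but your actual claim---that the trace is determined by, and determines, its associated dimension function---is correct and is all you need.
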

 We mention that the above result applies, for example, to a subclass of AT 
 algebras. AT algebras are  inductive limits of direct sums of matrix algebras over the group $S^1.$ 
The above result can be viewed as related in spirit to  Wendel's classic theorem \cite{wendel1952} on lifting group algebra isomorphisms to isomorphisms of groups: we have replaced the group algebra by a much more abstract object. However, perhaps the real significance of results such as the above is that they tell us that in many cases a compact-type Kac C*-algebraic quantum group is completely determined by a certain (semi)ring that is much smaller than the quantum group, together with a knowlege of the C*-algebraic class that the quantum group belongs to. The point is that a much smaller object summarizes most of the information in the quantum group.

Since the K-theory group is generally a smaller object than the Cuntz semigroup, we now briefly consider  K-theory. In our earlier results \cite{kuc.autos,kuce.classify.Hopf.by.Ktheory0,kuce.classify.Hopf.by.Ktheory}, the idea was to lift maps from the K-theory ring of a~finite or discrete C*-algebraic quantum group to bialgebra (co-anti)isomorphisms. For this to work, the~K-theory should form a ring
 with respect to a convolution operation, as in our Proposition \ref{prop:box.and.convolve}.

The methods we used previously depended on discreteness. Thus, the difficulty with a sweeping generalization of our earlier results is:  it is not clear that the K-theory group of a compact-type C*-algebraic quantum group does form a (semi)ring with respect to a convolution product operation.                                   {(Note that, in the algebraic case, the restriction of rings operation does not respect projective modules, except in special cases. Thus, the algebraic product module $\coproduct{*} (M_1 \otimes M_2), $ need not be a projective module even if $M_1$ and $M_2$ are projective.)} 
We address this issue next.

Clearly,  we can consider the equivalence classes of the projections within the Cuntz semigroup. Moreover,  in the stably finite case, equivalence of projections in the Cuntz semigroup is the same as ordinary equivalence of projections, in matrix algebras over $A$ (see, for example,  \cite[]{KirRor2000} (p. 641)). Let us now make the slightly stronger assumption of stable rank 1.

Recall that, in the unital case, the K-theory group of $A$ can be defined as the enveloping group of the semigroup of equivalence classes of projections  $V(A)$ of $A$. This semigroup $V(A)$ is, in the unital case, generated by the projections of $A,$ and in the stable rank 1 case there is an injection $\imath$ of $V(A)$ into~$W(A).$

Elements of a Cuntz semigroup that are equivalent to a projection are called \textit{projection-class} elements, and elements that are not equivalent to a projection are called \textit{purely positive} elements.
\begin{Proposition}In the tracial and stable rank 1 case, the Cuntz semiring product of two elements is purely positive if and only if one of the elements is purely positive.\end{Proposition}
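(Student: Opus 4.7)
The plan is to pass to the operator picture via Proposition~\ref{prop:box.and.convolve}, which in the tracial case represents $[a]\boxproduct[b]$ by the positive ``compact'' operator $V(a\tensor b)V\inv \in A\tensor\compact$. Since $V$ extends to a unitary on $\H\tensor\H$, conjugation by $V$ preserves the spectrum, and one therefore computes $\sigma(V(a\tensor b)V\inv) = \sigma(a\tensor b) = \sigma(a)\cdot\sigma(b)$ in any faithful representation. Thus the question of projection-class-ness of the product reduces to a statement about the spectrum $\sigma(a)\cdot\sigma(b)$.

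The key intermediate fact is that in the stable rank $1$ case, a positive element $x \in A\tensor\compact$ gives a projection-class element of $\Cu(A)$ if and only if $x=0$ or $0$ is isolated in $\sigma(x)$. The implication from isolated spectrum is immediate functional calculus: $p:=\chi_{[\epsilon,\infty)}(x) \in C^*(x)$ is a projection with $[p]=[x]$. For the converse I would use the identification (valid under stable rank $1$) of projection-class elements with the \emph{compact}, or way-below-itself, elements of $\Cu(A)$, together with the fact that $[x]$ is way-below-itself if and only if $[(x-\epsilon)_+] = [x]$ for some $\epsilon>0$; via Proposition~\ref{prop.completions} and a hereditary-subalgebra argument this forces $\sigma(x)\cap (0,\epsilon)=\emptyset$.

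Granted this spectral characterization, the conclusion follows from an elementary argument on products of positive reals. Assume first that both $[a]$ and $[b]$ are nonzero. If both are projection-class, then $\sigma(a) \subseteq \{0\}\cup [\epsilon_1,\infty)$ and $\sigma(b) \subseteq \{0\}\cup [\epsilon_2,\infty)$, whence $\sigma(a)\sigma(b) \subseteq \{0\}\cup [\epsilon_1\epsilon_2,\infty)$ and $[a]\boxproduct[b]$ is projection-class. Conversely, if $[a]$ is purely positive, choose $\alpha_n \in \sigma(a)\setminus\{0\}$ with $\alpha_n \to 0$ and any $\beta \in \sigma(b)\setminus\{0\}$; then $\alpha_n \beta \in \sigma(a)\sigma(b)\setminus\{0\}$ tends to $0$, so $0$ is not isolated, and $[a]\boxproduct[b]$ is purely positive. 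The statement is to be read with the implicit assumption that both factors are nonzero, since otherwise the product is $0$ (projection-class) irrespective of the other factor.

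The main obstacle will be the converse in the spectral characterization: if $[x]$ is Cuntz equivalent to a projection it is not \emph{a priori} clear why $0$ must be isolated in $\sigma(x)$, since the equivalence might in principle exploit Hilbert module manoeuvres in the stabilization $A\tensor\compact$. Stable rank $1$ is precisely what closes this gap, via Proposition~\ref{prop.completions}, which identifies Cuntz equivalence with Hilbert module isomorphism and thereby ties the spectral behavior of a generator to the Cuntz class of $[x]$.
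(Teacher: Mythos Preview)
Your approach is essentially the same as the paper's: both pass to the operator picture via Proposition~\ref{prop:box.and.convolve}, use that $V$ extends to a unitary so that $\sigma(V(a\tensor b)V\inv)=\sigma(a)\cdot\sigma(b)$, and then invoke the spectral characterization of projection-class elements in stable rank~1. The only substantive difference is that the paper simply cites Perera \cite[Proposition~3.12]{perera.spectrum} for the spectral characterization (projection-class $\Leftrightarrow$ $0$ is isolated in, or absent from, the spectrum), whereas you sketch a proof via compact containment; your explicit treatment of the zero-element edge case is also a small improvement over the paper's somewhat implicit handling of it.
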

\begin{proof}In the stable rank 1 case, a positive operator is projection class if and only if either 0 is an isolated point of its spectrum, or 0 is not an~element of its spectrum \cite[]{perera.spectrum} (Proposition 3.12). Thus,~an~element is purely positive if and only if zero is a point of accumulation in the  spectrum of some (any) operator representing it. The spectrum of an operator remains unchanged in a faithful representation (of~a~C*-algebra that it belongs to). Therefore, we can regard the operator $ V(a\tensor b)V\inv$ given by Proposition~\ref{prop:box.and.convolve} as an element of $\BHH$ without altering its spectrum. However, then, $V$ extends to a~unitary, and the spectrum of the operator is equal to the spectrum of $a \tensor b$. The spectrum of $a\otimes b$ is given by the set of all pairwise products $\{\lambda\mu\, |\, \lambda\in \mbox{Sp}(a),  \mu\in \mbox{Sp}(b) \}.$ Since the spectrum of $b$ is never empty, it follows that if the spectrum of $a$ accumulates at zero, so does the spectrum of $a\tensor b$. Similarly, if the spectrum of $b$ accumulates at zero, so does the spectrum of $a\tensor b$.  The result follows.\end{proof}

 Thus, the product $\boxproduct$ gives a product on  the (image of) the semigroup of equivalence classes of projections, $\imath(V(A)).$ This gives a product on the semigroup $V(A),$
but products on semigroups do not always pass to products on the enveloping group:  the precise condition that is needed for products to extend in general is that the semigroup must be cancellative \cite{Dale}, and C*-algebras having the property that their semigroup of projections is cancellative are said to have cancellation. It is known that stable rank one implies cancellation  \cite[]{blackadar} (Proposition 6.5.1). It then follows  that:
\begin{Corollary} If a unital and separable C*-algebraic quantum group $A$ has a faithful tracial Haar state and has stable rank 1, then the product $\boxproduct$ gives a product on  $K_0 (A)$. \label{cor:KtheoryProducts}
\end{Corollary}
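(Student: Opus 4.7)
The plan is to leverage the preceding proposition together with a Grothendieck construction. First, I would invoke the preceding proposition in contrapositive form: if neither factor is purely positive, then the Cuntz semiring product is not purely positive either. In other words, the restriction of $\boxproduct$ to projection-class elements of $\Cu(A)$ takes values in projection-class elements. Since stable rank 1 is in force, the injection $\imath\colon V(A)\hookrightarrow W(A)$ recorded in the text identifies $V(A)$ with precisely these projection-class elements of $W(A)$, so $\boxproduct$ descends to a well-defined binary operation on $V(A)$ itself.

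Second, I would verify that this operation gives $V(A)$ the structure of a semiring over the additive monoid $(V(A),+)$. Associativity of $\boxproduct$ on $\Cu(A)$, established in Corollary \ref{cor:extend.product} and extended to the higher rank case in Corollary \ref{cor:existsProduct}, restricts to associativity on $V(A)$; the explicit distributivity observed in Remark \ref{rem:distr} and Corollary \ref{cor:existsProduct} guarantees that $\boxproduct$ distributes over addition in $V(A)$. No new analytic content is required here; the point is simply that the restriction inherits the algebraic identities already established on $\Cu(A)$.

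Third, I would pass from the semiring $V(A)$ to the Grothendieck group $K_0(A)$. To extend a semigroup-valued product to its enveloping group bilinearly, one needs the underlying additive semigroup to be cancellative, so that the canonical homomorphism $V(A)\to K_0(A)$ is injective and formal differences are well defined. Here, stable rank 1 implies cancellation, via the result of Blackadar cited just before the statement. Once injectivity is in hand, one defines $([p]-[p'])\boxproduct([q]-[q']):=[p\boxproduct q]+[p'\boxproduct q']-[p\boxproduct q']-[p'\boxproduct q]$ and checks well-definedness using distributivity plus cancellation.

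The only step that is more than bookkeeping is the last one, and it is precisely there that the stable rank 1 hypothesis does essential work: without cancellation, a product on $V(A)$ need not descend to the enveloping group at all. Stable rank 1 is in fact used twice, first to identify $V(A)$ with the projection-class part of $W(A)$ so that the preceding proposition applies, and then to secure cancellation so that the product extends to $K_0(A)$.
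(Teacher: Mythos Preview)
Your proposal is correct and follows essentially the same route as the paper: use the preceding proposition in contrapositive form to see that $\boxproduct$ preserves projection-class elements, identify $V(A)$ with these via the injection $\imath$ available under stable rank one, and then invoke cancellation (again from stable rank one, via Blackadar) to pass the product to the Grothendieck group $K_0(A)$. Your write-up is somewhat more explicit about associativity, distributivity, and the formula on formal differences, but the argument is the same.
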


We now give an example of computing the product on K-theory in terms of the product on the projection monoid:
\begin{Example}  Recall that elements of $K_0$ are by definition formal differences of projections. The canonical product on K-theory is
$([p_1]-[q_1])\boxproduct([p_2]-[q_2]):=[p_1\boxproduct p_2]+[q_1\boxproduct q_2]-[p_1\boxproduct q_2]-[q_1\boxproduct p_2].$

It is known that we can use the equivalence relations on K-theory to write any element in the form $[p]-[I_n]$ where $I_n$ is the identity element of $M_n(A)$.  The decomposition is unique, up to equivalence of $p$, if we take $n$ to be minimal. Recall that if $\Id$ is the multiplicative identity of $A,$ then
$[\Id]\boxproduct [a] $ is equivalent to $\Id$ for all $a$~(\textit{c.f.}~Remark \ref{rem:multiplication.table}.)  Now the above product simplifies to
\begin{equation}([p_1]-[I_n])\boxproduct([p_2]-[I_m])=[p_1\boxproduct p_2]-[I_k].
\label{eq:naive.product}\end{equation}\end{Example}
We thus see that that the canonical product on K-theory coincides, in this case, with the ``naive'' product defined by the right hand side of Equation \eqref{eq:naive.product}.

Choosing a K-theoretically classifiable class of C*-algebras, we  expect to be able to lift K-theory ring maps to bi-algebra (anti)isomorphisms. The class of nonsimple Approximately Interval algebras, usually called AI algebras, 
satisfying a mild condition called the ideal property, are classifiable by K-theory and traces, see for example  \cite{JJ,Stevens}, and non-simple AI algebras that are of real rank zero are classified, in  \cite{ElliotAI}, by their $K_0$ and $K_1$ groups. The rather large class of approximately subhomogeneous (ASH) real rank zero C*-algebras is classified by their K-theory groups in  \cite{DadarlatGong}.

We mention, for example, a variant of Theorem \ref{th:banacheweski.Hopf}.
\begin{Theorem} Let $A$ and $B$ be compact-type C*-algebraic quantum groups with  tracial Haar states. Assume that $A$ and $B$ have stable rank one and real rank zero at the C*-algebra level. Let $f\colon A\longrightarrow B$ be
a C*-isomorphism that intertwines Haar states and co-units. We suppose that the induced map on the $K_0$-groups  intertwines the  products $\boxproduct_A$ and $\boxproduct_B$. Then, $A$ and $B$ are isomorphic or co-anti-isomorphic as C*-algebraic quantum groups.\label{th:banacheweski.Hopf.sr1}
\end{Theorem}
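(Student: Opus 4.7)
The plan is to reduce to Theorem \ref{th:banacheweski.Hopf} by showing that the $K_0$-intertwining hypothesis, combined with stable rank one and real rank zero, forces the identity on open projections
\[
(T^{**}\tensor t^{**})\bigl(f(p)\boxproduct f(q)\bigr)=(T^{**}\tensor t^{**})\bigl(f(p\boxproduct q)\bigr)
\]
that drove the proof of Lemma \ref{lem:cotraces}. The two additional hypotheses enter precisely here: stable rank one delivers cancellation, so that equality of $K_0$-classes of projections in $A\tensor\compact$ is Murray--von Neumann equivalence and therefore gives equal values under every normal tracial weight; while real rank zero (which transfers from $A$ to $A\tensor\compact$) ensures that every open projection of $\Add$ is the supremum, in the strong topology, of an increasing sequence of projections drawn from $A\tensor\compact$.

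First, for projections $p,q\in A\tensor\compact$, the operator $V(p\tensor q)V\inv$ is itself a projection in $A\tensor\compact$ (since $V$ extends to a unitary on $\H\tensor\H$), and by Proposition \ref{prop:box.and.convolve} it represents $[p]\boxproduct[q]$ in the operator picture. The $K_0$-intertwining hypothesis therefore yields the displayed identity for projections $p,q\in A\tensor\compact$; by Corollary \ref{cor:box.and.convolve2}, this is equivalent to $T^{**}(f(p)\convolve f(q))=T^{**}(f(p\convolve q))$. Approximating open projections of $\Add$ from below by increasing sequences of such projections, and invoking Proposition \ref{prop:limits.of.box.and.convolve} (continuity of $\convolve$ on increasing bounded strongly convergent nets) together with normality of $T^{**}$ and of the double-dual extension of $f$, this identity extends to arbitrary open projections of $\Add$.

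With the identity in hand, the proof of Lemma \ref{lem:cotraces} applies verbatim, producing $\haarh(f^{*-1}(y_1y_2))=\haarh(f^{*-1}(y_1)f^{*-1}(y_2))$ for all compactly supported $y_i\in\Ah$. The argument of Theorem \ref{th:banacheweski.Hopf} then runs unchanged: iteration gives trace preservation of every power of $f^{*-1}(y)$, hence spectrum preservation; Aupetit's theorem identifies $f^{*-1}$ as a Jordan map; Proposition \ref{prop:Hopf.Jordan.maps.preserve.antipodes} forces $f$ to intertwine antipodes; and Lemma \ref{lem:jordan.auto} upgrades $f^*$ to either a multiplicative or an anti-multiplicative map. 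The conclusion is a Hopf algebra isomorphism or co-anti-isomorphism between $A$ and $B$. The main obstacle is the strong-limit argument promoting the key identity from projections to open projections: one must verify that $f^{**}(V(p_n\tensor q_n)V\inv)$ and $V(f^{**}(p_n)\tensor f^{**}(q_n))V\inv$ both converge strongly to their open-projection analogues along approximating sequences $p_n\nearrow p$, $q_n\nearrow q$, and that the slice map $\Id\tensor t^{**}$ commutes with these limits so that Proposition \ref{prop:limits.of.box.and.convolve} can be applied. This is a normality and joint-continuity-on-bounded-sets check, but it is the one step where the extra hypotheses of real rank zero and stable rank one are genuinely used, and bookkeeping of the relevant ambient double duals requires care.
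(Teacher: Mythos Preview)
Your approach is correct in outline, but it takes a detour that the paper deliberately avoids. The paper remarks that the proof of Theorem \ref{th:banacheweski.Hopf.sr1} is \emph{simpler} than that of Theorem \ref{th:banacheweski.Hopf}, precisely because the real rank zero hypothesis allows one to work with linear combinations of genuine projections from the algebra and ordinary K-theory states on them, \emph{rather than} open projections from the double dual. In other words, the paper's route is: establish the trace identity $T(f(p)\convolve f(q))=T(f(p\convolve q))$ for projections $p,q$ in (matrix algebras over) $A$ using the $K_0$-hypothesis and cancellation, extend bilinearly to finite real-linear combinations of projections, and then use real rank zero to pass by norm-density to arbitrary positive elements of $A$. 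From there the Fourier-transform argument of Lemma \ref{lem:cotraces} and Theorem \ref{th:banacheweski.Hopf} runs as before. No passage to $\Add$, no normal extensions, no Proposition \ref{prop:limits.of.box.and.convolve}.

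Your proof instead climbs from projections up to open projections (using real rank zero to approximate), and then---following Lemma \ref{lem:cotraces} verbatim---climbs back down from open projections to positive elements of $A$ via the spectral theorem. That round trip is unnecessary: the key bilinear identity already holds on a norm-dense set of self-adjoint elements once you have it on projections, so you can go straight to positive $x,y\in A$. Your approach is not wrong, and the normality/joint-continuity bookkeeping you flag can indeed be carried out, but it introduces exactly the layer of double-dual machinery that the extra hypotheses were meant to eliminate. The paper's argument is shorter and stays entirely inside the C*-algebra.
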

The proof is similar to the proof of Theorem \ref{th:banacheweski.Hopf}, but simpler, because the real rank condition allows us to work with linear combinations of projection elements from the algebra and K-theory states on~them, rather than open projections  from the double dual.

In order to replace C*-isomorphisms by K-theory isomorphisms in Theorem \ref{th:banacheweski.Hopf.sr1}, we must choose a~class of C*-algebra that is classifiable by K-theory. We can, for example,  choose ASH algebras:   let $K_*$ denote the direct sum of K-theory groups used in  \cite{DadarlatGong},  with a  product on $K_0$  as defined by Equation \eqref{eq:naive.product}. The K-theory state associated with the tracial Haar state is the map induced on $K_0$ by the Haar state.

\begin{Theorem}Let $A$ and $B$ be compact-type C*-algebraic quantum groups with  faithful tracial Haar states. Assume that $A$ and $B$ are slow dimension growth ASH algebras with real rank zero  at the C*-algebra level. Let $f\colon K_{*}(A) \longrightarrow K_{*}(B)$ be
an isomorphism that intertwines the K-theory state associated with the tracial Haar states. We suppose that the map on the $K_0$-groups  intertwines the  products $\boxproduct_A$ and $\boxproduct_B$. Then, $A$ and $B$ are isomorphic or co-anti-isomorphic as C*-algebraic quantum groups.\label{th:banacheweski.Hopf.ASH}  \end{Theorem}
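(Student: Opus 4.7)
The plan is to reduce to Theorem \ref{th:banacheweski.Hopf.sr1}, whose hypotheses demand a C*-isomorphism of algebras rather than an isomorphism of K-theory groups, by lifting the given K-theoretic isomorphism $f$ to a C*-isomorphism. This is possible because the classifying functor of Dadarlat--Gong \cite{DadarlatGong} for slow dimension growth ASH algebras of real rank zero is essentially $K_{*}$ together with its natural positive cone and its pairings with traces. Moreover, slow dimension growth ASH with real rank zero is known to imply stable rank one, which is the other C*-algebraic hypothesis of Theorem \ref{th:banacheweski.Hopf.sr1}.

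First I would invoke the Dadarlat--Gong classification to produce a C*-isomorphism $\tilde f\colon A \arrow B$ whose induced map on $K_{*}$ agrees with $f.$ Because $f$ intertwines the K-theory state induced by the Haar states, and because at real rank zero a trace is determined by its values on (equivalence classes of) projections, one can arrange for $\tilde f$ itself to intertwine the tracial Haar states at the C*-algebraic level, if necessary by composing with an approximately inner automorphism---an adjustment that does not alter the action on K-theory.

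Next I would verify that $\tilde f$ intertwines the co-units. The unit class $[1_{A}]\in K_{0}(A)$ is distinguished among projection classes by the property $[1_{A}]\boxproduct [a]=[a]$ (cf.\ Remark \ref{rem:multiplication.table}), so any isomorphism of $K_{0}$ that intertwines $\boxproduct_{A}$ with $\boxproduct_{B}$ is unital; in particular $\tilde f$ is unital. The co-unit is a *-character, which at real rank zero is pinned down by its values on the projection lattice together with the tracial pairing, so after a further approximate-inner perturbation the co-units of $A$ and $B$ are intertwined by $\tilde f$ as well.

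Finally, since $\tilde f$ induces the given map $f$ on $K_{0},$ it automatically intertwines the products $\boxproduct_{A}$ and $\boxproduct_{B}.$ Theorem \ref{th:banacheweski.Hopf.sr1} then applies to $\tilde f$ and produces the desired isomorphism or co-anti-isomorphism of Hopf C*-algebraic quantum groups. The main obstacle will be the simultaneous alignment of the Haar state and the co-unit along a single lift $\tilde f$: the Dadarlat--Gong lift is only canonical up to approximately inner perturbation, and selecting a single $\tilde f$ that respects both a prescribed trace and a prescribed character---neither of which is intrinsically K-theoretic in a naive sense---is the delicate step, requiring a careful combination of the uniqueness clauses in the classification theorem with the characterization of characters via minimal open projections that is available in the real rank zero setting.
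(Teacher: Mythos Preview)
Your overall strategy---lift the $K_*$-isomorphism to a C*-isomorphism via the Dadarlat--Gong classification, then feed that isomorphism into Theorem~\ref{th:banacheweski.Hopf.sr1}---is exactly what the paper intends; the paper states no more than this and leaves the verification of the auxiliary hypotheses implicit. Your identification of the ``delicate step'' (simultaneously aligning Haar state and co-unit along the lift) is on point.

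There is, however, a concrete error in your co-unit argument. You assert that $[1_A]\boxproduct[a]=[a]$, making $[1_A]$ a multiplicative unit. Remark~\ref{rem:multiplication.table} says the opposite: $[\Id]\boxproduct[a]$ is Cuntz equivalent to $[\Id]$ for every positive $a$, so $[1_A]$ is \emph{absorbing}, not neutral. (The unit for $\boxproduct$, when it exists, is the class of the support projection of the co-unit, not of the algebra identity.) Consequently your deduction that a $\boxproduct$-intertwining map is automatically unital on $K_0$ collapses, and with it your route to the co-unit. Unitality of the lift should instead come from the classification machinery itself (the $K_*$-isomorphism is an order isomorphism carrying the order unit to the order unit), and the co-unit alignment has to be argued separately---for instance by observing that the co-unit, being a character, corresponds to a minimal central open projection whose $K_0$-class is singled out as the $\boxproduct$-unit when one exists, or via the tracial data.

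One smaller divergence: you invoke stable rank one as a consequence of slow dimension growth plus real rank zero in order to land in the hypotheses of Theorem~\ref{th:banacheweski.Hopf.sr1}. The paper instead observes (immediately after the theorem) that slow dimension growth gives cancellation of projections directly, via \cite[Propositions 3.9 and 2.3]{DadarlatGong}, and that cancellation is all that is actually used. This is a cleaner route and avoids an unnecessary appeal to stable rank one.
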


The condition of stable rank 1 has been dropped in the above because slow dimension growth implies \cite[]{DadarlatGong} (Propositions 3.9 and 2.3) cancellation of projections, which is sufficient.
We note that the above result applies to a subclass of AT algebras.

\subsection*{Acknowledgements} {I thank the referee for comments that improved this manuscript and for shortening the proof of  Lemma \ref{lem:special.module}. I thank my teachers and mentors, George Elliott, Gert Pedersen, and Georges Skandalis. {Also thanks to NSERC for financial support.}



\end{document}